\theoremstyle{plain}
\newtheorem{thm}{Theorem}[section]
\newtheorem{cor}[thm]{Corollary}
\newtheorem{pro}[thm]{Proposition}
\newtheorem{lem}[thm]{Lemma}
\newtheorem{rem}[thm]{Remark}
\newtheorem*{ex*}{Example}
\numberwithin{equation}{section}
\newcommand{\B}{\mathbb{B}}
\newcommand{\N}{\mathbb{N}}
\newcommand{\R}{\mathbb{R}_{+}^{d}}
\def\a{\alpha}
\def\z{\zeta}
\def\ll{\lambda}
\def\t{\theta}
\def\v{\varphi}
\def\b{(\z,q_{\pm})}
\def\c{(\z,q_{\pm}(\t,y,s))}
\def\bbb{(\z,q_{\pm}(x,y,s))}
\def\ccc{(\z,q_{\pm}(x',y,s))}
\def\ddd{(\z,q_{\pm}(x,y',s))}
\DeclareMathOperator{\e}{{\mathbb{E}\textrm{xp}}}
\DeclareMathOperator{\lo}{{\mathbb{L}\textrm{og}}}
\DeclareMathOperator{\domain}{Dom}
\DeclareMathOperator{\supp}{supp}
\begin{document}

\title[g-functions based on Laguerre semigroups]{Littlewood-Paley-Stein type square functions based on Laguerre semigroups}
\author[]{Tomasz Szarek}
\address{Tomasz Szarek,     \newline
      ul{.} W{.} Rutkiewicz 29\slash 43,
      PL-50--571 Wroc\l{}aw, Poland        \vspace{10pt}}

\email{szarektomaszz@gmail.com}

\begin{abstract}
We investigate $g$-functions based on semigroups related to multi-dimensional Laguerre function expansions of convolution type. We prove that these operators can be viewed as Calder\'on-Zygmund operators in the sense of the underlying space of homogeneous type, hence their mapping properties follow from the general theory.
\end{abstract}

\maketitle

\section{Introduction}
\setcounter{equation}{0}
Square functions are one of the most fundamental concepts in harmonic analysis. Their study began in the twenties of the last century and since that time they were investigated by many authors in different contexts and in variety of forms, see \cite{Ste2} for a historical survey and also for a partial account of more recent developments. From the present perspective, square functions are important tools having several significant applications, for instance in the study of $H^p$ spaces, non-tangential convergence of Fatou type and the boundedness of Riesz transforms and multipliers; see \cite{Ste,Ste3,Ste4}. The aim of this paper is to study $g$-functions related to multi-dimensional expansions into Laguerre functions of convolution type. Our motivation comes not only from the fact that these objects are interesting on their own, but also from the possibility of their potential applications in further research concerning Laguerre expansions. The results we prove fit into the line of investigations conducted in the recent years and treating Littlewood-Paley theory for discrete and continuous orthogonal expansions, see for example \cite{BFS,BHNV,BMT,HRST,NoSj,StTo2} and also references therein. Some earlier results concerning $g$-functions in various Laguerre contexts, but different from ours, can be found in \cite{BFRST,BMR,GIT,No2,T}, among others. Certain one-dimensional results related to our setting are contained in \cite[Section 2]{StTr}.

We shall work on the space $\R=(0,\infty)^{d}$, $d\geq 1$, equipped with the measure
$$
d\mu_{\a}(x)=x_{1}^{2\a_{1}+1}\cdot\ldots\cdot x_{d}^{2\a_{d}+1}\,dx,\qquad x=(x_{1},\ldots,x_{d})\in\R,
$$
and with the Euclidean norm $|\cdot|$. The multi-index $\a=(\a_{1},\ldots,\a_{d})$ will always be assumed to belong to $(-1,\infty)^{d}$. Since $\mu_{\a}$ possesses the doubling property, the triple $(\R,d\mu_{\a},|\cdot|)$ forms the space of homogeneous type in the sense of Coifman and Weiss \cite{CW}. The Laguerre differential operator
\begin{align*}
L_{\a}=
-\Delta+|x|^{2}-\sum_{i=1}^{d} \frac{2\a_{i}+1}{x_{i}}\frac{\partial}{\partial x_{i}}.
\end{align*} 
is formally symmetric in $L^2(d\mu_{\a})$ and will play the role of Laplacian in the present setting. Appropriate partial derivatives $\delta_{j}$, $j=1,\ldots,d,$ related to $L_{\a}$ are obtained from the decomposition
\begin{align*}
L_{\a}=2|\a|+2d+\sum_{j=1}^{d}\delta_{j}^{*}\delta_{j},
\end{align*}
where
\begin{align*}
\delta_{j}=
\frac{\partial}{\partial x_{j}}+x_{j},\qquad 
\delta_{j}^{*}
=-\frac{\partial}{\partial x_{j}}+x_{j}-\frac{2\a_{j}+1}{x_{j}},\qquad j=1,\ldots,d,
\end{align*}
$\delta_{j}^{*}$ being the formal adjoint of $\delta_{j}$ in $L^2(d\mu_{\a})$; see \cite{NS1}, or \cite{NS4} for a more general background. It is well known that the system $\{\ell_{k}^{\a}\}$ of Laguerre functions of convolution type (see Section \ref{pre} for the definition) forms an orthonormal and complete in $L^{2}(d\mu_{\a})$ set of eigenfunctions of $L_{\a}$.

We shall consider vertical (involving 'time' derivative $\partial_{t}$) and horizontal (involving 'space' derivatives $\delta_{j}$ or $\delta_{j}^{*}$) $g$-functions based on the heat and Poisson semigroups generated by $L_{\a}$ and on 'modified' versions of these semigroups (such 'modifications' emerge naturally in the associated conjugacy theory and are also of interest, cf. \cite{NS4,NS1}); see Section \ref{pre} for a complete list. Our main result, Theorem \ref{gfun} below, says that under the slight restriction $\a\in[-1\slash 2,\infty)^{d}$ all the $g$-functions can be viewed as vector-valued Calder\'on-Zygmund operators in the sense of the space of homogeneous type $(\R,d\mu_{\a},|\cdot|)$. Consequences of this, including $L^{p}$ mapping properties, are then delivered by the general theory. The main difficulty connected with the Calder\'on-Zygmund approach is to show relevant kernel estimates. Here we use a convenient technique having roots in Sasso's paper \cite{Sa} and developed by Nowak and Stempak in \cite{NS1}. This method is based on Schl\"afli's formula that allows to handle modified Bessel functions entering integral kernels of the Laguerre semigroups. It is remarkable that the same technique is also well suited to $g$-functions of higher order and, if necessary, can be used to prove that they can be viewed as vector-valued Calder\'on-Zygmund operators. The related analysis, however, is lengthy and rather sophisticated, and thus beyond the scope of this article.

The paper is organized as follows. Section \ref{pre} contains the setup, definitions of $g$-functions, statements of the main results and the accompanying comments and remarks. In Section \ref{o+s} the $g$-functions are proved to be $L^2$ bounded and associated, in the Calder\'on-Zygmund theory sense, with the relevant kernels. Finally, Section \ref{ker} is devoted to the proofs of all necessary kernel estimates. This is the largest and most technical part of this work.

Throughout the paper we use a standard notation with essentially all symbols referring to the homogeneous space $(\R,d\mu_{\a},|\cdot|)$. Thus $\Delta$ and $\nabla$ denote the Laplacian and gradient, respectively, restricted to $\R$. Further, $L^{p}(wd\mu_{\a})$ stands for the weighted $L^{p}(d\mu_{\a})$ space, $w$ being a nonnegative weight on $\R$; we write simply $L^{p}(d\mu_{\a})$ if $w\equiv 1$. By $\langle f,g\rangle_{d\mu_{\a}}$ we mean $\int_{\R}f(x)\overline{g(x)}\,d\mu_{\a}(x)$ whenever the integral makes sense. For  $1\leq p<\infty$ we denote by $A_{p}^{\a}$ the Muckenhoupt class of $A_{p}$ weights associated to the space $(\R,d\mu_{\a},|\cdot|)$ (see \cite[p.\,645]{NS1} for a precise description). While writing estimates we will frequently use the notation $X\lesssim Y$ to indicate that $X\leq CY$ with a positive constant $C$ independent of significant quantities. We shall write $X\simeq Y$ when $X\lesssim Y$ and $Y\lesssim X$.

\textbf{Acknowledgments.} The author would like to thank Dr. Adam Nowak for suggesting the topic and constant support during preparation of the paper.

\section{Preliminaries and main results}\label{pre}
\setcounter{equation}{0}

Let $k=(k_1,\ldots,k_d) \in \N^d$, $\N=\{0,1,\dots\}$,  and
$\alpha = (\alpha _1, \ldots , \alpha _d) \in (-1,\infty)^{d}$
be multi-indices.
The Laguerre function $\ell^{\alpha}_k$ is defined on $\R$ as the tensor product
$$
\ell_{k}^{\alpha}(x) = \ell _{k_1}^{\alpha _1}(x_1) \cdot \ldots \cdot
\ell_{k_d}^{\alpha _d}(x_d), \qquad x = (x_1, \ldots ,x_d)\in \R,
$$
where $\ell_{k_i}^{\alpha _i}$ are the one-dimensional Laguerre functions
$$
\ell_{k_i}^{\alpha _i}(x_i) =
\left(\frac{2 \Gamma (k_i+1)}{\Gamma(k_i+\alpha _i +1)}\right)
^{1 \slash 2} L_{k_i}^{\alpha _i} (x_i^{2})e^{-{x_i^2}/{2}}, \quad \quad x_i > 0,
\quad i = 1,\ldots , d;
$$
here $L^\alpha_k$ denotes the Laguerre polynomial 
of degree $k$ and order $\alpha,$ see \cite[p.\,76]{Leb}.
Each $\ell_k^{\alpha}$ is an eigenfunction of the Laguerre operator $L_\alpha$
with the corresponding eigenvalue $4|k|+2|\alpha |+2d$, that is
$$
L_\alpha\ell_k^{\alpha}=(4|k|+2|\alpha |+2d)\ell_k^{\alpha};
$$
by $|\alpha|$ and $|k|$ we denote $|\alpha|=\alpha_1+\ldots+\alpha_d$
(thus $|\alpha|$ may be negative) and the length $|k|=k_1+\ldots+k_d$.
Furthermore, the system $\{ \ell_{k}^{\alpha} : k \in \N^d \}$ is an orthonormal
basis in $L^2(d\mu_{\alpha})$ and consequently each of the 'differentiated' (see \eqref{opdel} below) systems $\{ x_{j}\ell_{k}^{\alpha+e_{j}} : k \in \N^d \}$, $j=1,\ldots,d$, is an orthonormal basis in $L^2(d\mu_{\alpha})$; here $e_{j}$ is the $j$th coordinate vector.

The operator $L_\alpha$, considered initially on $C^{\infty}_c(\R)\subset L^{2}(d\mu_{\a})$, is symmetric and positive. We take into account a natural self-adjoint extension $\mathcal{L}_{\a}$ of $L_{\a}$ (see \cite[p.\,646]{NS1}), whose spectral decomposition is given by
$$
\mathcal{L}_\alpha f=\sum_{n=0}^\infty\ll_{n}^{\a}\,\mathcal{P}_n^\alpha f, \qquad\ll_{n}^{\a}=4n+2|\a|+2d,
$$
on the domain $\domain \mathcal{L}_{\a}$ consisting of all functions $f\in L^{2}(d\mu_{\a})$ for which the defining series converges in $L^{2}(d\mu_{\a})$; here $\mathcal{P}_n^\alpha $ are the spectral projections
\begin{equation*}
\mathcal{P}_n^\alpha f=\sum_{|k|=n}\langle f,\ell_k^{\alpha}
  \rangle_{d\mu_{\alpha}}\ell_k^{\alpha}.
\end{equation*}
The semigroup $T_t^\alpha = \exp(-t\mathcal{L}_{\alpha})$, $t \ge 0$, generated by
$\mathcal{L}_{\alpha}$ is a strongly continuous semigroup of contractions in
$L^2(d\mu_{\alpha})$. By the spectral theorem,
\begin{equation} \label{pc}
T_t^\alpha f=\sum_{n=0}^\infty e^{-t\ll_{n}^{\a}}\mathcal{P}^\alpha_nf, \qquad f\in L^2(d\mu_{\alpha}).
\end{equation}
We have the integral representation 
\begin{equation*} 
  T_t^\alpha f(x)=\int_{\R} G_t^\alpha(x,y)f(y)\,d\mu_{\alpha}(y),
  \qquad x\in \R,
\end{equation*}
where the Laguerre heat kernel is given by
\begin{equation*} 
G^\alpha_t(x,y)=\sum_{n=0}^\infty e^{-t\ll_{n}^{\a}} \sum_{|k|=n}
\ell_k^\alpha(x)\ell_k^\alpha(y).
\end{equation*}
This series can be summed (see \cite[(4.17.6)]{Leb}) and the resulting formula is
$$
G^\alpha_t(x,y)=(\sinh 2t)^{-d}\exp\Big({-\frac{1}{2} \coth(2t)\big(|x|^{2}+|y|^{2}\big)}\Big)
\prod^{d}_{i=1} (x_i y_i)^{-\alpha_i} I_{\alpha_i}\left(\frac{x_i y_i}{\sinh 2t}\right),
$$
with $I_\nu$ denoting the modified Bessel function of the first kind and order $\nu$, cf. \cite[Section 5]{Leb}.

We consider also the operators
$$
\mathcal{M}_j^\alpha
f=\sum_{n=1}^\infty\ll_{n}^{\a}\,\mathcal{P}_n^{\alpha,j} f,\qquad j=1,\ldots,d,
$$
with domains $\domain \mathcal{M}_{j}^{\a}$ consisting of all functions such that the defining series converge in $L^{2}(d\mu_{\a})$; here the spectral projections $\mathcal{P}_n^{\alpha,j}$ are given by
$$
\mathcal{P}_n^{\alpha,j} f=\sum_{|k|=n}\langle f,x_j\ell
_{k-e_j}^{\alpha+e_j}
  \rangle_{d\mu_{\alpha}}\, x_j\ell_{k-e_j}^{\alpha+e_j},
$$ 
where, by convention, $\ell_{k-e_{j}}^{\a+e_{j}}=0$ if $k_{j}-1<0$ (this convention will also be used in the sequel). 
According to \cite[Section 4]{NS1}, $\mathcal{M}_j^\alpha$ are self-adjoint extensions of the differential operators 
$$
M^{\alpha}_j =L_\alpha + \frac{2\alpha_j+1}{x^{2}_j} + 2, \quad \quad
j=1,\ldots, d.
$$
These perturbations of the Laguerre operator emerge naturally in the conjugacy theory for Laguerre expansions, see \cite[Section 5]{NS4} for a general background.
The 'modified' Laguerre semigroups $\widetilde T_{t}^{\a,j} = \exp(-t\mathcal{M}_{j}^{\alpha})$, $t \ge 0$, generated by
$\mathcal{M}_{j}^{\alpha}$ are given on $L^2(d\mu_{\a})$ by
\begin{align}\label{tfal}
\widetilde T_{t}^{\a,j} f=\sum_{n=0}^{\infty}
e^{-t\ll_{n}^{\a}} 
    \mathcal{P}^{\alpha,j}_n f,\qquad j=1,\ldots,d.
\end{align}
The integral representation of $\widetilde{T}^{\alpha,j}_t$ is
$$
\widetilde{T}^{\alpha,j}_t f(x) = \int_{\R}
\widetilde{G}^{\alpha,j}_t(x,y) f(y) \, d\mu_{\alpha}(y), \qquad x \in \R,
    \quad t>0,
$$
with (see \cite[p.\,662]{NS1})
\begin {equation}\label{gfal}
\widetilde{G}^{\alpha,j}_t(x,y)=\sum_{n=0}^\infty e^{-t\ll_{n}^{\a}} \sum_{|k|=n}
x_{j}y_{j}\ell_{k-e_{j}}^{\a+e_{j}}(x)\ell_{k-e_{j}}^{\a+e_{j}}(y)
=
e^{-2t}x_{j}y_{j} G^{\alpha+e_j}_t(x,y).
\end {equation}

The Poisson semigroups $\{P^{\alpha}_t\}_{t\ge 0}$, $\{\widetilde P_{t}^{\a,j}\}_{t\ge 0}$, $j=1,\ldots,d$, associated with $\mathcal{L}_{\alpha}$ and $\mathcal{M}_{j}^{\alpha}$, respectively, are in view of the spectral theorem given by
\begin{align}
P^{\alpha}_t f 
=&
\,e^{-t\sqrt{\mathcal{L}_{\alpha}}} f 
=
\sum_{n=0}^{\infty} e^{-t\sqrt{\ll_{n}^{\a}}}
    \mathcal{P}^{\alpha}_n f, \qquad f \in L^2(d\mu_{\alpha}),\\
\widetilde P^{\alpha,j}_t f 
=&
\,e^{-t\sqrt{\mathcal{M}_{j}^{\alpha}}} f 
=
\sum_{n=0}^{\infty} e^{-t\sqrt{\ll_{n}^{\a}}}
    \mathcal{P}^{\alpha,j}_n f, \qquad f \in L^2(d\mu_{\alpha}),\quad  j=1,\ldots,d.\label{pp}\\\nonumber
\end{align}
An important connection between heat-diffusion and Poisson semigroups is established by the subordination principle,
\begin{align*}
P^{\alpha}_t f (x) =& \frac{1}{\sqrt{\pi}} \int_0^{\infty} \frac{e^{-u}}{\sqrt{u}} T^{\alpha}_{t^2\slash (4u)}
    f(x) \, du, \qquad x \in \R,\\
\widetilde P^{\alpha,j}_t f (x) =& \frac{1}{\sqrt{\pi}} \int_0^{\infty} \frac{e^{-u}}{\sqrt{u}} \widetilde T^{\alpha,j}_{t^2\slash (4u)}
    f(x) \, du, \qquad x \in \R,\quad j=1,\ldots,d.
\end{align*}

We consider the following vertical and horizontal $g$-functions based on the Laguerre heat semigroup and its 'modifications',
\begin{displaymath}
\begin {array}{lll}
g_{V,T}(f)(x)&=
\big\|\partial_{t}T_{t}^{\a}f(x)\big\|_{L^{2}(tdt)},&\\
g_{H,T}^{i}(f)(x)&=
\big\|\delta_{i}T_{t}^{\a}f(x)\big\|_{L^{2}(dt)},\qquad&
i=1,\ldots,d,\\
g_{V,\widetilde T}^{j}(f)(x)&=
\big\|\partial_{t}\widetilde T_{t}^{\a,j}f(x)\big\|_{L^{2}(tdt)}
,\qquad& j=1,\ldots,d,\\
g_{H,\widetilde T}^{j,i}(f)(x)&=
\big\|\delta_{i}\widetilde T_{t}^{\a,j}f(x)\big\|_{L^{2}(dt)}
,\qquad&
i,j=1,\ldots,d,\quad j\ne i,\\
g_{H,\widetilde T}^{j,j}(f)(x)&=
\big\|\delta_{j}^{*}\widetilde T_{t}^{\a,j}f(x)\big\|_{L^{2}(dt)}
,\qquad&
j=1,\ldots,d,\\
\end {array}
\end{displaymath}
and their analogues involving the Poisson semigroups,
\begin{displaymath}
\begin {array}{lll}
g_{V,P}(f)(x)&=
\big\|\partial_{t}P_{t}^{\a}f(x)\big\|_{L^{2}(tdt)},
&\\
g_{H,P}^{i}(f)(x)&=
\big\|\delta_{i}P_{t}^{\a}f(x)\big\|_{L^{2}(tdt)},&
\qquad i=1,\ldots,d,\\
g_{V,\widetilde P}^{j}(f)(x)&=
\big\|\partial_{t}\widetilde P_{t}^{\a,j}f(x)\big\|_{L^{2}(tdt)}
,&
\qquad j=1,\ldots,d,\\
g_{H,\widetilde P}^{j,i}(f)(x)&=
\big\|\delta_{i}\widetilde P_{t}^{\a,j}f(x)\big\|_{L^{2}(tdt)}
,&
\qquad i,j=1,\ldots,d,\quad j\ne i,\\
g_{H,\widetilde P}^{j,j}(f)(x)&=
\big\|\delta_{j}^{*}\widetilde P_{t}^{\a,j}f(x)\big\|_{L^{2}(tdt)}
,&
\qquad j=1,\ldots,d.\\
\end {array}
\end{displaymath}
Clearly, the square functions just listed are nonlinear, but by the well-known trick they can be identified with linear operators. For example $g_{V,T}$ can be viewed as the vector-valued linear operator $f(x)\mapsto \{\partial_{t}T_{t}^{\a}f(x)\}_{t>0}$ which maps into functions of $x$ having values in $L^2(tdt)$.

The main result of the paper reads as follows.
\begin{thm}\label{gfun}
Assume that $\alpha\in [-1\slash 2,\infty)^{d}$. Then each of the square functions listed above, viewed as a vector-valued operator related to either $L^{2}(dt)$ (the cases of $g_{H,T}^{i}$ and $g_{H,\widetilde T}^{j,i}$) or to $L^{2}(tdt)$ (the remaining cases), is a Calder\'on-Zygmund operator in the sense of the space of homogeneous type $(\R,d\mu_{\alpha},|\cdot|)$. 
\end{thm}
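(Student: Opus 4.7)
The plan is to reduce the theorem to three standard ingredients for the Calderón-Zygmund theory on the space of homogeneous type $(\R,d\mu_{\a},|\cdot|)$: (a) boundedness of each vector-valued operator on $L^{2}(d\mu_{\a})$; (b) association with a well-defined off-diagonal vector-valued kernel; (c) the standard size and smoothness estimates for that kernel. Once (a)--(c) are established, the classical theory (in the form suited to spaces of homogeneous type) yields that the operators are Calderón-Zygmund operators.

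For (a), I would use the spectral representations \eqref{pc} and \eqref{tfal}--\eqref{pp} together with the orthonormality of $\{\ell_{k}^{\a}\}$ and of the ``differentiated'' systems $\{x_{j}\ell_{k}^{\a+e_{j}}\}$. For the vertical functions this is the textbook Plancherel computation: expanding $f$ in the appropriate basis and computing, say, $\int_{0}^{\infty}t\,\|\partial_{t}T_{t}^{\a}f\|_{L^{2}(d\mu_{\a})}^{2}\,dt$ yields a series of the form $\sum_{n}|\langle f,\cdot\rangle|^{2}\int_{0}^{\infty}t(\lambda_{n}^{\a})^{2}e^{-2t\lambda_{n}^{\a}}\,dt\lesssim \|f\|^{2}_{L^{2}(d\mu_{\a})}$, and similarly, via subordination, for the Poisson cases. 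For the horizontal functions, one uses the factorization $L_{\a}=2|\a|+2d+\sum_{j}\delta_{j}^{*}\delta_{j}$ (and the analogous decomposition for $M_{j}^{\a}$) to identify $\int_{0}^{\infty}\|\delta_{i}T_{t}^{\a}f\|_{L^{2}(d\mu_{\a})}^{2}\,dt$ with a spectral integral bounded by $\|f\|_{L^{2}(d\mu_{\a})}^{2}$.

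For (b), the natural kernel in each case is obtained by differentiating the explicit integral kernels of $T_{t}^{\a}$, $\widetilde T_{t}^{\a,j}$, $P_{t}^{\a}$, $\widetilde P_{t}^{\a,j}$ in $t$ or applying $\delta_{i}$, $\delta_{i}^{*}$ in the $x$ variable. For instance, for $g_{V,T}$ the kernel is the $L^{2}(tdt)$-valued function $K(x,y)=\{\partial_{t}G_{t}^{\a}(x,y)\}_{t>0}$; for $g_{H,\widetilde T}^{j,i}$ it is $\{\delta_{i}\widetilde{G}_{t}^{\a,j}(x,y)\}_{t>0}$, and so on. A standard density argument (using, e.g., finite linear combinations of $\ell_{k}^{\a}$) together with Fubini allows one to interchange the differentiation/$L^{2}$-norm with the integral against $f$ when $x\notin \supp f$, identifying the operator with integration against the kernel above.

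The core of the proof, and the main obstacle, is (c): showing that in each of the ten cases the vector-valued kernel $K(x,y)$ satisfies
\begin{equation*}
\|K(x,y)\|_{E}\lesssim \frac{1}{\mu_{\a}(B(x,|x-y|))},\qquad \|\nabla_{x,y}K(x,y)\|_{E}\lesssim \frac{1}{|x-y|\,\mu_{\a}(B(x,|x-y|))},
\end{equation*}
with $E=L^{2}(dt)$ or $E=L^{2}(tdt)$ as appropriate; the gradient bounds immediately give the Hölder-type smoothness conditions needed for Calderón-Zygmund operators. The approach is the one indicated after the statement of the theorem: start from the explicit formula for $G_{t}^{\a}$ (and the identity \eqref{gfal} for $\widetilde G_{t}^{\a,j}$), use Schläfli's integral representation for the modified Bessel function $I_{\a_i}$ that appears in $G_{t}^{\a}$ to replace each $I_{\a_i}(x_iy_i/\sinh 2t)$ by an integral in a variable $s$ against a measure depending on $\a_i$, and then commute the operations $\partial_{t}$, $\delta_{i}$, $\delta_{j}^{*}$ with this integral. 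This yields kernels where the Bessel function is replaced by a clean Gaussian-type integrand, for which one can carry out pointwise domination and then integrate in $t$ (for $g_{V,T}$, $g_{H,T}^{i}$, $g_{V,\widetilde T}^{j}$, $g_{H,\widetilde T}^{j,i}$) or, after passing through subordination, obtain the Poisson versions. The restriction $\a\in[-1\slash 2,\infty)^{d}$ ensures the Schläfli measure has a tractable form and the estimates close up. The bulk of the work, and the technical heart of the paper, lies in executing these pointwise bounds uniformly across the product structure in $d$ and across all ten variants, and checking the resulting $t$-integrals with weight $dt$ or $tdt$ match the target $1/\mu_{\a}(B(x,|x-y|))$ (and its derivative counterpart); this is exactly what Section \ref{ker} is devoted to.
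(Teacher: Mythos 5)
Your proposal reproduces the paper's own strategy: the theorem is reduced to $L^{2}(d\mu_{\a})$-boundedness (Propositions \ref{ogrv}--\ref{ogrh}), kernel association via density and Fubini (Proposition \ref{prestowP}), and the standard size/smoothness estimates obtained from Schl\"afli's representation of the Laguerre heat kernel together with subordination for the Poisson cases (Theorem \ref{preg}). The structure, the key lemmas implicitly invoked, and the role of the restriction $\a\in[-1\slash 2,\infty)^{d}$ all match the paper's argument.
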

Let $\B$ be a Banach space and let $K(x,y)$ be a kernel defined on $\R\times\R\backslash\{(x,y) : x=y\}$ and taking values in $\B$. We say that $K(x,y)$ is a standard kernel in the sense of the space of homogeneous type $(\R,d\mu_{\a},|\cdot|)$ if it satisfies the growth estimate
\begin{align}\label{gr}
\|K(x,y)\|_{\B}
&\lesssim
\frac{1}{\mu_{\alpha}(B(x,|y-x|))},\\\nonumber
\end{align}
and the smoothness estimates
\begin{align}\label{sm1}
\|K(x,y)-K(x',y)\|_{\B}
&\lesssim
\frac{|x-x'|}{|x-y|} \;
\frac{1}{\mu_{\alpha}(B(x,|y-x|))},\qquad |x-y|>2|x-x'|,\\\label{sm2}
\|K(x,y)-K(x,y')\|_{\B}
&\lesssim
\frac{|y-y'|}{|x-y|} \;
\frac{1}{\mu_{\alpha}(B(x,|y-x|))},\qquad |x-y|>2|y-y'|.\\\nonumber
\end{align}
Notice that here, in view of the doubling property of $\mu_{\a}$, in any occurrence the ball $B(x,|y-x|)$ can be replaced by $B(y,|y-x|)$.

A linear operator $T$ assigning to each $f\in L^2(d\mu_{\a})$ a measurable $\B$-valued function $Tf$ on $\R$ is a (vector-valued) Calder\'on-Zygmund operator in the sense of the space $(\R,d\mu_{\alpha},|\cdot|)$ if
\begin{enumerate}
    \item $T$ is bounded from $L^2(d\mu_{\a})$ to $L^2_{\B}(d\mu_{\a})$,
  \item there exists a standard $\B$-valued kernel $K(x,y)$ such that
\begin{align*}
Tf(x)=\int_{\R}K(x,y)f(y)\,d\mu_{\a}(y),\qquad \textrm{a.e.}\,\,\, x\notin \supp f,
\end{align*} 
for every $f\in L^2(\R,d\mu_{\a})$ vanishing outside a compact set contained in $\R$ (we write shortly $T\sim K(x,y)$ for this kind of association).
\end{enumerate}
Here integration of $\B$-valued functions is understood in Bochner's sense, and $L_{\B}^{2}$ is the Bochner-Lebesgue space of all $\B$-valued $d\mu_{\a}$-square integrable functions on $\R$.
It is well known that a large part of the classical theory of Calder\'on-Zygmund operators remains valid, with appropriate adjustments, when the underlying space is of homogeneous type and the associated kernels are vector-valued, see the comments in \cite[p.\,649]{NS1} and references given there.

The proof of Theorem \ref{gfun} splits naturally into showing the following three results.

\begin{pro}\label{preogr}
Let $\a\in[-1\slash 2,\infty)^{d}$. The $g$-functions listed above are bounded on $L^2(d\mu_{\a})$. Consequently, each of the $g$-functions, viewed as a vector-valued operator, is bounded from $L^2(d\mu_{\a})$ to $L^2_{\B}(d\mu_{\a})$, where $\B=L^2(dt)$ in the cases of $g_{H,T}^{i}$ and $g_{H,\widetilde T}^{j,i}$, and $\B=L^2(tdt)$ in the remaining cases.
\end{pro}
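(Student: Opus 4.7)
The plan is to establish scalar $L^{2}(d\mu_{\a})$ boundedness of each $g$-function via the spectral theorem; the vector-valued conclusion from $L^{2}(d\mu_{\a})$ to $L^{2}_{\B}(d\mu_{\a})$ is then automatic, since for the associated nonlinear square function $g(f)(x)=\|Tf(x)\|_{\B}$, Fubini gives $\|Tf\|_{L^{2}_{\B}(d\mu_{\a})}=\|g(f)\|_{L^{2}(d\mu_{\a})}$. I would work first on finite linear combinations of the eigenfunctions $\ell_{k}^{\a}$ (respectively $x_{j}\ell_{k}^{\a+e_{j}}$ for the modified semigroups), where termwise differentiation and all interchanges of $\sum$ and $\int$ are immediate, and then extend by density.

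For the vertical $g$-functions, differentiating \eqref{pc} in $t$ gives $\partial_{t}T_{t}^{\a}f=-\sum_{n}\ll_{n}^{\a}e^{-t\ll_{n}^{\a}}\mathcal{P}_{n}^{\a}f$; Parseval together with Fubini and $\int_{0}^{\infty}te^{-2t\ll}\,dt=(4\ll^{2})^{-1}$ yields
\[
\|g_{V,T}(f)\|_{L^{2}(d\mu_{\a})}^{2}
=\sum_{n}(\ll_{n}^{\a})^{2}\int_{0}^{\infty} te^{-2t\ll_{n}^{\a}}\,dt\,\|\mathcal{P}_{n}^{\a}f\|^{2}
=\tfrac{1}{4}\|f\|^{2}.
\]
The same calculation with $\sqrt{\ll_{n}^{\a}}$ in place of $\ll_{n}^{\a}$ handles $g_{V,P}$, and using \eqref{tfal}, \eqref{pp} together with the orthonormality of $\{x_{j}\ell_{m}^{\a+e_{j}}\}_{m\in\N^{d}}$ controls $g_{V,\widetilde T}^{j}$ and $g_{V,\widetilde P}^{j}$.

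For the horizontal $g$-functions the crucial algebraic input is the decomposition $L_{\a}=2|\a|+2d+\sum_{i}\delta_{i}^{*}\delta_{i}$ recalled in the excerpt, together with its counterpart for $M_{j}^{\a}$. A short computation of the commutator from the explicit formulas gives $\delta_{j}\delta_{j}^{*}-\delta_{j}^{*}\delta_{j}=\frac{2\a_{j}+1}{x_{j}^{2}}+2$, so
\[
M_{j}^{\a}=2|\a|+2d+\sum_{i\neq j}\delta_{i}^{*}\delta_{i}+\delta_{j}\delta_{j}^{*},
\]
which matches exactly the derivatives entering $g_{H,\widetilde T}^{j,i}$ (with $i\neq j$) and $g_{H,\widetilde T}^{j,j}$. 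Using $\langle\delta_{i}g,\delta_{i}g\rangle_{d\mu_{\a}}=\langle\delta_{i}^{*}\delta_{i}g,g\rangle_{d\mu_{\a}}$ (and the symmetric identity for $\delta_{j}^{*}$) with $g=T_{t}^{\a}f$, the spectral formula produces
\[
\sum_{i=1}^{d}\|\delta_{i}T_{t}^{\a}f\|_{L^{2}(d\mu_{\a})}^{2}
=\langle(\mathcal{L}_{\a}-2|\a|-2d)T_{t}^{\a}f,T_{t}^{\a}f\rangle_{d\mu_{\a}}
=\sum_{n}4n\,e^{-2t\ll_{n}^{\a}}\|\mathcal{P}_{n}^{\a}f\|^{2},
\]
and integration in $dt$ bounds $\sum_{i}\|g_{H,T}^{i}(f)\|^{2}$ by $\sum_{n}(2n/\ll_{n}^{\a})\|\mathcal{P}_{n}^{\a}f\|^{2}\le\tfrac{1}{2}\|f\|^{2}$. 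The Poisson version $g_{H,P}^{i}$ is identical with $t\,dt$ replacing $dt$, producing the bound $\tfrac{1}{4}\|f\|^{2}$; the modified cases $g_{H,\widetilde T}^{j,\cdot}$ and $g_{H,\widetilde P}^{j,\cdot}$ are treated the same way with $\mathcal{M}_{j}^{\a}$ and $\mathcal{P}_{n}^{\a,j}$ in the roles of $\mathcal{L}_{\a}$ and $\mathcal{P}_{n}^{\a}$. I expect no real obstacle: everything collapses to Plancherel and an explicit Gamma integral, modulo the bookkeeping needed to keep track of the diagonal ($\delta_{j}^{*}$) versus off-diagonal ($\delta_{i}$, $i\neq j$) derivatives in the modified setting. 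Note that the restriction $\a\in[-1/2,\infty)^{d}$ is not used at this stage; it will enter only in the kernel estimates of the subsequent sections.
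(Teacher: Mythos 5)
Your proof is correct, and for the vertical $g$-functions it coincides with the paper's argument (termwise differentiation, Fubini, Parseval, and the explicit Gamma integral), so there is nothing to add there. For the horizontal $g$-functions, however, your route differs from the paper's. You exploit the quadratic-form identity
$\langle\delta_{i}g,\delta_{i}g\rangle_{d\mu_{\a}}=\langle\delta_{i}^{*}\delta_{i}g,g\rangle_{d\mu_{\a}}$
(and its twin for $\delta_{j}^{*}$) together with the decompositions
$\mathcal{L}_{\a}=2|\a|+2d+\sum_{i}\delta_{i}^{*}\delta_{i}$ and
$\mathcal{M}_{j}^{\a}=2|\a|+2d+\sum_{i\ne j}\delta_{i}^{*}\delta_{i}+\delta_{j}\delta_{j}^{*}$,
so that the sum of the squared horizontal derivatives collapses to $\langle(\mathcal{L}_{\a}-2|\a|-2d)T_{t}^{\a}f,T_{t}^{\a}f\rangle$ (resp.\ the $\mathcal{M}_{j}^{\a}$-analogue), which has eigenvalue $4n$ on the $n$-th spectral subspace; integrating in $t$ then gives the bound. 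The paper instead invokes the explicit lowering formulas $\delta_{j}\ell_{k}^{\a}=-2\sqrt{k_{j}}\,x_{j}\ell_{k-e_{j}}^{\a+e_{j}}$ and $\delta_{j}^{*}(x_{j}\ell_{k-e_{j}}^{\a+e_{j}})=-2\sqrt{k_{j}}\,\ell_{k}^{\a}$ to write $\delta_{i}\widetilde P_{t}^{\a,j}f$ (say) as a series in a shifted orthonormal basis and applies Parseval for that basis. The two approaches are of course two faces of the same computation: your quadratic-form argument is more structural and avoids ever writing down the shifted bases, at the small cost of only yielding the $\ell^{2}$-sum $\sum_{i}\|g_{H,\cdot}^{\,\cdot,i}(f)\|^{2}$ directly (individual bounds follow trivially by positivity), whereas the paper's explicit-formula argument gives each individual $\|g_{H,\cdot}^{\,\cdot,i}(f)\|^{2}$ as a closed spectral sum (and hence the two-sided comparison of Proposition \ref{ogrh}). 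You are also right that the restriction $\a\in[-1/2,\infty)^{d}$ plays no role here — the paper's Propositions \ref{ogrv} and \ref{ogrh} are indeed stated for all $\a\in(-1,\infty)^{d}$.

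One small point worth making explicit in a write-up: the adjointness $\langle\delta_{i}g,\delta_{i}g\rangle=\langle\delta_{i}^{*}\delta_{i}g,g\rangle$ involves an integration by parts against $x_{i}^{2\a_{i}+1}dx_{i}$, whose boundary term at $0$ vanishes only because $\a_{i}>-1$ and the functions involved (here: finite linear combinations of Laguerre eigenfunctions, hence $e^{-|x|^{2}/2}$ times polynomials in $x_{i}^{2}$) are regular at the origin and rapidly decaying. Working with finite eigenfunction expansions first, as you do, is exactly what makes this clean; one should just say so rather than take it for granted.
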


\begin{pro}\label{prestowP}
Let $\alpha\in [-1\slash 2,\infty)^{d}$. Then the square functions under consideration, viewed as vector-valued operators related to $\B=L^2(dt)$ (the cases of $g_{H,T}^{i}$ and $g_{H,\widetilde T}^{j,i}$) or $\B=L^2(tdt)$ (the remaining cases), are associated with the following kernels:
\begin{displaymath}
\begin {array}{lll}
g_{V,T}\sim\big\{\partial_{t}G_{t}^{\a}(x,y)\big\}_{t>0}
,\qquad
&g_{V,P}\sim\big\{\partial_{t}P_{t}^{\a}(x,y)\big\}_{t>0}
,
&\\
g_{H,T}^{i}\sim\big\{\delta_{i}G_{t}^{\a}(x,y)\big\}_{t>0}
,\qquad
&g_{H,P}^{i}\sim\big\{\delta_{i}P_{t}^{\a}(x,y)\big\}_{t>0}
,
&\\
g_{V,\widetilde T}^{j}\sim\big\{\partial_{t}\widetilde G_{t}^{\a,j}(x,y)\big\}_{t>0}
,\qquad
&g_{V,\widetilde P}^{j}\sim\big\{\partial_{t}\widetilde P_{t}^{\a,j}(x,y)\big\}_{t>0}
,
&\qquad j=1,\ldots,d,\\
g_{H,\widetilde T}^{j,i}\sim\big\{\delta_{i}\widetilde G_{t}^{\a,j}(x,y)\big\}_{t>0}
,\qquad
&g_{H,\widetilde P}^{j,i}\sim\big\{\delta_{i}\widetilde P_{t}^{\a,j}(x,y)\big\}_{t>0}
,
&\qquad i,j=1,\ldots,d,\quad j\ne i,\\
g_{H,\widetilde T}^{j,j}\sim\big\{\delta_{j}^{*}\widetilde G_{t}^{\a,j}(x,y)\big\}_{t>0}
,\qquad
&g_{H,\widetilde P}^{j,j}\sim\big\{\delta_{j}^{*}\widetilde P_{t}^{\a,j}(x,y)\big\}_{t>0}
,
&\qquad j=1,\ldots,d.\\
\end {array}
\end{displaymath}
\end{pro}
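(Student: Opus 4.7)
The plan is to fix $f \in L^2(d\mu_{\a})$ with compact support $F \subset \R$ and $x \notin F$, and to justify two successive interchanges of limits for each of the twelve $g$-functions. The first step is to pass the relevant differential operator ($\partial_t$, $\delta_i$, or $\delta_j^*$) under the integral sign in the kernel representation of the semigroup. For instance, one wants
\begin{equation*}
\partial_t T_t^{\a} f(x) = \int_F \partial_t G_t^{\a}(x,y) f(y)\, d\mu_{\a}(y), \qquad t>0.
\end{equation*}
This is a routine dominated convergence argument: for $x$ fixed off $F$, $G_t^{\a}(x,y)$ is smooth in all variables, and on any compact subinterval of $t\in(0,\infty)$ the derivatives are uniformly bounded in $y\in F$ by a constant. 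The corresponding identities for $\widetilde T_t^{\a,j}$ follow at once from the factorization \eqref{gfal}, while the spatial derivative cases $\delta_i$ and $\delta_j^*$ are handled analogously.

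The second step is to promote these pointwise-in-$t$ scalar identities to a Bochner integral identity in $\B=L^2(dt)$ or $\B=L^2(tdt)$. By the Bochner-valued Fubini theorem it suffices to verify strong measurability of the kernels and the finiteness
\begin{equation*}
\int_F \big\| \{\partial_t G_t^{\a}(x,y)\}_{t>0} \big\|_{\B} \, |f(y)|\, d\mu_{\a}(y) < \infty,
\end{equation*}
together with the eleven analogous statements. Measurability is standard, so the task reduces to checking that each kernel has finite $\B$-norm pointwise for $x\neq y$, with a bound locally integrable in $y\in F$. This in turn rests on the explicit product formula for $G_t^{\a}$ and the asymptotics $I_\nu(z)\sim z^\nu$ as $z\to 0^+$ and $I_\nu(z)\sim (2\pi z)^{-1/2}e^z$ as $z\to\infty$, together with the positive lower bound for $|x-y|$ on $F$.

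For the Poisson-based $g$-functions, the subordination principle reduces everything to the heat case. One first derives
\begin{equation*}
P_t^{\a}(x,y) = \frac{1}{\sqrt{\pi}}\int_0^{\infty} \frac{e^{-u}}{\sqrt{u}} G_{t^2/(4u)}^{\a}(x,y)\, du,
\end{equation*}
and its $\widetilde P_t^{\a,j}$ analogue, both justified by Fubini after checking integrability. The two steps above are then repeated with the Poisson kernels replacing the heat kernels, where the additional $u$-integration poses no new difficulty thanks to the rapid decay of $e^{-u}/\sqrt{u}$.

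The main obstacle is the $\B$-norm finiteness required in the second step. Even though only qualitative finiteness is needed here, not the sharp growth estimate \eqref{gr} (which is reserved for Section~\ref{ker}), one must still control $\int_0^{\infty}|\partial_t G_t^{\a}(x,y)|^2 t\, dt$, $\int_0^{\infty}|\delta_i G_t^{\a}(x,y)|^2\, dt$ and their $\widetilde G$ and Poisson counterparts for each fixed $x\neq y$. The decay of $\exp(-\tfrac{1}{2}\coth(2t)(|x|^2+|y|^2))$ combined with $|x-y|>0$ handles the regime $t\to 0^+$, while the positive spectral lower bound $\lambda_0^{\a}=2|\a|+2d$ handles $t\to\infty$; the bookkeeping is routine in spirit but must be executed in all twelve cases.
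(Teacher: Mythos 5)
Your proposal is correct in substance but follows a genuinely different route than the paper. The paper's proof dualizes in both $x$ and $t$: it tests each side of the desired identity against product functions $h(x,t)=h_1(x)h_2(t)$ with $h_1,h_2$ smooth, compactly supported and $\supp h_1\cap\supp f=\emptyset$, then reduces both sides to the same expression in terms of the Laguerre coefficients $\langle f,\ell_k^{\a}\rangle$ and $\langle h_1,\ell_k^{\a}\rangle$ by a chain of Fubini interchanges. Those interchanges are justified using the $L^2$-boundedness of Proposition~\ref{ogrv}/\ref{ogrh}, the Laguerre-function bounds \eqref{lka}, and --- crucially --- a \emph{forward reference} to the growth estimate of Theorem~\ref{preg} (proved in Section~\ref{ker}). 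Your proposal instead fixes $x\notin\supp f$, establishes the scalar kernel identity for each fixed $t$ by differentiation under the integral sign, and then upgrades it to a $\B$-valued Bochner identity by checking $\int_F\|K(x,\cdot)\|_{\B}\,|f|\,d\mu_{\a}<\infty$ directly from the explicit product formula for $G_t^{\a}$ and the Bessel asymptotics, exploiting the superexponential decay as $t\to0^+$ (governed by $\exp(-|x-y|^2/(2\sinh 2t))$ after combining the $\coth$-exponential with $I_\nu(z)\sim(2\pi z)^{-1/2}e^z$) and the exponential decay $\sim e^{-t\lambda_0^\a}$ as $t\to\infty$. The main thing your route buys is self-containment: it avoids the forward reference to Section~\ref{ker} and uses only qualitative finiteness rather than the sharp bound \eqref{gr}. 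What it gives up is the uniform treatment that the paper's spectral/duality scheme provides, where all twelve cases reduce to the same Parseval-type manipulation; in your scheme each of the twelve kernels must be estimated separately (as you acknowledge), and the operators $\delta_i$, $\delta_j^*$ require differentiation under the integral sign in the $x$-variable with the concomitant local uniform domination, which you gesture at but do not execute. Both routes are sound; yours is arguably the more elementary, and the paper's the more economical given the kernel estimates it builds anyway.
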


\begin{thm}\label{preg}
Assume that $\a\in[-1\slash 2,\infty)^{d}$. Let $K(x,y)$ be any of the vector-valued kernels listed in Proposition \ref{prestowP}. Then $K(x,y)$ satisfies the standard estimates \eqref{gr}, \eqref{sm1}, \eqref{sm2}, with $\B=L^2(dt)$ in the cases of $\{\delta_{i}G_{t}^{\a}(x,y)\}_{t>0}$, $\{\delta_{i}\widetilde G_{t}^{\a,j}(x,y)\}_{t>0}$, $i\ne j$ and $\{\delta_{j}^{*}\widetilde G_{t}^{\a,j}(x,y)\}_{t>0}$, and $\B=L^2(tdt)$ in the remaining cases.
\end{thm}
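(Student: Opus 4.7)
The plan is to follow the Schl\"afli/Sasso/Nowak--Stempak framework. The first step is to represent the modified Bessel functions in the heat kernel $G_t^\alpha(x,y)$ via the integral formula (valid for $\nu>-1/2$)
\begin{equation*}
\frac{I_\nu(z)}{z^\nu}
=\frac{1}{\sqrt{\pi}\,2^\nu\,\Gamma(\nu+1/2)}\int_{-1}^{1} e^{zs}(1-s^2)^{\nu-1/2}\,ds,
\end{equation*}
which is precisely where the restriction $\alpha\in[-1/2,\infty)^d$ enters. Substituting this into $G_t^\alpha(x,y)$ converts the kernel into an integral over $s\in[-1,1]^d$ (against a tensor-product measure with density $\prod_i(1-s_i^2)^{\alpha_i-1/2}$) of an exponential factor of the form $\exp(-\zeta\, q_{\pm}(x,y,s))$ multiplied by an elementary function of $\zeta=\coth 2t$ (after the natural substitution). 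The functions $q_{\pm}(x,y,s)$ are explicit non-negative quadratic expressions satisfying $q_{\pm}(x,y,s)\gtrsim|x-y|^2$ uniformly in $s$, which is the crucial positivity that controls all later estimates.

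Next I compute the required differentiations of the integral representation. The heat-semigroup kernels $\partial_t G_t^\alpha$ and $\delta_i G_t^\alpha$ each produce a polynomial in $(\zeta, q_{\pm}, x_i, y_i)$ times the same exponential, and differentiation passes under the Schl\"afli integral. The modified kernels are immediately handled by \eqref{gfal}, which reduces $\widetilde G_t^{\alpha,j}$ to a factor $e^{-2t}x_jy_j$ times $G_t^{\alpha+e_j}$, i.e., a shifted instance of the same kernel; the $\delta_j^{*}$-differentiation is then carried out directly from the definition of $\delta_j^{*}$. For the Poisson kernels, subordination converts the estimates into integrals over $u>0$ of the corresponding heat-kernel bounds, so it suffices to obtain sufficiently uniform-in-$t$ heat bounds and then integrate.

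The growth estimate \eqref{gr} is then reduced, after computing the $L^2(dt)$ or $L^2(tdt)$ norm of $K(x,y)$, to interchanging the order of the $t$- and $s$-integrations and explicitly evaluating the resulting one-variable $t$-integral by elementary Gamma-type identities. What remains is an $s$-integral of the form
\begin{equation*}
\int_{[-1,1]^d} q_{\pm}(x,y,s)^{-\sigma}\prod_{i=1}^{d}(1-s_i^2)^{\alpha_i-1/2}\,ds,
\end{equation*}
with exponent $\sigma=d+|\alpha|$ tuned precisely to the measure of the ball $B(x,|y-x|)$ in the space of homogeneous type $(\R,d\mu_\alpha,|\cdot|)$. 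The key measure-theoretic lemmas from \cite{NS1} show that this integral is bounded by $1/\mu_\alpha(B(x,|y-x|))$, which is exactly \eqref{gr}. For the smoothness estimates \eqref{sm1}--\eqref{sm2} I invoke the mean value theorem to reduce to estimating $\nabla_{x}K(\theta,y)$ and $\nabla_{y}K(x,\theta)$ uniformly on segments, where $\theta$ is the intermediate point; one further differentiation of the Schl\"afli integral produces an extra factor controlled by $q_{\pm}(x,y,s)^{1/2}$, raising the exponent $\sigma$ by $1/2$ and delivering exactly the required gain $|x-x'|/|x-y|$ (respectively $|y-y'|/|x-y|$) after reapplying the same $s$-integral lemma.

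The main obstacle will be handling the modified-Poisson horizontal $g$-functions $g_{H,\widetilde P}^{j,j}$, which combine three sources of delicacy: the singular coefficient $(2\alpha_j+1)/x_j$ built into $\delta_j^{*}$, the extra $x_jy_j$ prefactor from \eqref{gfal} (whose derivatives are singular on $\partial\R$), and the subordination integral over $u$. One has to check that all differentiations in $x_j$ or $y_j$ produce terms that combine to an $s$-integrand still integrable against the Jacobi-type density $(1-s_j^2)^{\alpha_j-1/2}$, and that the $u$-integral after subordination does not destroy the sharp exponent $\sigma$ needed for compatibility with $\mu_\alpha(B(x,|y-x|))$. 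All such bookkeeping, carried out in full detail in Section \ref{ker}, is technical but follows the template just described.
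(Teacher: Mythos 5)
Your outline correctly identifies the framework the paper uses (Schl\"afli's representation, differentiation under the $\Pi_\alpha$-integral, the restriction $\alpha\in[-1/2,\infty)^d$, and the ball-measure lemma from \cite{NS1}), but two load-bearing mechanisms are missing or misdescribed, and without them the estimates do not close. First, the passage from the $L^2(dt)$ or $L^2(tdt)$ norm of the $s$-integral to an $s$-integral of norms is not ``interchanging the order of integration'' followed by ``evaluating the $t$-integral by Gamma-type identities.'' Since $\|K(x,y)\|_{\B}^2=\int_0^\infty|K(x,y,t)|^2\,(t)\,dt$ with $K(x,y,t)$ itself a $\Pi_\alpha(ds)$-integral, a literal Tonelli interchange produces a double integral in two $s$-variables and yields nothing usable; the paper instead invokes \emph{Minkowski's integral inequality} to pull $\Pi_\alpha(ds)$ \emph{outside} the norm. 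Moreover, after the substitution $\zeta=\tanh t$ (your $\zeta=\coth 2t$ is a different normalization; the paper's exponential is $\exp(-q_+/(4\zeta)-\zeta q_-/4)$), the $\zeta$-integral carries a logarithmic Jacobian $\log\frac{1+\zeta}{1-\zeta}$ in the $L^2(tdt)$ cases, and it is \emph{not} evaluated exactly: one must first peel off the precise power $q_+^{-(d+|\alpha|)}$ by Lemma \ref{oq} and then dispose of the remaining $\zeta$-integral by Lemma \ref{lem5.4} or Lemma \ref{uw}. It is this step that produces the sharp exponent $\sigma$ required for Lemma \ref{lem4}.

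Second, your smoothness argument is incomplete. After the mean value theorem, every $q_\pm$ occurring in $\nabla_x K(\theta,y)$ is evaluated at the intermediate point $\theta$, i.e., one controls $q_\pm(\theta,y,s)$, whereas the target involves $\mu_\alpha(B(x,|y-x|))$ and hence $q_\pm(x,y,s)$. The essential bridge is the two-sided comparison $\tfrac14 q_\pm(x,y,s)\le q_\pm(\theta,y,s)\le 4q_\pm(x,y,s)$, valid when $|x-y|>2|x-x'|$ and $\theta$ lies on the segment from $x$ to $x'$ (Lemma \ref{lemat}); you never invoke anything of this kind, and without it the mean value estimate cannot be converted into a bound involving $x$. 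A smaller point: the $(2\alpha_j+1)/x_j$ singularity you single out as the hardest obstacle in fact cancels, since $\delta_j^*$ hits $\widetilde G_t^{\alpha,j}=e^{-2t}x_jy_jG_t^{\alpha+e_j}$ and $\frac{2\alpha_j+1}{x_j}\cdot x_jy_j$ is bounded. The real subtlety in the modified cases, which \eqref{gfal} does not make disappear, is that the prefactor $x_jy_j$ forces one to apply the mean value theorem to the quotient $\widetilde K/x_j$ (or $\widetilde K/y_j$) rather than to $\widetilde K$ itself, and to absorb the leftover factor $(x_j+y_j)^{2\delta_j}$ into Lemma \ref{lem4} with nonzero $\delta,\kappa$; this is the bookkeeping carried out in Sections \ref{k4} and \ref{k5}.
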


Proofs of Propositions \ref{preogr} and \ref{prestowP} are given in Section \ref{o+s} (in fact we show somewhat stronger result than Proposition \ref{preogr}). The proof of Theorem \ref{preg} is the most technical part of the paper and is located in Section \ref{ker}.
The restriction  $\a\in[-1\slash 2,\infty)^{d}$ appearing in Theorem \ref{preg} and consequently in Proposition \ref{prestowP} and in Theorem \ref{gfun} is imposed by the applied method of proving standard estimates; see \cite[p.\,666]{NS1} for more comments.

Next, we observe that the $g$-functions under consideration can be naturally defined pointwise, by the same formulas, for general functions $f$ from weighted spaces $L^{p}(wd\mu_{\a})$, $1\leq p<\infty$, $w\in A_{p}^{\a}$. Indeed, the estimates (cf. \cite[(2.7), (2.8)]{NS1})
\begin{align}\label{lka}
|\langle f,\ell_{k}^{\a}\rangle|
\lesssim
\big(|k|+1\big)^{c_{d,\a}}\|f\|_{L^{p}(wd\mu_{\a})},\qquad
|\ell_{k}^{\a}(x)|
\lesssim
\big(|k|+1\big)^{c_{d,\a}},\qquad k\in \N^{d},\quad x\in\R,
\end{align}
and parallel estimates for the 'differentiated' Laguerre systems $\{x_{j}\ell_{k}^{\a+e_{j}}\}$, $j=1,\ldots,d$, allow to verify that the spectral series and integral representations of the relevant semigroups converge for such general $f$ producing smooth functions of $(t,x)\in\mathbb{R}_{+}\times\R$; see \cite[Section 4]{No1} and comments in \cite[Section 2]{NS1}. This provides the relevant extensions of the semigroups to the weighted $L^{p}$ spaces. As a consequence of Theorem \ref{gfun} we state the following.

\begin{cor}\label{ext}
Let $\a\in[-1\slash 2,\infty)^{d}$. Then each of the square functions listed above is bounded on $L^{p}(w d\mu_{\a})$, $w\in A_{p}^{\a}$, $1<p<\infty$, and from $L^1(w d\mu_{\a})$ to weak $L^{1}(w d\mu_{\a})$, $w\in A_{1}^{\a}$.
\end{cor}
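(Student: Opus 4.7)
The plan is to deduce Corollary \ref{ext} directly from Theorem \ref{gfun} together with the vector-valued Calder\'on-Zygmund theory adapted to the space of homogeneous type $(\R,d\mu_{\a},|\cdot|)$. By Theorem \ref{gfun}, each of the $g$-functions, viewed as a vector-valued linear operator with values in the appropriate Banach space $\B$ (either $L^{2}(dt)$ or $L^{2}(tdt)$), is $L^{2}(d\mu_{\a})$-bounded and associated with a standard $\B$-valued kernel. The general weighted Calder\'on-Zygmund theory on spaces of homogeneous type (see the comments and references in \cite[p.\,649]{NS1}) will then provide, for $1<p<\infty$ and $w\in A_{p}^{\a}$, a bounded extension of the corresponding linear operator from $L^{p}(wd\mu_{\a})$ to $L^{p}_{\B}(wd\mu_{\a})$, together with the weak type $(1,1)$ bound with respect to $A_{1}^{\a}$. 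Taking $\B$-norms then yields the claimed estimates for the (nonlinear) $g$-functions.

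The only matter to verify is that the pointwise definition of each $g$-function on $L^{p}(wd\mu_{\a})$ — which makes sense thanks to the estimates \eqref{lka} and the discussion preceding the corollary — agrees with the extension furnished by the Calder\'on-Zygmund theory. To this end I would first consider bounded $f$ with compact support contained in $\R$. Any such $f$ lies in $L^{2}(d\mu_{\a})\cap L^{p}(wd\mu_{\a})$, so on the one hand the $L^{2}$-bound of Proposition \ref{preogr} applies, and on the other hand Proposition \ref{prestowP} gives $g(f)(x)=\|\int_{\R}K(x,y)f(y)\,d\mu_{\a}(y)\|_{\B}$ for a.e.\ $x\notin\supp f$, which is exactly the value of the Calder\'on-Zygmund extension there. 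Thus the two definitions coincide on this subclass, which is dense in $L^{p}(wd\mu_{\a})$ (since $A_{p}^{\a}$ weights are locally integrable against $d\mu_{\a}$); a standard density argument, combined with Fatou's lemma applied within the definition of the $g$-functions, then propagates the weighted $L^{p}$ and weak-type bounds to all of $L^{p}(wd\mu_{\a})$.

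The expected main obstacle is not analytical — all the deep work sits in Theorem \ref{gfun}, whose proof rests on the kernel estimates of Theorem \ref{preg}. Here one only needs to be careful with the bookkeeping: confirming that the version of weighted Calder\'on-Zygmund theory on spaces of homogeneous type that applies to vector-valued kernels is indeed available in the $A_{p}^{\a}$ framework, and that no measurability or convergence issue spoils the identification, on weighted spaces, of pointwise $g$-functions with their linearized counterparts.
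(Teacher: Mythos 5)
Your proposal follows the same route as the paper: invoke Theorem \ref{gfun} together with the general weighted Calder\'on--Zygmund theory on the homogeneous space to get a bounded vector-valued extension, then identify that abstract extension with the pointwise formula for the $g$-function via a density argument plus Fatou's lemma (the paper does this in detail for $g_{V,T}$, using \eqref{lka} to obtain pointwise convergence $\partial_t T_t^\alpha f_n(x)\to\partial_t T_t^\alpha f(x)$ for an approximating sequence $f_n\in L^p(wd\mu_\a)\cap L^2(d\mu_\a)$, and applying Fatou twice). One small imprecision in your sketch: the appeal to Proposition \ref{prestowP} only establishes the kernel representation off $\supp f$, so it does not by itself show that $g(f)$ agrees a.e.\ with the CZ extension on your subclass; the cleaner identification on $L^p(wd\mu_\a)\cap L^2(d\mu_\a)$ simply uses uniqueness of the bounded extension of the $L^2$-bounded linearization, as the paper does.
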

Further consequences of Theorem \ref{gfun} can be derived from the general theory of Calder\'on-Zygmund operators, see \cite[Theorem 1.1]{BFS}. We leave details to interested readers.
\begin {proof}[Proof of Corollary \ref{ext}.]
We give a detailed reasoning only for $g_{V,T}$, adapting suitably the arguments used in the proof of \cite[Theorem 2.2]{StTo2}. The remaining cases are proved similarly.

Fix $1\leq p<\infty$ and $w\in A_{p}^{\a}$. Let $G_{V,T}\colon L^{p}(wd\mu_{\a})\cap L^{2}(d\mu_{\a})\mapsto L^{2}_{L^2(tdt)}(wd\mu_{\a})$ be the vector-valued operator emerging from $g_{V,T}$.
By virtue of Theorem \ref{gfun} and the general theory the operator $G_{V,T}$ extends uniquely to a bounded linear operator $H$ from $L^{p}(wd\mu_{\a})$ to $L^{p}_{L^2(tdt)}(wd\mu_{\a})$ if $p>1$, or from $L^{1}(wd\mu_{\a})$ to the weak $L^{1}_{L^2(tdt)}(wd\mu_{\a})$ if $p=1$. This reduces the proof to showing that $Hf(x)=\{\partial_{t}T_{t}^{\a}f(x)\}_{t>0}$ for each $f\in L^{p}(wd\mu_{\a})$. For a given $f\in L^{p}(wd\mu_{\a})$ we choose a sequence $f_{n}\in L^{p}(wd\mu_{\a})\cap L^{2}(d\mu_{\a})$ such that $f_{n}\to f$ in $L^{p}(wd\mu_{\a})$. Let $g_{n}(t,x)=\partial_{t}T_{t}^{\a}f_{n}(x)$ and $g(t,x)=\partial_{t}T_{t}^{\a}f(x)$. Using \eqref{lka} it is not hard to check that for every fixed $t>0$ we have $g_{n}(t,x)\to g(t,x)$ pointwise. Thus assuming that $p>1$ (the case $p=1$ is treated similarly) and applying Fatou's lemma twice we obtain 
\begin{align*}
\int_{\R}&\bigg(\int_{0}^{\infty}\big|g(t,x)-Hf(t,x)\big|^{2}t\,dt\bigg)
^{p\slash 2}w(x)\,d\mu_{\a}(x)\\
=&
\int_{\R}\bigg(\int_{0}^{\infty}\liminf_{n\to\infty}\big|g_{n}(t,x)-Hf(t,x)\big|^{2}t\,dt\bigg)
^{p\slash 2}w(x)\,d\mu_{\a}(x)\\
\leq&
\liminf_{n\to\infty}\int_{\R}\bigg(\int_{0}^{\infty}\big|g_{n}(t,x)-Hf(t,x)\big|^{2}t\,dt\bigg)
^{p\slash 2}w(x)\,d\mu_{\a}(x).
\end{align*} 
The last expression is equal $0$ since $Hf$ is a limit of $G_{V,T}(f_{n})=\{g_{n}(t,x)\}_{t>0}$ in $L^{p}_{L^2(tdt)}(wd\mu_{\a})$. It follows that $g(t,x)=Hf(t,x)$ in $L^{p}_{L^2(tdt)}(wd\mu_{\a})$, which ends the proof.
\end {proof}
We conclude this section with various comments and remarks related to the main result.

\begin{rem}\label{ogrd}
Corollary \ref{ext} and standard arguments (see for instance \cite[p.\,85]{Ste}) allow to obtain also lower $L^{p}$ estimates for some of the considered $g$-functions. Moreover, this concerns the weighted setting with $A_{p}^{\a}$ weights admitted (cf. \cite[p.\,207]{StTr} for the special case of power $A_{p}^{\a}$ weights). More precisely, if $g$ stands for any of the vertical $g$-functions listed above, then under the assumption $\a\in[-1\slash 2,\infty)^{d}$ and for $1<p<\infty$, $w\in A_{p}^{\a}$ we have
\begin{align*}
\|f\|_{L^{p}(wd\mu_{\a})}
\lesssim
\|g(f)\|_{L^{p}(wd\mu_{\a})},\qquad f\in L^{p}(wd\mu_{\a}).
\end{align*}
A crucial fact in proving this is that $g$ is essentially an isometry on $L^2(d\mu_{\a})$. This is the case of all the vertical $g$-functions (see Proposition \ref{ogrv} below), but not the case of our horizontal $g$-functions, as can be seen in Section \ref{o+s}. Another relevant fact is that a weight $w\in A_{p}^{\a}$ if and only if $w^{-p'\slash p}\in A_{p'}^{\a}$, with $p'$ being the adjoint of $p$, $1\slash p + 1\slash p'=1$.
\end{rem}

Corollary \ref{ext} and Remark \ref{ogrd}, specified to $g_{V,P}$, together extend \cite[Proposition 2.1]{StTr} in several directions, by admitting more general weights, wider range of $\a$, and a multi-dimensional setting.

\begin{rem}
Some special cases of Corollary \ref{ext} follow by a general theory. This concerns unweighted $L^{p}$, $p\ne 1$, estimates for the vertical $g$-functions. The general result to be invoked is a refinement of Stein's Littlewood-Paley theory for semigroups \cite{Ste3} due to Coifman, Rochberg and Weiss \cite{CRW}, see also \cite[Theorem 2]{M}. In our setting the consequence is the following. Let $\a\in[-1\slash 2,\infty)^{d}$ and $1<p<\infty$. Then, with $g$ being any of the considered vertical $g$-functions, we have 
\begin{align*}
\|g(f)\|_{L^{p}(d\mu_{\a})}\leq C
\|f\|_{L^{p}(d\mu_{\a})},\qquad f\in L^{p}(d\mu_{\a}).
\end{align*}
It is remarkable that the  constant $C$ here, in contrast to the result of Corollary \ref{ext}, depends neither on the dimension nor on the type multi-index $\a$. Similar dimension-free estimates are true for the higher-order vertical $g$-functions, see \cite[Theorem 2]{M}. 

To see that application of the result from \cite{CRW} is indeed possible one has to ensure that the semigroups in question are symmetric contraction semigroups. In our case only contractivity is non-trivial. It can be easily deduced from \cite[Section 2]{St} that the operators $T_{t}^{\a}$, $t>0$, are contractions on all $L^{p}$ spaces, $1\leq p\leq\infty$, and for all $\a\in(-1,\infty)^{d}$. Then an analogous conclusion is obtained for $\widetilde T_{t}^{\a,j}$, $t>0$, by means of the estimate (cf. \cite[Proposition 4.3]{NS1}) $\widetilde G_{t}^{\a,j}(x,y)\leq G_{t}^{\a}(x,y)$ and with the restriction $\a\in[-1\slash 2,\infty)^{d}$. Contractivity of the Poisson semigroups follows by the subordination principle.
\end{rem}

\begin{rem}
By the subordination principle, the $g$-functions based on Laguerre-Poisson semigroups can be controlled pointwise by their analogues based on Laguerre heat semigroups. This allows to pass with strong and weak type estimates in the direction 'heat$\to$Poisson'. However, the property of being a Calder\'on-Zygmund operator is more subtle and such a direct passage is not possible, and the subordination principle must be applied on the level of standard kernel estimates, see Sections \ref{k3} and \ref{k6}.
\end{rem}

\begin{rem}
It is not appropriate to exchange the roles of $\delta_{j}$ and $\delta_{j}^{*}$ in the definitions of the horizontal $g$-functions considered in this paper. Assume for simplicity that $d=1$ and focus for instance on $g_{H,T}^{1}$. Let $\widetilde g_{H,T}^{1}(f)(x)=\|\delta_{1}^{*}T_{t}^{\a}f(x)\|_{L^2(dt)}$ be the $g$-function arising by replacing $\delta_{1}$ with $\delta_{1}^{*}$ in the definition of $g_{H,T}^{1}$. A direct computation reveals that $\widetilde g_{H,T}^{1}(\ell_{0}^{\a})(x)=\frac{1}{\sqrt{4\a+4}}\big|2x-\frac{2\a+1}{x}\big|\ell_{0}^{\a}(x)$. Since $\ell_{0}^{\a}\in L^{p}(d\mu_{\a})$ for all $p\geq 1$ and $\widetilde g_{H,T}^{1}(\ell_{0}^{\a})\notin L^{p}(d\mu_{\a})$ for $p\geq 2\a+2$ and $\a\neq-1\slash 2$, we see that $\widetilde g_{H,T}^1$ cannot be viewed as a vector-valued Calder\'on-Zygmund operator. This counterexample can be easily generalized to the multi-dimensional situation and the cases of $g_{H,T}^{i}$, $g_{H,P}^{i}$ and $g_{H,\widetilde T}^{j,i}$, $g_{H,\widetilde P}^{j,i}$ for $i\ne j$, whereas the cases of $g_{H,\widetilde T}^{j,j}$, $g_{H,\widetilde P}^{j,j}$ are more subtle.
\end{rem}

\begin{ex*}
To illustrate the classic application of $g$-functions and to see that $g$-functions based on 'modified' semigroups are also of interest, we now briefly show how the weighted $L^{p}$-boundedness of Riesz-Laguerre transforms $R_{j}^{\a}$ proved in \cite[Theorem 3.4]{NS1} can be recovered from Corollary \ref{ext}. Given $w\in A_{p}^{\a}$ and $j=1,\ldots,d$, we have (cf. \cite[(4.7)]{NS1} and \cite[Proposition 4.5]{NS1})
\begin{align*}
\partial_{t}\widetilde P_{t}^{\a,j}R_{j}^{\a}f=
-\delta_{j}P_{t}^{\a}f,\qquad f\in L^{p}(wd\mu_{\a})\cap L^{2}(d\mu_{\a}).
\end{align*}
This implies
\begin{align*}
g_{V,\widetilde P}^{j}(R_{j}^{\a}f)=
g_{H,P}^{j}(f),\qquad f\in L^{p}(wd\mu_{\a})\cap L^{2}(d\mu_{\a}),
\end{align*}
so combining Corollary \ref{ext} (specified to $g_{H,P}^{j}$) with Remark \ref{ogrd} (specified to $g_{V,\widetilde P}^{j}$) leads to the estimate
\begin{align*}
\|R_{j}^{\a}f\|_{L^{p}(wd\mu_{\a})}
\lesssim
\|f\|_{L^{p}(wd\mu_{\a})},\qquad f\in L^{p}(wd\mu_{\a})\cap L^{2}(d\mu_{\a}),
\end{align*}
for $1<p<\infty$. Consequently, $R_{j}^{\a}$ extends uniquely to bounded linear operator on $L^{p}(wd\mu_{\a})$, $1<p<\infty$, $w\in A_{p}^{\a}$.
\end{ex*}

\section{$L^{2}$-Boundedness and Kernel associations}\label{o+s}
\setcounter{equation}{0}
In this section we analyze behavior of the square functions introduced in Section \ref{pre} on the Hilbert space $L^2(d\mu_{\a})$. It occurs that all the vertical $g$-functions are essentially isometries on $L^2(d\mu_{\a})$. However, this is not true for the horizontal $g$-functions, which are at most comparable in $L^2(d\mu_{\a})$ norm with original functions.
We also show that the $g$-functions, viewed as vector-valued operators, are associated with the relevant kernels.

The following two propositions together imply Proposition \ref{preogr}.

\begin{pro}\label{ogrv}
Assume that $\alpha\in (-1,\infty)^{d}$. Then, for $f\in L^2(d\mu_{\a})$, the vertical square functions satisfy
\begin{align*}
\|g_{V,T}(f)\|_{L^2(d\mu_{\a})}
=
\frac{1}{2}\|f\|_{L^2(d\mu_{\a})},\qquad&
\|g_{V,P}(f)\|_{L^2(d\mu_{\a})}
=
\frac{1}{2}\|f\|_{L^2(d\mu_{\a})},\\
\|g_{V,\widetilde T}^{j}(f)\|_{L^2(d\mu_{\a})}
=
\frac{1}{2}\|f\|_{L^2(d\mu_{\a})},\qquad&
\|g_{V,\widetilde P}^{j}(f)\|_{L^2(d\mu_{\a})}
=
\frac{1}{2}\|f\|_{L^2(d\mu_{\a})},\qquad j=1,\ldots,d.
\end{align*}
\end{pro}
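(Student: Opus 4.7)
All four identities follow from the same template: use spectral calculus to pass to an orthogonal expansion, apply Parseval in the $x$-variable, invoke Fubini to move the $t$-integral inside the sum, and then evaluate a single elementary scalar integral. The only distinction between the four cases is whether we use the basis $\{\ell_k^\alpha\}$ or $\{x_j\ell_{k-e_j}^{\alpha+e_j}\}$ (both are orthonormal in $L^2(d\mu_\alpha)$, by the discussion in Section \ref{pre}), and whether the spectral exponent is $e^{-t\lambda_n^\alpha}$ or $e^{-t\sqrt{\lambda_n^\alpha}}$.

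\textbf{Heat case for $g_{V,T}$.} Starting from \eqref{pc}, differentiate termwise to get $\partial_t T_t^\alpha f = -\sum_{n\ge 0} \lambda_n^\alpha e^{-t\lambda_n^\alpha}\mathcal{P}_n^\alpha f$. Since the $\mathcal{P}_n^\alpha f$ are mutually orthogonal in $L^2(d\mu_\alpha)$, Parseval gives
\begin{equation*}
\int_{\R}|\partial_t T_t^\alpha f(x)|^2\,d\mu_\alpha(x) = \sum_{n\ge 0}(\lambda_n^\alpha)^2 e^{-2t\lambda_n^\alpha}\|\mathcal{P}_n^\alpha f\|_{L^2(d\mu_\alpha)}^2.
\end{equation*}
Multiplying by $t$, integrating over $t\in(0,\infty)$, and applying Tonelli's theorem reduces matters to the elementary identity $\int_0^\infty t e^{-2t\lambda_n^\alpha}\,dt=(2\lambda_n^\alpha)^{-2}$, which is valid because $\lambda_n^\alpha\ge 2|\alpha|+2d=\sum_i(2\alpha_i+2)>0$. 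The $(\lambda_n^\alpha)^2$ factors cancel, leaving $\frac{1}{4}\sum_n\|\mathcal{P}_n^\alpha f\|^2=\frac{1}{4}\|f\|_{L^2(d\mu_\alpha)}^2$, and taking square roots yields the claim.

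\textbf{Remaining cases.} For $g_{V,\widetilde T}^j$ the same computation goes through verbatim, the only change being that $\mathcal{P}_n^\alpha$ is replaced by $\mathcal{P}_n^{\alpha,j}$ (cf.\ \eqref{tfal}); orthogonality of these projections in $L^2(d\mu_\alpha)$ is exactly the statement that $\{x_j\ell_{k-e_j}^{\alpha+e_j}\}$ is an orthonormal basis, noted in Section \ref{pre}. For the Poisson cases $g_{V,P}$ and $g_{V,\widetilde P}^j$, the spectral formulas yield $\partial_t P_t^\alpha f=-\sum_n\sqrt{\lambda_n^\alpha}\,e^{-t\sqrt{\lambda_n^\alpha}}\mathcal{P}_n^\alpha f$ (and analogously with $\mathcal{P}_n^{\alpha,j}$), so Parseval and Tonelli now reduce the problem to $\int_0^\infty t e^{-2t\sqrt{\lambda_n^\alpha}}\,dt=(2\sqrt{\lambda_n^\alpha})^{-2}=1/(4\lambda_n^\alpha)$; multiplying by the prefactor $\lambda_n^\alpha$ again yields the constant $1/4$, independent of $n$, and the argument closes.

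\textbf{Expected obstacle.} There is no serious analytic obstacle: the whole argument is an exercise in spectral calculus, and the uniform cancellation between the prefactors $(\lambda_n^\alpha)^2$ (resp.\ $\lambda_n^\alpha$) and the scalar integrals is what makes the constant $1/2$ appear. The only small point requiring care is the termwise differentiation under the sum defining $T_t^\alpha f$ and $P_t^\alpha f$; this is justified by the fast decay of $\lambda_n^\alpha e^{-t\lambda_n^\alpha}$ (resp.\ $\sqrt{\lambda_n^\alpha}\,e^{-t\sqrt{\lambda_n^\alpha}}$) for each fixed $t>0$ and the Bessel-type bound $\|\mathcal{P}_n^\alpha f\|\le\|f\|$, and the positivity $\lambda_n^\alpha>0$ which secures integrability near $t=\infty$ and $t=0$.
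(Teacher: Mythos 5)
Your proof is correct and follows essentially the same route as the paper: termwise differentiation of the spectral series (justified by the decay estimates), Parseval's identity together with Fubini--Tonelli, and evaluation of the elementary $t$-integral $\int_0^\infty t e^{-2t\lambda}\,dt = (2\lambda)^{-2}$, with the prefactors $(\lambda_n^\alpha)^2$ (heat) or $\lambda_n^\alpha$ (Poisson) cancelling to give the universal constant $1/4$. The paper carries out only the $g_{V,T}$ case in detail and leaves the rest to the reader; you fill in that the $\widetilde T$ case uses orthonormality of $\{x_j\ell_{k-e_j}^{\alpha+e_j}\}$ and that the Poisson exponent $e^{-t\sqrt{\lambda_n^\alpha}}$ leads to the same cancellation, which is exactly what the author intends.
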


\begin {proof}
Treatment of each of the square functions is based on similar arguments. Therefore we deal only with $g_{V,T}$, leaving the remaining cases to the reader.

Differentiating term by term the series in \eqref{pc} (this is legitimate in view of \eqref{lka}) we get
$$
\partial_{t}T_{t}^{\a}f(x)=
-\sum_{n=0}^{\infty}\ll_{n}^{\a}e^{-t\ll_{n}^{\a}}\sum_{|k|=n}\langle f,\ell_{k}^{\a}\rangle_{d\mu_{\a}}\ell_{k}^{\a}(x).
$$
Applying the Fubini-Tonelli theorem and Parseval's identity we obtain
\begin{align*}
\|g_{V,T}(f)\|_{L^2(d\mu_{\a})}^{2}
=&
\int_{0}^{\infty}t\int_{\R}\big|\partial_{t}T_{t}^{\a}f(x)\big|^2\,d\mu_{\a}(x)\,dt\\
=&
\sum_{n=0}^{\infty}\bigg(\int_{0}^{\infty}t\,e^{-2t\ll_{n}^{\a}}\,dt\bigg)(\ll_{n}^{\a})^2
\sum_{|k|=n}|\langle f,\ell_{k}^{\a}\rangle_{d\mu_{\a}}|^2\\
=&
\frac{1}{4}\|f\|_{L^2(d\mu_{\a})}^2.
\end{align*}
The conclusion follows.
\end {proof}

\begin{pro}\label{ogrh}
Assume that $\alpha\in (-1,\infty)^{d}$. The horizontal square functions satisfy
\begin{displaymath}
\begin {array}{lll}
\Big\|\big|\big(g_{H,T}^{1}(f),\ldots,g_{H,T}^{d}(f)\big)\big|_{\ell^2}\Big\|_{L^2(d\mu_{\a})}
&\simeq
\|f\|_{L^2(d\mu_{\a})},&\qquad
f\in\{\ell_{(0,\ldots,0)}^{\a}\}^{\perp}\subset L^2(d\mu_{\a}),\\
\Big\|\big|\big(g_{H,P}^{1}(f),\ldots,g_{H,P}^{d}(f)\big)\big|_{\ell^2}\Big\|_{L^2(d\mu_{\a})}
&\simeq
\|f\|_{L^2(d\mu_{\a})},&\qquad
f\in\{\ell_{(0,\ldots,0)}^{\a}\}^{\perp}\subset L^2(d\mu_{\a}),\\
\end {array}
\end{displaymath}
and for each $j=1,\ldots,d$,
\begin{displaymath}
\begin {array}{lll}
\Big\|\big|\big(g_{H,\widetilde T}^{j,1}(f),\ldots,g_{H,\widetilde T}^{j,d}(f)\big)\big|_{\ell^2}\Big\|_{L^2(d\mu_{\a})}
&\simeq
\|f\|_{L^2(d\mu_{\a})},&\qquad
f\in L^2(d\mu_{\a}),\\
\Big\|\big|\big(g_{H,\widetilde P}^{j,1}(f),\ldots,g_{H,\widetilde P}^{j,d}(f)\big)\big|_{\ell^2}\Big\|_{L^2(d\mu_{\a})}
&\simeq
\|f\|_{L^2(d\mu_{\a})},&\qquad
f\in L^2(d\mu_{\a}).
\end {array}
\end{displaymath}
\end{pro}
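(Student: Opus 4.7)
The plan is to reduce the four statements to a single spectral computation. For any of the listed horizontal $g$-functions, I would first apply Fubini--Tonelli to interchange the spatial integration against $d\mu_\alpha$ with the $L^2(dt)$ (or $L^2(tdt)$) norm, so that $\|g^i(f)\|_{L^2(d\mu_\alpha)}^2$ becomes a $t$-integral of the inner product $\langle \delta_i^*\delta_i S_tf,\,S_tf\rangle_{d\mu_\alpha}$ (with $\delta_j\delta_j^*$ in place of $\delta_j^*\delta_j$ when $i=j$ in the modified case), $S_t$ being the relevant semigroup. Summing over $i$ diagonalises the quadratic form and reduces the matter to estimating a ratio of eigenvalues.

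For the heat and Poisson cases based on $L_\alpha$, the given identity $\sum_i\delta_i^*\delta_i = L_\alpha-(2|\alpha|+2d)$ yields
\begin{align*}
\sum_{i=1}^d\|g_{H,T}^i(f)\|_{L^2(d\mu_\alpha)}^2
=\int_0^\infty\langle(\mathcal{L}_\alpha-2|\alpha|-2d)T_t^\alpha f,\,T_t^\alpha f\rangle_{d\mu_\alpha}\,dt.
\end{align*}
Inserting the spectral expansion \eqref{pc} and applying Parseval's identity (in the style of the proof of Proposition \ref{ogrv}) turns this into $\sum_{n\ge 1}\tfrac{2n}{\lambda_n^\alpha}\|\mathcal{P}_n^\alpha f\|_{L^2(d\mu_\alpha)}^2$; the weight $2n/\lambda_n^\alpha=2n/(4n+2|\alpha|+2d)$ lies in a compact subinterval of $(0,1/2]$ as $n$ ranges over $\N_{\ge 1}$, so this sum is comparable to $\|f\|_{L^2(d\mu_\alpha)}^2$ precisely when $\mathcal{P}_0^\alpha f=0$, i.e.\ when $f\perp\ell_{(0,\ldots,0)}^\alpha$. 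The Poisson variant is identical except that the $t$-integral produces $1/(4\lambda_n^\alpha)$ in place of $1/(2\lambda_n^\alpha)$, giving the weight $n/\lambda_n^\alpha$ with the same qualitative features.

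For the modified cases I would first establish the analogous decomposition
\begin{align*}
M_j^\alpha-(2|\alpha|+2d)=\sum_{i\neq j}\delta_i^*\delta_i+\delta_j\delta_j^*,
\end{align*}
which is an immediate consequence of the definition of $M_j^\alpha$ together with the commutator identity $[\delta_j,\delta_j^*]=2+(2\alpha_j+1)/x_j^2$; the latter is a short direct computation from the formulas for $\delta_j$ and $\delta_j^*$. Since the spectral resolution \eqref{tfal} is supported on the orthonormal basis $\{x_j\ell_{k-e_j}^{\alpha+e_j}:k_j\ge 1\}$ and the convention $\ell_{-e_j}^{\alpha+e_j}=0$ forces $\mathcal{P}_0^{\alpha,j}\equiv 0$, the same Fubini--Parseval mechanism gives
\begin{align*}
\sum_{i=1}^d\|g_{H,\widetilde T}^{j,i}(f)\|_{L^2(d\mu_\alpha)}^2\simeq\sum_{n\ge 1}\|\mathcal{P}_n^{\alpha,j}f\|_{L^2(d\mu_\alpha)}^2=\|f\|_{L^2(d\mu_\alpha)}^2,
\end{align*}
with no orthogonality restriction on $f$, and the analogous Poisson version goes through verbatim. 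The only non-routine step is verifying the commutator identity and hence the decomposition of $M_j^\alpha$; beyond that I anticipate no real obstacle, the remainder being bookkeeping essentially identical to that of Proposition \ref{ogrv}.
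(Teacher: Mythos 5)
Your argument is correct, and it reaches the same spectral sums as the paper, but by a noticeably different route. The paper's proof works on each index $i$ separately: it applies the ladder identities \eqref{opdel} to write $\delta_i\widetilde P_t^{\alpha,j}f$ (resp.\ $\delta_j^*\widetilde P_t^{\alpha,j}f$) as an explicit series in the orthonormal differentiated Laguerre systems, computes $\|g^{j,i}(f)\|_{L^2(d\mu_\alpha)}^2$ via Parseval for each $i$ individually, and only afterwards sums over $i$ to diagonalize the weight $k_i\mapsto|k|$. You instead sum over $i$ \emph{before} passing to spectral data: you use the operator-level decomposition $\sum_i\delta_i^*\delta_i=L_\alpha-(2|\alpha|+2d)$ directly for the unmodified semigroups, and for the modified ones you derive the analogue $M_j^\alpha-(2|\alpha|+2d)=\sum_{i\ne j}\delta_i^*\delta_i+\delta_j\delta_j^*$ via the commutator $[\delta_j,\delta_j^*]=2+(2\alpha_j+1)/x_j^2$ (which checks out) and the definition $M_j^\alpha=L_\alpha+(2\alpha_j+1)/x_j^2+2$, then invoke the spectral theorem. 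Both proofs hinge on the same mechanism, namely that the summed quadratic form diagonalizes with eigenvalue $\lambda_n^\alpha-2|\alpha|-2d=4n$ on the $n$th eigenspace and that the resulting weight $n/\lambda_n^\alpha$ is bounded above and below for $n\ge1$.

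Your route buys a cleaner bookkeeping: it does not require writing out the ladder series term by term and avoids appealing to orthonormality of the twice-differentiated systems $\{x_ix_j\ell_{k-e_i-e_j}^{\alpha+e_i+e_j}\}$. What it costs is a slightly more functional-analytic flavor — you need the adjoint relation $\|\delta_i S_tf\|^2=\langle\delta_i^*\delta_i S_tf,S_tf\rangle$ (and similarly with $\delta_j,\delta_j^*$ exchanged), i.e.\ that $\delta_i^*$ is a genuine, not merely formal, adjoint on the range of the semigroup; this is routine because $S_tf$ is smooth and rapidly decaying for $t>0$, but worth a sentence. Also, the paper's componentwise computation incidentally shows what the mixed weights $k_i/\lambda_n^\alpha$ look like before summation, information your argument skips; for the stated proposition that extra detail is not needed. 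In short: correct proof, same spectral endpoint, reached by an operator-decomposition shortcut rather than the paper's basis-by-basis computation.
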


\begin {proof}
We give a detailed justification only for the last of the four stated relations. The remaining relations are proved in a similar manner.

Taking into account \eqref{pp}, \eqref{lka} and the identities (cf. \cite[(4.4)]{NS1})
\begin{align}\label{opdel}
\delta_{j}\ell_{k}^{\a}=-2\sqrt{k_{j}}x_{j}\ell_{k-e_{j}}^{\a+e_{j}},\qquad 
\delta_{j}^{*}(x_{j}\ell_{k-e_{j}}^{\a+e_{j}})=-2\sqrt{k_{j}}\ell_{k}^{\a},
\end{align}
we get
\begin{align*}
\delta_{i}\widetilde P_{t}^{\a,j}f(x)
=&
-2\sum_{n=0}^{\infty}e^{-t\sqrt{\ll_{n}^{\a}}}\sum_{|k|=n}\sqrt{k_{i}}\langle f,x_{j}\ell_{k-e_{j}}^{\a+e_{j}}\rangle_{d\mu_{\a}} x_{i}x_{j}\ell_{k-e_{j}-e_{i}}^{\a+e_{j}+e_{i}}(x),\qquad i\ne j,\\
\delta_{j}^{*}\widetilde P_{t}^{\a,j}f(x)
=&
-2\sum_{n=0}^{\infty}e^{-t\sqrt{\ll_{n}^{\a}}}\sum_{|k|=n}\sqrt{k_{j}}\langle f,x_{j}\ell_{k-e_{j}}^{\a+e_{j}}\rangle_{d\mu_{\a}}\ell_{k}^{\a}(x).
\end{align*}
Using these relations, the Fubini-Tonelli theorem, the fact that each of the systems $\{x_{i}x_{j}\ell_{k-e_{j}-e_{i}}^{\a+e_{j}+e_{i}} : k \in \N^d\}$, $i\ne j$ (with null functions excluded) is an orthonormal basis in $L^2(d\mu_{\a})$ and Parseval's identity we obtain for $i\ne j$
\begin{align*}
\|g_{H,\widetilde P}^{j,i}(f)\|^{2}_{L^2(d\mu_{\a})}
=&
\int_{0}^{\infty}t\int_{\R}\big|\delta_{i}\widetilde P_{t}^{\a,j}f(x)\big|^2\,d\mu_{\a}(x)\,dt\\
=&
4\sum_{n=0}^{\infty}\bigg(\int_{0}^{\infty}t\,e^{-2t\sqrt{\ll_{n}^{\a}}}\,dt\bigg)
\sum_{|k|=n}k_{i}|\langle f,x_{j}\ell_{k-e_{j}}^{\a+e_{j}}\rangle_{d\mu_{\a}}|^2\\
=&
\sum_{n=0}^{\infty}\frac{1}{\ll_{n}^{\a}}\sum_{|k|=n}k_{i}|\langle f,x_{j}\ell_{k-e_{j}}^{\a+e_{j}}\rangle_{d\mu_{\a}}|^2.
\end{align*}
Parallel arguments lead to 
\begin{align*}
\|g_{H,\widetilde P}^{j,j}(f)\|^{2}_{L^2(d\mu_{\a})}
=&
\sum_{n=0}^{\infty}\frac{1}{\ll_{n}^{\a}}\sum_{|k|=n}k_{j}|\langle f,x_{j}\ell_{k-e_{j}}^{\a+e_{j}}\rangle_{d\mu_{\a}}|^2.
\end{align*}
Combining these results we see that
\begin{align*}
\Big\|\big|\big(g_{H,\widetilde P}^{j,1}(f),\ldots,g_{H,\widetilde P}^{j,d}(f)\big)\big|_{\ell_{2}}\Big\|_{L^2(d\mu_{\a})}
=&
\sum_{n=0}^{\infty}\sum_{|k|=n}\frac{|k|}{\ll_{n}^{\a}}|\langle f,x_{j}\ell_{k-e_{j}}^{\a+e_{j}}\rangle_{d\mu_{\a}}|^2\\
=&
\sum_{n=1}^{\infty}\frac{n}{\ll_{n}^{\a}}\sum_{|k|=n}|\langle f,x_{j}\ell_{k-e_{j}}^{\a+e_{j}}\rangle_{d\mu_{\a}}|^2
\simeq
\|f\|^{2}_{L^2(d\mu_{\a})};
\end{align*}
the last relation is due to the fact that $\{x_{j}\ell_{k}^{\a+e_{j}} : k\in\N^{d}\}$ is an orthonormal basis in $L^2(d\mu_{\a})$. The proof is finished. 
\end {proof}

Next we prove Proposition \ref{prestowP}, that is the $g$-functions under consideration, viewed as vector-valued linear operators, are indeed associated with the relevant kernels. We adapt essentially the reasoning from \cite[Section 2]{StTo2} applied in the setting of Hermite function expansions, taking opportunity to introduce some simplifications. Treatment of each of the $g$-functions relies on similar arguments, therefore we give detailed proofs only in the two representative cases of $g_{V,P}$ and $g_{H,\widetilde T}^{j,j}$, leaving the remaining cases to the reader. When dealing with kernels with non-integrable singularities, applying Fubini's theorem or exchanging integration with differentiation is a delicate matter. Therefore below and also in Section \ref{ker} we provide fairly detailed explanations in the relevant places.

\begin {proof}[Proof of Proposition \ref{prestowP}; the case of $g_{V,P}$.]
By density arguments it suffices to show that
\begin{align}\label{s1}
\Big\langle\big\{\partial_{t}P_{t}^{\a} f\big\}_{t>0},h\Big\rangle_{L^2(\R,L^2(tdt))}
=
\bigg\langle\int_{\R}\big\{\partial_{t}P_{t}^{\a}(x,y)\big\}_{t>0}f(y)\,d\mu_{\a}(y),h\bigg\rangle_{L^2(\R,L^2(tdt))}
\end{align}
for every $f\in C^{\infty}_c(\R)$ and $h(x,t)=h_{1}(x)h_{2}(t)$, where $h_{1}\in C^{\infty}_c(\R)$,  $h_{2}\in C^{\infty}_c(\mathbb{R}_{+})$ and $\supp f\cap \supp h_{1}=\emptyset$ (the linear span of functions $h$ of this form is dense in $L^2\big((\supp f)^{C},d\mu_{\a}\otimes tdt\big)$). We start by considering the left-hand side of \eqref{s1},
\begin{align*}
\Big\langle\big\{&\partial_{t}P_{t}^{\a} f(x)\big\}_{t>0},h\Big\rangle_{L^2(\R,L^2(tdt))}\\
=&
\int_{0}^{\infty}t\overline{h_{2}(t)}\int_{\R}
\partial_{t}P_{t}^{\a}f(x)\overline{h_{1}(x)}\,d\mu_{\a}(x)\,dt\\
=&
\int_{0}^{\infty}t\overline{h_{2}(t)}\int_{\R}\bigg(-\sum_{n=0}^{\infty}\sqrt{\ll_{n}^{\a}}\,e^{-t\sqrt{\ll_{n}^{\a}}}\sum_{|k|=n}
\langle f,\ell_{k}^{\a}\rangle_{d\mu_{\a}}\ell_{k}^{\a}(x)\bigg)\overline{h_{1}(x)}\,d\mu_{\a}(x)\,dt\\
=&
-\int_{0}^{\infty}t\overline{h_{2}(t)}\sum_{n=0}^{\infty}\sqrt{\ll_{n}^{\a}}\,e^{-t\sqrt{\ll_{n}^{\a}}}\sum_{|k|=n}
\langle f,\ell_{k}^{\a}\rangle_{d\mu_{\a}}\overline{\langle h_{1},\ell_{k}^{\a}\rangle}_{d\mu_{\a}}\,dt.
\end{align*}
The change of order of integration in the first identity is justified by the Fubini theorem since
\begin{align*}
\int_{\R}\int_{0}^{\infty}t\big|\partial_{t}P_{t}^{\a}f(x)\big||h_{1}(x)h_{2}(t)|\,dt\,d\mu_{\a}(x)
\leq&
\,\big\|\partial_{t}P_{t}^{\a}f\big\|_{L^2(d\mu_{\a}\otimes tdt)}\|h_{1}\|_{L^2(d\mu_{\a})}\|h_{2}\|_{L^2(tdt)},
\end{align*} 
and the right-hand side here is finite because $f\mapsto \partial_{t}P_{t}^{\a}f$ is bounded from $L^2(d\mu_{\a})$ into $L^2(\R,L^2(tdt))$, see Proposition \ref{ogrv}. The second equality is obtained by exchanging the order of $\partial_{t}$ and $\sum$, and this is valid in view of \eqref{lka}.
The third identity is also a consequence of the Fubini theorem, and its application is legitimate since for $t>0$ 
\begin{align*} 
\int_{\R}&\bigg(\sum_{n=0}^{\infty}\sqrt{\ll_{n}^{\a}}\,e^{-t\sqrt{\ll_{n}^{\a}}}\sum_{|k|=n}
|\langle f,\ell_{k}^{\a}\rangle_{d\mu_{\a}}||\ell_{k}^{\a}(x)||h_{1}(x)|\bigg)\,d\mu_{\a}(x)\\
\lesssim&
\|f\|_{L^2(d\mu_{\a})}\sum_{n=0}^{\infty}\,e^{-t\sqrt{n}}\sum_{|k|=n}\int_{\R}|\ell_{k}^{\a}(x)h_{1}(x)|\,d\mu_{\a}(x)\\
\leq&
\|f\|_{L^2(d\mu_{\a})}\|h_{1}\|_{L^2(d\mu_{\a})}\sum_{n=0}^{\infty}e^{-t\sqrt{n}}(n+1)^{d}<\infty.
\end{align*} 

Now we focus on the right-hand side of \eqref{s1}. Interchanging the order of integrals we get
\begin{align*}
\bigg\langle&\int_{\R}\big\{\partial_{t}P_{t}^{\a}(x,y)\big\}_{t>0}f(y)\,d\mu_{\a}(y),h\bigg\rangle_{L^2(\R,L^2(tdt))}\\
=&
\int_{0}^{\infty}t\overline{h_{2}(t)}\int_{\R}\int_{\R}\partial_{t}P_{t}^{\a}(x,y)f(y)
\overline{h_{1}(x)}\,d\mu_{\a}(y)\,d\mu_{\a}(x)\,dt.
\end{align*}
Here application of the Fubini theorem is possible since
\begin{align*}
\int_{\R}&\int_{\R}\int_{0}^{\infty}t\big|\partial_{t}P_{t}^{\a}(x,y)f(y)\big||h_{1}(x)h_{2}(t)|
\,dt\,d\mu_{\a}(y)\,d\mu_{\a}(x)\\
\leq&
\|f\|_{\infty}\|h_{1}\|_{\infty}\|h_{2}\|_{L^2(tdt)}
\int_{\supp h_{1}}\int_{\supp f}\big\|\partial_{t}P_{t}^{\a}(x,y)\big\|_{L^2(tdt)}
\,d\mu_{\a}(y)\,d\mu_{\a}(x)\\
\lesssim&
\int_{\supp h_{1}}\int_{\supp f}
\frac{1}{\mu_{\alpha}(B(x,|y-x|))}\,d\mu_{\a}(y)\,d\mu_{\a}(x)<\infty,
\end{align*}
where we made use of the growth estimate for the kernel $\big\{\partial_{t}P_{t}^{\a}(x,y)\big\}$ proved in Section \ref{ker} below and the fact that the supports of $f$ and $h_{1}$ are disjoint and bounded.
Next, by the definition of $P_{t}^{\a}(x,y)$ and again Fubini's theorem we obtain
\begin{align*}
\int_{\R}&\int_{\R}\partial_{t}P_{t}^{\a}(x,y)f(y)
\overline{h_{1}(x)}\,d\mu_{\a}(y)\,d\mu_{\a}(x)\\
=&
\int_{\R}\int_{\R}\bigg(-\sum_{n=0}^{\infty}\sqrt{\ll_{n}^{\a}}\,e^{-t\sqrt{\ll_{n}^{\a}}}
\sum_{|k|=n}\ell_{k}^{\a}(x)\ell_{k}^{\a}(y)\bigg)f(y)\overline{h_{1}(x)}\,d\mu_{\a}(y)\,d\mu_{\a}(x)\\
=&
-\sum_{n=0}^{\infty}\sqrt{\ll_{n}^{\a}}\,e^{-t\sqrt{\ll_{n}^{\a}}}\sum_{|k|=n}
\langle f,\ell_{k}^{\a}\rangle_{d\mu_{\a}}\overline{\langle h_{1},\ell_{k}^{\a}\rangle}_{d\mu_{\a}}.
\end{align*}
Here the first identity is justified with the aid of \eqref{lka}. Application of Fubini's theorem in the second identity is legitimate since, with $t>0$ fixed,
\begin{align*}
\int_{\R}&\int_{\R}\bigg(\sum_{n=0}^{\infty}\sqrt{\ll_{n}^{\a}}\,e^{-t\sqrt{\ll_{n}^{\a}}}
\sum_{|k|=n}|\ell_{k}^{\a}(x)||\ell_{k}^{\a}(y)|\bigg)|f(y)h_{1}(x)|\,d\mu_{\a}(y)\,d\mu_{\a}(x)\\
\lesssim&
\|f\|_{L^2(d\mu_{\a})}\|h_{1}\|_{L^2(d\mu_{\a})}\sum_{n=0}^{\infty}e^{-t\sqrt{n}}(n+1)^{d}<\infty.
\end{align*}
Integrating the last identities against $\overline{h_{2}(t)}t\,dt$ we finally see that both sides of \eqref{s1} coincide.
\end {proof}

\begin {proof}[Proof of Proposition \ref{prestowP}; the case of $g_{H,\widetilde T}^{j,j}$.]
Density arguments reduce our task to showing that
\begin{align}\label{s2}
\Big\langle\big\{\delta_{j}^{*}\widetilde T_{t}^{\a,j} f\big\}_{t>0},h\Big\rangle_{L^2(\R,L^2(dt))}
=
\bigg\langle\int_{\R}\big\{\delta_{j}^{*}\widetilde G_{t}^{\a,j}(x,y)\big\}_{t>0}f(y)\,d\mu_{\a}(y),h\bigg\rangle_{L^2(\R,L^2(dt))}
\end{align}
for every $f\in C^{\infty}_c(\R)$ and $h(x,t)=h_{1}(x)h_{2}(t)$, where $h_{1}\in C^{\infty}_c(\R)$,  $h_{2}\in C^{\infty}_c(\mathbb{R}_{+})$ and $\supp f\cap \supp h_{1}=\emptyset$. We first deal with the left-hand side of \eqref{s2},
\begin{align*}
\Big\langle\big\{&\delta_{j}^{*}\widetilde T_{t}^{\a,j} f\big\}_{t>0},h\Big\rangle_{L^2(\R,L^2(dt))}\\
=&
\int_{0}^{\infty}\overline{h_{2}(t)}\int_{\R}\delta_{j}^{*}\widetilde T_{t}^{\a,j}f(x)\overline{h_{1}(x)}\,d\mu_{\a}(x)\,dt\\
=&
\int_{0}^{\infty}\overline{h_{2}(t)}\int_{\R}\bigg(-2\sum_{n=0}^{\infty}e^{-t\ll_{n}^{\a}}\sum_{|k|=n}
\sqrt{k_{j}}\langle f,x_{j}\ell_{k-e_{j}}^{\a+e_{j}}\rangle_{d\mu_{\a}}\ell_{k}^{\a}(x)\bigg)\overline{h_{1}(x)}
\,d\mu_{\a}(x)\,dt\\
=&
-2\int_{0}^{\infty}\overline{h_{2}(t)}\sum_{n=0}^{\infty}e^{-t\ll_{n}^{\a}}\sum_{|k|=n}\sqrt{k_{j}}\langle f,x_{j}\ell_{k-e_{j}}^{\a+e_{j}}\rangle_{d\mu_{\a}}
\overline{\langle h_{1},\ell_{k}^{\a}\rangle}_{d\mu_{\a}}\,dt.
\end{align*}
Changing the order of integrals in the first identity is justified by Fubini's theorem since
\begin{align*}
\int_{\R}\int_{0}^{\infty}\big|\delta_{j}^{*}\widetilde T_{t}^{\a,j}f(x)\big||h_{1}(x)h_{2}(t)|\,dt\,d\mu_{\a}(x)
\leq&
\big\|\delta_{j}^{*}\widetilde T_{t}^{\a,j}f\big\|_{L^2(d\mu_{\a}\otimes dt)}
\|h_{1}\|_{L^2(d\mu_{\a})}\|h_{2}\|_{L^2(dt)},
\end{align*}
and the right-hand side here is finite because $f\mapsto \delta_{j}^{*}\widetilde T_{t}^{\a,j}f$ is bounded from $L^2(d\mu_{\a})$ into $L^2(\R,L^2(dt))$, see Proposition \ref{ogrh}. The second equality is obtained with the aid of \eqref{opdel}, by differentiating the series \eqref{tfal} term by term (this is legitimate in view of \eqref{lka}).
The third identity is also a consequence of Fubini's theorem, and its application is valid since for a fixed $t>0$ 
\begin{align*}
\int_{\R}&\bigg(\sum_{n=0}^{\infty}e^{-t\ll_{n}^{\a}}\sum_{|k|=n}\sqrt{k_{j}}
\big|\langle f,x_{j}l_{k-e_{j}}^{\a+e_{j}}\rangle_{d\mu_{\a}}\big||\ell_{k}^{\a}(x)||h_{1}(x)|\bigg)\,d\mu_{\a}(x)\\
\lesssim&
\|f\|_{L^2(d\mu_{\a})}\|h_{1}\|_{L^2(d\mu_{\a})}
\sum_{n=0}^{\infty}e^{-tn}(n+1)^{d}<\infty.
\end{align*}

Now we consider the right-hand side of \eqref{s2}. Using the Fubini theorem we see that
\begin{align*}
\bigg\langle &\int_{\R}\big\{\delta_{j}^{*}\widetilde G_{t}^{\a,j}(x,y)\big\}_{t>0}f(y)\,d\mu_{\a}(y),h\bigg\rangle_{L^2(\R,L^2(dt))}\\
=&
\int_{0}^{\infty}\overline{h_{2}(t)}\int_{\R}\int_{\R}\delta_{j}^{*}\widetilde G_{t}^{\a,j}(x,y)f(y)
\overline{h_{1}(x)}\,d\mu_{\a}(y)\,d\mu_{\a}(x)\,dt.
\end{align*}
Application of Fubini's theorem is legitimate since
\begin{align*}
\int_{\R}&\int_{\R}\int_{0}^{\infty}\big|\delta_{j}^{*}\widetilde G_{t}^{\a,j}(x,y)f(y)\big||h_{1}(x)h_{2}(t)|
\,dt\,d\mu_{\a}(y)\,d\mu_{\a}(x)\\
\leq&
\|f\|_{\infty}\|h_{1}\|_{\infty}\|h_{2}\|_{L^2(dt)}
\int_{\supp h_{1}}\int_{\supp f}\big\|\delta_{j}^{*}\widetilde G_{t}^{\a,j}(x,y)\big\|_{L^2(dt)}\,d\mu_{\a}(y)\,d\mu_{\a}(x)\\
\lesssim&
\int_{\supp h_{1}}\int_{\supp f}
\frac{1}{\mu_{\alpha}(B(x,|y-x|))}\,d\mu_{\a}(y)\,d\mu_{\a}(x)<\infty;
\end{align*}
here we made use of the growth estimate for the kernel $\big\{\delta_{j}^{*}\widetilde G_{t}^{\a,j}(x,y)\big\}$ (see Theorem \ref{preg}) and the fact that $\supp f$ and $\supp h_{1}$ are disjoint and bounded.
Next, using \eqref{gfal}, \eqref{opdel} and Fubini's theorem we get
\begin{align*}
\int_{\R}&\int_{\R}\delta_{j}^{*}\widetilde G_{t}^{\a,j}(x,y)f(y)\overline{h_{1}(x)}\,d\mu_{\a}(y)\,d\mu_{\a}(x)\\
=&
\int_{\R}\int_{\R}\bigg(-2\sum_{n=0}^{\infty}e^{-t\ll_{n}^{\a}}\sum_{|k|=n}
\sqrt{k_{j}}\ell_{k}^{\a}(x)y_{j}\ell_{k-e_{j}}^{\a+e_{j}}(y)\bigg)f(y)
\overline{h_{1}(x)}\,d\mu_{\a}(y)\,d\mu_{\a}(x)\\
=&
-2\sum_{n=0}^{\infty}e^{-t\ll_{n}^{\a}}\sum_{|k|=n}\sqrt{k_{j}}
\langle f,x_{j}\ell_{k-e_{j}}^{\a+e_{j}}\rangle_{d\mu_{\a}}\overline{\langle h_{1},\ell_{k}^{\a}\rangle}_{d\mu_{\a}}.
\end{align*}
Here the first identity follows with the aid of \eqref{lka}.
Application of the Fubini theorem in the second identity is justified because
\begin{align*}
\int_{\R}&\int_{\R}\bigg(\sum_{n=0}^{\infty}e^{-t\ll_{n}^{\a}}\sum_{|k|=n}\sqrt{k_{j}}
|\ell_{k}^{\a}(x)|\big|y_{j}\ell_{k-e_{j}}^{\a+e_{j}}(y)\big|\bigg)|f(y)h_{1}(x)|\,d\mu_{\a}(y)\,d\mu_{\a}(x)\\
\lesssim&
\|f\|_{L^2(d\mu_{\a})}\|h_{1}\|_{L^2(d\mu_{\a})}\sum_{n=0}^{\infty}e^{-tn}(n+1)^{d}<\infty.
\end{align*}
Integrating the last identities against $\overline{h_{2}(t)}\,dt$ we get \eqref{s2}, as desired.
\end {proof}

\section{Kernel estimates}\label{ker}
\setcounter{equation}{0}
This section delivers proofs of the relevant kernel estimates for all considered square functions. Our methods are similar to those applied in \cite{NS1} and are based on Schl\"afli's integral representation for the modified Bessel function $I_{\nu}$ involved in the Laguerre heat kernel. Here, however, the technicalities are more sophisticated than in \cite{NS1} since now we are dealing with vector-valued kernels. From now on we always assume that $\a\in[-1\slash 2,\infty)^{d}$.

The Laguerre heat kernel $G_{t}^{\alpha}(x,y)$ may be expressed as, see  \cite[Section 5]{NS1},
\begin{equation}\label{gie}
G_{t}^{\alpha}(x,y)=\Big(\frac{1-\zeta^{2}}{2\zeta}\Big)^{d+|\alpha|}\int_{[-1,1]^{d}}\exp\Big(-{\frac{1}{4\zeta}}q_{+}(x,y,s)-\frac{\zeta}{4}q_{-}(x,y,s)\Big)\Pi_{\alpha}(ds),
\end{equation}
where 
\begin{align*}
q_{\pm}(x,y,s)=
|x|^2+|y|^2\pm 2\sum_{i=1}^{d}x_{i}y_{i}s_{i}
\end{align*}
and $\zeta=\tanh t$, $t>0$; equivalently
\begin{equation}\label{zz}
t=t(\zeta)=\frac{1}{2}\log\frac{1+\zeta}{1-\zeta},\qquad \zeta \in (0,1).
\end{equation}
The measure $\Pi_{\a}$ is a product of one-dimensional measures, $\Pi_{\a}=\bigotimes_{i=1}^{d}\Pi_{\a_{i}}$, where $\Pi_{\a_{i}}$ is given by the density
$$
\Pi_{\a_{i}}(ds_{i}) = \frac{(1-s_{i}^2)^{\a_{i}-1\slash 2}ds_{i}}{\sqrt{\pi} 2^{\a_{i}}\Gamma{(\a_{i}+1\slash 2)}},
    \qquad \a_{i} > -1\slash 2,
$$
and in the limiting case of $\a_{i}=-1\slash 2$, $\Pi_{-1\slash 2}=\big( \eta_{-1} + \eta_1 \big)\slash \sqrt{2\pi}$, with $\eta_{-1}$ and $\eta_{1}$ denoting point masses at $-1$ and $1$, respectively.

Recall that the kernels of the modified Laguerre semigroups are given by
\begin{equation}\label{falka}
\widetilde{G}^{\alpha,j}_t(x,y)= e^{-2t}x_jy_jG^{\alpha+e_j}_t(x,y),\qquad j=1,\ldots,d.
\end{equation}

To estimate expressions related to $G_{t}^{\alpha}(x,y)$ or $\widetilde{G}^{\alpha,j}_t(x,y)$ we will use several technical lemmas, which are gathered below. Some of the results were obtained elsewhere, but we state them anyway for the sake of completeness and reader's convenience.

To begin with, notice that
\begin{equation}\label{cal}
\int_{0}^{1}\zeta^{-1}\log\frac{1+\zeta}{1-\zeta}\,d\zeta < \infty \qquad \textrm{and}\qquad
\int_{0}^{1}\log\frac{1+\zeta}{1-\zeta}\,d\zeta < \infty ;
\end{equation}
this can be easily seen from the asymptotics
$$
\log\frac{1+\zeta}{1-\zeta}\sim \zeta,\quad\zeta\to 0^{+}
\qquad \textrm{and}\qquad
\log\frac{1+\zeta}{1-\zeta}\sim -\log(1-\zeta),\quad\zeta\to 1^{-}.
$$

\begin{lem}\label{obs}
Let $x,y\in\R$, $s\in[-1,1]^{d}$. Then for any $j=1,\ldots,d,$ we have
$$
|x_{j}\pm y_{j}s_{j}|\leq \sqrt{q_{\pm}(x,y,s)}\qquad \textrm{and}\qquad
|y_{j}\pm x_{j}s_{j}|\leq \sqrt{q_{\pm}(x,y,s)}.
$$
\end{lem}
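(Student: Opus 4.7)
The plan is to prove all four inequalities by an elementary completion-of-squares identity for $q_{\pm}(x,y,s)$. Since everything in sight is nonnegative (we are in $\mathbb{R}_{+}^{d}$ and $|s_{i}|\le 1$), it suffices to check that $(x_{j}\pm y_{j}s_{j})^{2}\le q_{\pm}(x,y,s)$ and $(y_{j}\pm x_{j}s_{j})^{2}\le q_{\pm}(x,y,s)$, and both will follow from a single algebraic rewriting.

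First I would expand, for the ``$+$'' case,
\begin{align*}
q_{+}(x,y,s)
&=\sum_{i=1}^{d}\bigl(x_{i}^{2}+2x_{i}y_{i}s_{i}+y_{i}^{2}s_{i}^{2}\bigr)
+\sum_{i=1}^{d}y_{i}^{2}(1-s_{i}^{2})\\
&=\sum_{i=1}^{d}(x_{i}+y_{i}s_{i})^{2}+\sum_{i=1}^{d}y_{i}^{2}(1-s_{i}^{2}),
\end{align*}
where the identity is valid because $\sum_{i}y_{i}^{2}s_{i}^{2}+\sum_{i}y_{i}^{2}(1-s_{i}^{2})=|y|^{2}$. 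Since $s_{i}\in[-1,1]$, the remainder $\sum_{i}y_{i}^{2}(1-s_{i}^{2})$ is nonnegative, so dropping all terms except $i=j$ in the first sum yields $(x_{j}+y_{j}s_{j})^{2}\le q_{+}(x,y,s)$. Swapping the roles of $x$ and $y$ in the same decomposition (which the expression is symmetric enough to admit) gives $q_{+}(x,y,s)=\sum_{i}(y_{i}+x_{i}s_{i})^{2}+\sum_{i}x_{i}^{2}(1-s_{i}^{2})$, hence also $(y_{j}+x_{j}s_{j})^{2}\le q_{+}(x,y,s)$.

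For the ``$-$'' case, I would apply exactly the same identities with $s$ replaced by $-s$, which leaves the range $[-1,1]^{d}$ invariant and turns $q_{+}(x,y,s)$ into $q_{-}(x,y,s)$ and $(x_{j}+y_{j}s_{j})$ into $(x_{j}-y_{j}s_{j})$, and similarly for $y_{j}-x_{j}s_{j}$. This packages all four inequalities into one line of algebra.

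There is really no substantial obstacle here; the only point to be careful about is to make sure the completion of squares produces a nonnegative remainder, which is guaranteed precisely by $s_{i}\in[-1,1]$ (so that $1-s_{i}^{2}\ge 0$) and by $x,y\in\mathbb{R}_{+}^{d}$ (which is actually not needed for this particular step but ensures the rewriting is the natural one for later use). Taking square roots at the end is legitimate since both sides are nonnegative.
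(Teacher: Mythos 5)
Your proof is correct. The paper states this lemma without proof, so there is nothing to compare against, but your completion-of-squares decomposition
$$q_{+}(x,y,s)=\sum_{i=1}^{d}(x_{i}+y_{i}s_{i})^{2}+\sum_{i=1}^{d}y_{i}^{2}(1-s_{i}^{2})$$
and its $x\leftrightarrow y$ counterpart are exactly the natural argument, and the reduction of the $q_{-}$ case to the $q_{+}$ case via $s\mapsto -s$ is clean. Your parenthetical observation that the hypothesis $x,y\in\mathbb{R}_{+}^{d}$ is not actually used here is also accurate: only $1-s_{i}^{2}\ge 0$ is needed, and the lemma holds verbatim for all $x,y\in\mathbb{R}^{d}$.
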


\begin{lem}\label{oq}
Given $b\geq 0$ and $c>0$, we have
$$
\big(q_{\pm}(x,y,s)\big)^{b}\exp\big(-cAq_{\pm}(x,y,s)\big)\lesssim A^{-b}\exp\Big(\frac{-cA}{2}q_{\pm}(x,y,s)\Big),\qquad A>0,
$$
uniformly in $q_{\pm}$.
\end{lem}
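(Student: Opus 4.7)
The plan is to reduce the claim to showing that the real function $\varphi(u)=u^{b}e^{-cu/2}$ is uniformly bounded for $u\geq 0$, which is a one-variable calculus exercise.

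More precisely, I would first multiply the desired inequality through by $A^{b}\exp\bigl(\tfrac{cA}{2}q_{\pm}(x,y,s)\bigr)$, so that the claim is equivalent to
\[
\bigl(A\,q_{\pm}(x,y,s)\bigr)^{b}\exp\Bigl(-\tfrac{cA}{2}q_{\pm}(x,y,s)\Bigr)\lesssim 1,
\]
uniformly in $A>0$ and in $q_{\pm}(x,y,s)\geq 0$. Setting $u:=A\,q_{\pm}(x,y,s)\geq 0$ converts the problem into the single-variable estimate $\sup_{u\geq 0}\varphi(u)<\infty$ with $\varphi(u)=u^{b}e^{-cu/2}$, where the bound depends only on $b$ and $c$.

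To dispose of $\varphi$, I distinguish the trivial case $b=0$, in which $\varphi(u)=e^{-cu/2}\leq 1$, from the case $b>0$. For $b>0$ a direct computation gives $\varphi'(u)=u^{b-1}e^{-cu/2}\bigl(b-\tfrac{c}{2}u\bigr)$, so that $\varphi$ attains its unique maximum at $u_{0}=2b/c$ with maximal value $\varphi(u_{0})=(2b/c)^{b}e^{-b}$; in particular $\varphi$ is bounded on $[0,\infty)$ by a constant $C_{b,c}$. Substituting back yields
\[
q_{\pm}(x,y,s)^{b}\exp\bigl(-cA\,q_{\pm}(x,y,s)\bigr)\leq C_{b,c}\,A^{-b}\exp\Bigl(-\tfrac{cA}{2}q_{\pm}(x,y,s)\Bigr),
\]
which is exactly the asserted inequality. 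There is no genuine obstacle here, since both $A>0$ and $q_{\pm}\geq 0$ enter the argument only through the single combination $u=Aq_{\pm}$, and the remaining exponential factor $e^{-cAq_{\pm}/2}$ is precisely what survives after splitting $e^{-cAq_{\pm}}=e^{-cAq_{\pm}/2}\cdot e^{-cAq_{\pm}/2}$ and absorbing half of the decay into the polynomial factor.
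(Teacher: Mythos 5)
Your proof is correct and follows the same idea as the paper, which simply observes that $\sup_{u\geq 0} u^{b}\exp(-cu/2)<\infty$; you have just spelled out the substitution $u=Aq_{\pm}$ and the elementary maximization of $u^{b}e^{-cu/2}$ explicitly.
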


\begin {proof}
It suffices to observe that\, $\sup_{u\geq 0}{u^{b}}\exp(-cu\slash 2)<\infty. $
\end {proof}

\begin{lem}$($\cite[Lemma 1.1]{StTo}$)$ \label{lem5.4}
Given $a>1$, we have
\begin{equation*}
\int_0^1\zeta^{-a}\exp(-T\zeta^{-1})\,d\zeta\lesssim T^{-a+1},
\qquad T>0.
\end{equation*}
\end{lem}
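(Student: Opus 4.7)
The plan is a direct change of variables that reduces the integral to a tail of the gamma integral. Substituting $u=T\slash\zeta$, so that $d\zeta=-Tu^{-2}\,du$ and the limits $\zeta\in(0,1)$ transform to $u\in(T,\infty)$, the integral becomes
\begin{equation*}
\int_{0}^{1}\zeta^{-a}\exp(-T\zeta^{-1})\,d\zeta
= T^{1-a}\int_{T}^{\infty}u^{a-2}e^{-u}\,du.
\end{equation*}

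To finish, I would bound the tail gamma integral on the right uniformly in $T$ by the complete one,
\begin{equation*}
\int_{T}^{\infty}u^{a-2}e^{-u}\,du
\leq \int_{0}^{\infty}u^{a-2}e^{-u}\,du
= \Gamma(a-1),
\end{equation*}
which converges precisely under the hypothesis $a>1$ (this is what secures integrability at $u=0$; the exponential factor handles $u\to\infty$ regardless of $a$). This delivers the stated bound with implicit constant $\Gamma(a-1)$, independent of $T>0$.

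I do not expect any technical obstacle here; the computation is essentially a one-line substitution. The only point worth stressing is that the assumption $a>1$ is sharp for this method: without it the full gamma integral diverges at $0$ and one would be forced to retain a $T$-dependent factor coming from the lower limit, which would yield at best a logarithmic correction rather than the clean power $T^{-a+1}$.
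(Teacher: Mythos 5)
Your proof is correct. The paper does not reproduce a proof of this lemma (it simply cites \cite[Lemma 1.1]{StTo}), so there is no in-paper argument to compare against, but the substitution $u=T\slash\zeta$ reducing the left-hand side to $T^{1-a}\int_T^\infty u^{a-2}e^{-u}\,du\le T^{1-a}\Gamma(a-1)$ is exactly the natural computation, and your observation that $a>1$ is precisely the threshold making $\Gamma(a-1)$ finite is the right thing to stress.
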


\begin{lem}\label{uw}
Let $C>0$ be fixed. Then
$$
\int_{0}^{1}\zeta^{-3}\log\frac{1+\zeta}{1-\zeta}\,\exp\Big(-\frac{C}{\zeta}q_{+}(x,y,s)\Big)\,d\zeta
\lesssim \frac{1}{q_{+}(x,y,s)}
$$
uniformly in $q_{+}$.
\end{lem}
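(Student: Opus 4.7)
The plan is to bound the integral by splitting the interval of integration at $\zeta = 1\slash 2$ and handling the near-zero and near-one regions separately, both ultimately yielding the claimed $O(1\slash q_{+})$ bound. Write $T=Cq_{+}(x,y,s)$; since $x,y\in\R$ we have $q_{+}>0$, so $T>0$, and the target is to show the integral is $\lesssim 1\slash T$.

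First, for the contribution from $\zeta\in(0,1\slash 2)$, I would use the elementary estimate $\log\frac{1+\zeta}{1-\zeta}\lesssim \zeta$ on $[0,1\slash 2]$ (which follows from the Taylor expansions $\log(1+\zeta)\le \zeta$ and $-\log(1-\zeta)\le 2\zeta$ on that range). This reduces the integrand to a constant multiple of $\zeta^{-2}\exp(-T\slash\zeta)$, and Lemma \ref{lem5.4} applied with $a=2$ gives
\begin{equation*}
\int_{0}^{1\slash 2}\zeta^{-2}\exp\big(-T\slash\zeta\big)\,d\zeta \;\lesssim\; T^{-1},
\end{equation*}
which is the desired bound for this piece.

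Next, for the contribution from $\zeta\in(1\slash 2,1)$, I would simply use $\zeta^{-3}\le 8$ and the monotonicity of $u\mapsto e^{-u}$ to pull out $\exp(-T\slash\zeta)\le \exp(-T)$, leaving a tame integrand in $\log\frac{1+\zeta}{1-\zeta}$, which is integrable on $[0,1]$ by \eqref{cal}. This gives an upper bound of the form $C\exp(-T)$, and since $Tu e^{-u}$ is bounded on $(0,\infty)$ (with $\sup_{u>0} ue^{-u}=e^{-1}$), one has $\exp(-T)\lesssim 1\slash T$ uniformly in $T>0$. Combining the two contributions and dividing by $C$ returns the bound $1\slash q_{+}(x,y,s)$.

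There is no serious obstacle here; the argument is a clean split combined with Lemma \ref{lem5.4} and the universal inequality $e^{-T}\lesssim 1\slash T$. The only mildly delicate point is making sure the logarithmic factor is handled on both pieces (linearly small near $\zeta=0$, absolutely integrable near $\zeta=1$), but both facts are already recorded in \eqref{cal} and the asymptotics displayed immediately after it.
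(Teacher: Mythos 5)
Your proof is correct and follows the same split at $\zeta=1/2$ as the paper, with an identical treatment of the region $\zeta\in(0,1/2)$ (absorb the logarithm into $\zeta$, then apply Lemma \ref{lem5.4} with $a=2$). On $\zeta\in(1/2,1)$ the paper instead keeps $\exp(-Cq_+/\zeta)$ and invokes Lemma \ref{oq} with $b=1$, $A=\zeta^{-1}$ to produce the factor $1/q_+$, whereas you use monotonicity to replace $\exp(-T/\zeta)$ by $\exp(-T)$ and then $e^{-T}\lesssim 1/T$; both ultimately rest on $\sup_{u>0}u e^{-u}<\infty$, so this is only a cosmetic reorganization and, if anything, yours is slightly more transparent.
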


\begin {proof}
Consider first $\z\in(1\slash 2,1)$. Bounding the factor $\zeta^{-2}$ by a constant and then using Lemma \ref{oq} (with $b=1$, $c=C$ and $A=\zeta^{-1}$) we see that
\begin{align*}
&\int_{1\slash 2}^{1}\zeta^{-3}\log\frac{1+\zeta}{1-\zeta}\,
\exp\Big(-\frac{C}{\zeta}q_{+}(x,y,s)\Big)\,d\zeta\\
&\lesssim
\frac{1}{q_{+}(x,y,s)}\int_{1\slash 2}^{1}\log\frac{1+\zeta}{1-\zeta}\,
\frac{q_{+}(x,y,s)}{\zeta}\exp\Big(-\frac{C}{\zeta}q_{+}(x,y,s)\Big)\,d\zeta\\
&\lesssim
\frac{1}{q_{+}(x,y,s)}\int_{1\slash 2}^{1}\log\frac{1+\zeta}{1-\zeta}\,d\zeta,\\
\end{align*}
where by \eqref{cal} the last integral is finite. On the other hand, observing that the function $\zeta\mapsto\frac{1}{\zeta}\log\frac{1+\zeta}{1-\zeta}$ is bounded on $(0,1\slash 2)$ and using Lemma \ref{lem5.4} (with $a=2$ and $T=Cq_{+}$) we get
$$
\int_{0}^{1\slash 2}\zeta^{-3}\log\frac{1+\zeta}{1-\zeta}\,
\exp\Big(-\frac{C}{\zeta}q_{+}(x,y,s)\Big)\,d\zeta
\lesssim
\int_{0}^{1\slash 2}
\zeta^{-2}\exp\Big(-\frac{C}{\zeta}q_{+}(x,y,s)\Big)\,d\zeta\\
\lesssim
\frac{1}{q_{+}(x,y,s)}.
$$
The conclusion follows.
\end {proof}

\begin{lem}\label{lemat}
If $x,x',y\in\R$ are such that $|x-y|>2|x-x'|$ and $\t=\ll x+(1-\ll)x'$ for some $\ll\in[0,1]$, then 
\begin{align*}
\frac{1}{4}q_{\pm}(x,y,s)
\leq
q_{\pm}(\t,y,s)
\leq
4q_{\pm}(x,y,s),\qquad s\in[-1,1]^d.
\end{align*}
The same holds after exchanging the roles of $x$ and $y$.
\end{lem}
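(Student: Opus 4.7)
The plan is to rewrite $q_{\pm}$ as a squared Euclidean distance plus a nonnegative remainder. Introducing $w\in\mathbb{R}^{d}$ with $w_{i}=\mp y_{i}s_{i}$, an immediate expansion gives
\begin{equation*}
q_{\pm}(x,y,s)=|x-w|^{2}+\bigl(|y|^{2}-|w|^{2}\bigr),
\end{equation*}
and the parenthesized term is nonnegative since $|w_{i}|\leq |y_{i}|$ for each $i$. The same formula with the same $w$ represents $q_{\pm}(\theta,y,s)$. Thus comparing $q_{\pm}(\theta,y,s)$ to $q_{\pm}(x,y,s)$ reduces to comparing $|\theta-w|^{2}$ to $|x-w|^{2}$, and the perturbation $|\theta-x|=(1-\lambda)|x-x'|\leq|x-x'|$ is small because $\theta$ lies on the segment from $x$ to $x'$.

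The second ingredient I would establish is the auxiliary inequality $q_{\pm}(x,y,s)\geq |x-y|^{2}$ valid on $\R$. Indeed, since $x_{i},y_{i}\geq 0$ and $s_{i}\in[-1,1]$, one has $\pm 2\sum_{i}x_{i}y_{i}s_{i}\geq -2\langle x,y\rangle$, giving $q_{\pm}(x,y,s)\geq |x|^{2}+|y|^{2}-2\langle x,y\rangle=|x-y|^{2}$. Combined with the hypothesis $|x-y|>2|x-x'|$, this produces the clean comparison $4|x-x'|^{2}<q_{\pm}(x,y,s)$, which is precisely the control needed below.

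For the upper bound, I would apply $(a+b)^{2}\leq 2a^{2}+2b^{2}$ to $|\theta-w|\leq|x-w|+|\theta-x|$, obtaining $q_{\pm}(\theta,y,s)\leq 2|x-w|^{2}+2|x-x'|^{2}+(|y|^{2}-|w|^{2})\leq 2q_{\pm}(x,y,s)+2|x-x'|^{2}$, and then invoke the auxiliary inequality to conclude $q_{\pm}(\theta,y,s)\leq\tfrac{5}{2}q_{\pm}(x,y,s)\leq 4q_{\pm}(x,y,s)$. The lower bound is symmetric: applying the same elementary inequality to $|x-w|\leq|\theta-w|+|x-\theta|$ yields $q_{\pm}(x,y,s)\leq 2q_{\pm}(\theta,y,s)+2|x-x'|^{2}$, and the auxiliary bound again absorbs the perturbation to give $q_{\pm}(\theta,y,s)\geq \tfrac{1}{4}q_{\pm}(x,y,s)$. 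The version with $x$ and $y$ interchanged is automatic since the defining formula is manifestly symmetric: $q_{\pm}(y,x,s)=q_{\pm}(x,y,s)$.

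The only mildly subtle step is identifying the auxiliary bound $q_{\pm}(x,y,s)\geq |x-y|^{2}$, which is what transfers the hypothesis $|x-y|>2|x-x'|$ into the usable estimate $|x-x'|^{2}<\tfrac{1}{4}q_{\pm}(x,y,s)$; everything else is routine application of the triangle inequality to the auxiliary point $w$.
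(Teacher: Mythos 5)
Your proof is correct. The underlying mechanism is the same as the paper's — both arguments rest on the observation that, for fixed $y$ and $s$, the map $u\mapsto q_{\pm}(u,y,s)$ is a shifted squared Euclidean norm plus a nonnegative constant, namely $q_{\pm}(u,y,s)=|u-w|^{2}+(|y|^{2}-|w|^{2})$ with $w_{i}=\mp y_{i}s_{i}$ (in the paper this identity appears in disguise, as the term $\sum_j(\theta_j+y_js_j)^2+(1-s_j^2)y_j^2=q_+(\theta,y,s)$ inside a square root). What differs is the bookkeeping. The paper takes square roots: it proves $\sqrt{q_+(x,y,s)}\leq\sqrt{q_+(\theta,y,s)}+|x-\theta|$ by direct squaring and Cauchy--Schwarz, pairs this with $|x-\theta|\leq\sqrt{q_+(\theta,y,s)}$, and reduces $q_-$ to $q_+$ via $s\mapsto -s$. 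You stay with squares, invoke $(a+b)^2\leq 2a^2+2b^2$, and absorb the perturbation term using the clean auxiliary bound $q_{\pm}(x,y,s)\geq|x-y|^2>4|x-x'|^2$; the choice $w_i=\mp y_is_i$ also treats both signs at once. Your presentation makes the geometry (auxiliary point $w$, 1-Lipschitz behaviour of $\sqrt{q_\pm(\cdot,y,s)}$) more transparent, at the cost of landing on the sharper constant $5/2$ rather than the stated $4$, which is harmless. Both proofs are complete and rigorous; yours is arguably the cleaner exposition.
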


\begin {proof}
Since $q_{\pm}(x,y,s)=q_{\mp}(x,y,-s)$, it is enough to consider $q_{+}$ only. We first show that $q_{+}(x,y,s)\leq 4q_{+}(\t,y,s)$. This will follow once we check that
\begin{align*}
\sqrt{q_{+}(x,y,s)}&\leq\sqrt{q_{+}(\t,y,s)}+|x-\t|,\\
|x-\t|&\leq\sqrt{q_{+}(\t,y,s)}.
\end{align*}
To verify the first inequality above we denote $ys=(y_{1}s_{1},\ldots,y_{d}s_{d})$ and write
\begin{align*}
\big(\sqrt{q_{+}(\t,y,s)}+|x-\t|\big)^2=&\,
q_{+}(\t,y,s)+|x-\t|^2+2|x-\t|\bigg(\sum_{j=1}^{d}(\t_{j}+y_{j}s_{j})^{2}+(1-s_{j}^2)y_{j}^2\bigg)^{1\slash 2}\\
\geq&
\,q_{+}(\t,y,s)+|x-\t|^2+2|x-\t||\t+ys|\\
=&
\,q_{+}(x,y,s)+2|x-\t||\t+ys|-2\sum_{j=1}^{d}(x_{j}-\t_{j})(\t_{j}+y_{j}s_{j});
\end{align*}
in view of the Schwarz inequality, the last expression is not less than $q_{+}(x,y,s)$. Checking the second inequality is even easier. Using the relation $|x-y|>2|x-x'|$ we get
\begin{align*}
|x-\t|\leq |x-x'|<\frac{1}{2}|x-y|\leq \frac{1}{2}(|x-\t|+|\t-y|),
\end{align*}
so $|x-\t|\leq|y-\t|$. On the other hand,
\begin{align*}
|y-\t|^2=\sum_{j=1}^{d}\t_{j}^2+y_{j}^2-2\t_{j}y_{j}\leq
\sum_{j=1}^{d}\t_{j}^2+y_{j}^2+2\t_{j}y_{j}s_{j}=
q_{+}(\t,y,s).
\end{align*}
Altogether, this gives $|x-\t|\leq\sqrt{q_{+}(\t,y,s)}$.

Proving $q_{+}(\t,y,s)\leq 4q_{+}(x,y,s)$ relies on exchanging the roles of $x$ and $\t$ in the above reasoning. Finally, the last assertion of the lemma follows by the symmetry $q_{\pm}(x,y,s)=q_{\pm}(y,x,s)$.
\end {proof}

\begin{lem}$($\cite[Lemma 5.3]{NS3}, \cite[Lemma 4]{NS2}$)$ \label{lem4}
Assume that $\alpha \in [-1\slash 2, \infty)^d$ and let $\delta,\kappa \in [0,\infty)^d$ be fixed.
Then for $x,y\in\R$, $x \neq y$,
\begin{equation*}
(x+y)^{2\delta} \int_{[-1,1]^d}
	 \big(q_{+}(x,y,s)\big)^{-d - |\alpha| -|\delta|}\,\Pi_{\alpha+\delta+\kappa}(ds)
\lesssim \frac{1}{\mu_{\alpha}(B(x,|y-x|))}
\end{equation*}
and
\begin{equation*}
 (x+y)^{2\delta}\int_{[-1,1]^d} 
    \big(q_{+}(x,y,s)\big)^{-d - |\alpha| -|\delta| - 1\slash 2}\,\Pi_{\alpha+\delta+\kappa}(ds)
\lesssim \frac{1}{|x-y|} \;
 \frac{1}{\mu_{\alpha}(B(x,|y-x|))},
\end{equation*}
where $(x+y)^{2\delta}=(x_{1}+y_{1})^{2\delta_{1}}\ldots (x_{d}+y_{d})^{2\delta_{d}}$.
\end{lem}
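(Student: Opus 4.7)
The plan is to reduce both stated estimates to explicit size bounds on $\mu_\a$-measure of balls together with a family of one-dimensional integrals that can be evaluated directly. I would first recall the standard asymptotic
\[
\mu_{\a}\big(B(x,r)\big)\simeq r^{d}\prod_{i=1}^{d}(x_{i}+r)^{2\a_{i}+1},\qquad x\in\R,\ r>0,
\]
which follows from the product structure of $d\mu_\a$. Since $x,y\in\R$ and $|x_i-y_i|\leq|x-y|$, we obtain $x_i+r\simeq x_i+y_i+r$ for $r=|x-y|$, so the target right-hand side of the first inequality is comparable to $r^{-d}\prod_{i=1}^d(x_i+y_i+r)^{-2\a_i-1}$, and the second target equals the same expression multiplied by $r^{-1}$.

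The key algebraic input is the coordinate decomposition
\[
q_{+}(x,y,s)=\sum_{i=1}^{d}q_+^{(i)}(x_i,y_i,s_i),\qquad q_+^{(i)}(x_i,y_i,s_i):=\frac{(x_{i}-y_{i})^{2}(1-s_{i})+(x_{i}+y_{i})^{2}(1+s_{i})}{2},
\]
which yields both the joint lower bound $q_{+}\geq|x-y|^{2}$ and the coordinate-wise bound $q_{+}\geq q_+^{(i)}$ for each $i$. Multiplying these, for any nonnegative splitting $\gamma_{0}+\gamma_{1}+\cdots+\gamma_{d}=d+|\a|+|\delta|$ one obtains the interpolation
\[
q_{+}(x,y,s)^{-d-|\a|-|\delta|}\lesssim |x-y|^{-2\gamma_{0}}\prod_{i=1}^{d}\big(q_+^{(i)}(x_i,y_i,s_i)\big)^{-\gamma_{i}}.
\]
The advantage of bounding by $q_+^{(i)}$ rather than by one of its individual summands is that the right-hand side factorizes cleanly through the $s$-integration, while retaining both $(1-s_i)$ and $(1+s_i)$ factors and avoiding a boundary singularity. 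I would then partition $\{1,\ldots,d\}$ into $S=\{i:x_i+y_i\geq r\}$ and $L=\{1,\ldots,d\}\setminus S$; on $S$ I choose $\gamma_i=\a_i+\delta_i+1/2$, perturbed by a small $\varepsilon_i>0$ when $\kappa_i=0$, and on $L$ I set $\gamma_i=0$, redistributing the weight into $\gamma_0$. On $S$, the factor $(x_i+y_i)^{2\delta_i}$ from $(x+y)^{2\delta}$ then combines with the outcome $(x_i+y_i)^{-2\gamma_i}$ of the one-dimensional integral to yield $(x_i+y_i)^{-2\a_i-1}\simeq(x_i+y_i+r)^{-2\a_i-1}$; on $L$, where $x_i+y_i<r$, one uses $(x_i+y_i)^{2\delta_i}\leq r^{2\delta_i}$ and the excess powers of $r$ coming from the enlarged $\gamma_0$ exactly reproduce the missing factor $r^{-\sum_L(2\a_i+1)}$.

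The only remaining work is to verify the one-dimensional estimate
\[
\int_{-1}^{1}\big(q_+^{(i)}(x_i,y_i,s_i)\big)^{-\gamma_{i}}(1-s_{i}^{2})^{\a_{i}+\delta_{i}+\kappa_{i}-1/2}\,ds_{i}\lesssim (x_i+y_i)^{-2\gamma_i},
\]
which after the substitution $u_i=1+s_i$ and a split at $u_i=1$ reduces to two standard Beta-type integrals bounded by elementary analysis; convergence requires $\gamma_i<\a_i+\delta_i+\kappa_i+1/2$ and $\a_i+\delta_i+\kappa_i>-1/2$, both of which are ensured by the hypotheses $\a\in[-1/2,\infty)^d$ and $\delta,\kappa\in[0,\infty)^d$ (modulo the small perturbation $\varepsilon_i$ in the boundary case $\kappa_i=0$). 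The second inequality of the lemma follows by the same argument after replacing $d+|\a|+|\delta|$ with $d+|\a|+|\delta|+1/2$, which shifts $\gamma_0$ up by $1/2$ and produces the extra factor $r^{-1}$. The main obstacle is the exponent bookkeeping across the partition $(S,L)$ and the perturbation needed when $\kappa_i=0$; in addition, the boundary instance $\a_i=-1/2$, in which $\Pi_{-1/2}$ degenerates to point masses at $\pm 1$ and the $s_i$-integral collapses to a two-point sum, must be handled as a degenerate limit of the scheme above, and it is precisely here that the full hypothesis $\a\in[-1/2,\infty)^d$ is essential.
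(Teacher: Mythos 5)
Note first that the paper does not prove this lemma at all — it is quoted from \cite[Lemma 5.3]{NS3} and \cite[Lemma 4]{NS2} — so your argument has to stand on its own, and it has a genuine gap exactly in the cases the paper actually uses, namely when some $\kappa_i=0$ (the lemma is invoked with $\delta=\kappa=0$, with $\delta=e_1$, $\kappa=0$, etc.). With $\kappa_i=0$ your choice $\gamma_i=\alpha_i+\delta_i+1/2$ sits at the critical exponent for your one-dimensional estimate: after bounding $q_+^{(i)}\gtrsim (x_i+y_i)^2(1+s_i)$ the $s_i$-integral behaves like $\int_{-1}(1+s_i)^{-1}\,ds_i$, so it is not $\lesssim (x_i+y_i)^{-2\gamma_i}$; it is finite only thanks to the $(x_i-y_i)^2(1-s_i)$ part that your factorized bound has discarded, and even then it carries a factor of order $\log\big((x_i+y_i)/|x_i-y_i|\big)$, which is $+\infty$ when $x_i=y_i$ but $x\ne y$. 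Your proposed repair — lowering $\gamma_i$ by $\varepsilon_i$ and pushing the excess into $\gamma_0$ — makes each factor finite but breaks the bookkeeping: that coordinate now yields $(x_i+y_i)^{-2\alpha_i-1+2\varepsilon_i}$ while $\gamma_0$ produces an extra $|x-y|^{-2\varepsilon_i}$, i.e.\ a leftover factor $\big((x_i+y_i)/|x-y|\big)^{2\varepsilon_i}$, which is unbounded on your set $S$. Already for $d=1$, $\alpha=0$, $\delta=\kappa=0$, your chain gives the bound $|x-y|^{-1-2\varepsilon}(x+y)^{-1+2\varepsilon}$, which exceeds the target $|x-y|^{-1}(x+y)^{-1}\simeq \mu_0(B(x,|x-y|))^{-1}$ by the unbounded factor $\big((x+y)/|x-y|\big)^{2\varepsilon}$ as $y\to x$. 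So the failure is structural: extracting $|x-y|^{-2\gamma_0}$ a priori via $q_+\ge|x-y|^2$ and then treating each coordinate at (or below) the critical exponent is too lossy.

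The correct mechanism, as in the cited Nowak--Stempak proofs, is to produce the $|x-y|$-decay \emph{inside} the $s_i$-integration: keep $q_+^{(i)}=\tfrac12\big[(x_i-y_i)^2(1-s_i)+(x_i+y_i)^2(1+s_i)\big]$ intact with a \emph{super}critical exponent and use an estimate of the type $\int_{-1}^{1}(A-Bs)^{-\alpha-1/2-\gamma}\,\Pi_\alpha(ds)\lesssim A^{-\alpha-1/2}(A-B)^{-\gamma}$, $\gamma>0$, so that the factors $(x_i-y_i)^2$ coming from the various coordinates can be assembled into $|x-y|^2$; this couples the coordinates through where the excess exponent is spent and cannot be reproduced by your clean factorization. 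To your credit, the scheme does close when every $\kappa_i>0$ (then $\gamma_i$ is strictly subcritical, and your exponent count over the partition $S\cup L$, which I checked, matches exactly), your ball-volume asymptotics and the identity $q_+=\sum_i q_+^{(i)}$ are correct, and the point-mass case $\alpha_i=-1/2$, $\delta_i=\kappa_i=0$ is harmless since there one simply takes $\gamma_i=0$; but as written the proof does not establish the lemma when $\kappa_i=0$ and $\alpha_i+\delta_i>-1/2$, which is the case of main interest.
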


\begin{lem}\label{wch}
Let $F\colon (0,\infty)\times[-1,1]^{d}\mapsto\mathbb{R}$ be a function such that $F(\cdot,s)$ is continuously differentiable for each fixed $s$. Further, assume that for each $v>0$ there exists $a\in(0,v)$ and a function $f_{a,v}\in L^{1}(\Pi_{\a}(ds))$ such that $|\partial_{z}F(z,s)|\leq f_{a,v}(s)$ for all $z\in[a,v]$ and $s\in[-1,1]^{d}$. Then 
$$
\partial_{z}\int_{[-1,1]^{d}}F(z,s)\,\Pi_{\a}(ds)=
\int_{[-1,1]^{d}}\partial_{z}F(z,s)\,\Pi_{\a}(ds),\qquad z>0.
$$
\end{lem}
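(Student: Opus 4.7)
The plan is to reduce the claim to the dominated convergence theorem applied to difference quotients. Fix $z>0$ and, for $h\ne 0$ small, set
$$
Q_{h}(s)=\frac{F(z+h,s)-F(z,s)}{h}.
$$
Since $F(\cdot,s)$ is continuously differentiable, the mean value theorem furnishes some $\xi_{h}(s)$ strictly between $z$ and $z+h$ with $Q_{h}(s)=\partial_{z}F(\xi_{h}(s),s)$, and continuity of $\partial_{z}F(\cdot,s)$ yields $Q_{h}(s)\to\partial_{z}F(z,s)$ pointwise as $h\to 0$.

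Next, I would invoke the hypothesis to produce a $\Pi_{\alpha}$-integrable majorant of $|Q_{h}|$ that is uniform in $h$ near $0$. Pick $v>z$ and, using the hypothesis, obtain $a\in(0,v)$ with $a<z$ (combining two instances of the hypothesis, one for some $v>z$ and another for some $v'<z$, and replacing the two resulting majorants by their sum, if needed) together with $f_{a,v}\in L^{1}(\Pi_{\alpha})$ such that $|\partial_{z}F(u,s)|\le f_{a,v}(s)$ for all $u\in[a,v]$ and $s\in[-1,1]^{d}$. For $|h|<\min(z-a,v-z)$ one then has $\xi_{h}(s)\in[a,v]$, whence $|Q_{h}(s)|\le f_{a,v}(s)$.

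The dominated convergence theorem applied to the family $\{Q_{h}\}$ now gives
$$
\lim_{h\to 0}\int_{[-1,1]^{d}}Q_{h}(s)\,\Pi_{\alpha}(ds)=\int_{[-1,1]^{d}}\partial_{z}F(z,s)\,\Pi_{\alpha}(ds).
$$
By linearity of the integral the left-hand side coincides with $\partial_{z}\!\int_{[-1,1]^{d}}F(z,s)\,\Pi_{\alpha}(ds)$, and the asserted identity follows. The argument is entirely routine; the only mild delicacy, which I would flag explicitly at the relevant step, is the extraction from the stated hypothesis of an integrable majorant valid on a genuine two-sided neighborhood of the fixed point $z$, rather than on a one-sided interval only.
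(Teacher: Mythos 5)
Your argument is correct but follows a genuinely different route from the paper's. You apply the dominated convergence theorem directly to the difference quotients $Q_h(s)=(F(z+h,s)-F(z,s))/h$, using the mean value theorem to obtain pointwise convergence to $\partial_z F(z,s)$ together with the uniform majorant $f_{a,v}$. The paper instead integrates first: it observes that $\int_a^v\!\int|\partial_z F|\,\Pi_\alpha(ds)\,dz\le(v-a)\int f_{a,v}\,\Pi_\alpha(ds)<\infty$, invokes Fubini to rewrite $\int_a^v\!\int\partial_z F\,\Pi_\alpha\,dz$ as $\int\bigl(F(v,s)-F(a,s)\bigr)\Pi_\alpha(ds)$ via the fundamental theorem of calculus in $z$, and then differentiates the resulting identity in the upper endpoint. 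The two proofs are essentially the two classical routes to differentiation under the integral sign. Your DCT route is more elementary and self-contained; the Fubini route sidesteps the (mild) issue that the mean value theorem produces a $\xi_h(s)$ of unknown measurability -- though, as you effectively use, one only needs the resulting bound, not $\xi_h$ itself, and in any case $Q_h(s)=\frac1h\int_z^{z+h}\partial_z F(u,s)\,du$ gives the same bound without MVT. Both proofs share the same small subtlety that you explicitly flag: the hypothesis supplies a majorant on an interval $[a,v]$ whose left endpoint $a$ is not under the user's control, so extracting a genuine two-sided majorant around the fixed $z$ requires the kind of patching you indicate; the paper's ``differentiating this identity in $v$'' silently makes the same kind of interpretive leap. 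In the intended applications the majorant is in fact global, so the point is harmless.
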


\begin {proof}
Since $f_{a,v}$ is integrable against $\Pi_{\a}$ we have
$$
\int_{a}^{v}\int_{[-1,1]^{d}}|\partial_{z}F(z,s)|\,\Pi_{\a}(ds)\,dz
\leq
(v-a)\int_{[-1,1]^{d}}f_{a,v}(s)\,\Pi_{\a}(ds)<\infty,\qquad v>a.
$$
Thus we may apply the Fubini theorem to obtain
\begin{align*}
\int_{a}^{v}\int_{[-1,1]^{d}}\partial_{z}F(z,s)\,\Pi_{\a}(ds)\,dz
=\int_{[-1,1]^{d}}\big(F(v,s)-F(a,s)\big)\,\Pi_{\a}(ds).
\end{align*}
Differentiating this identity in v gives the desired conclusion.
\end {proof}

To write estimates of expressions involving $G_{t}^{\alpha}(x,y)$ and $\widetilde{G}^{\alpha,j}_t(x,y)$, it is convenient to introduce the following abbreviations:
$$
\lo(\zeta)=\log\frac{1+\zeta}{1-\zeta}, \qquad \e\b=\exp\Big(-{\frac{1}{4\zeta}}q_{+}(x,y,s)-\frac{\zeta}{4}q_{-}(x,y,s)\Big).
$$
Also, we will often neglect the set of integration $[-1,1]^{d}$ in integrals against $\Pi_{\alpha}$ and will frequently write shortly $q_{+}$ and $q_{-}$ omitting the arguments.
\subsection{Vertical $g$-function based on $\mathbf{\{T_{t}^{\alpha}\}}$}\label{k1}

\begin {proof}[Proof of Theorem \ref{preg}; the case of $g_{V,T}$.]
We first deal with the growth estimate. Differentiating \eqref{gie} we get (passing with $\partial_{t}$ under the integral sign will be justified in a moment)
$$
\partial_{t}G_{t}^{\alpha}(x,y)=-\Big(\frac{1-\zeta^{2}}{2\zeta}\Big)^{d+|\alpha|}h(x,y,\zeta),
$$
where the auxiliary function $h$ is given by
$$
h(x,y,\zeta)=(d+|\alpha|)\frac{1+\zeta^{2}}{\zeta}\int \e\b\,\Pi_{\alpha}(ds)
+\frac{1-\zeta^{2}}{\zeta}\int \e\b\Big[-\frac{1}{4\zeta}q_{+}+\frac{\zeta}{4}q_{-}\Big]\,\Pi_{\alpha}(ds).
$$
Notice that $h$ depends on $\alpha$, but to shorten the notation we will not indicate that explicitly (a similar convention will be used to other auxiliary functions appearing in the sequel).

Estimating $1\pm\zeta^{2}$ by a constant and then applying Lemma \ref{oq} twice (first with $b=1$, $c=1\slash 4$, $A=\zeta^{-1}$ and then with $b=1$, $c=1\slash 4$, $A=\zeta$) we get
\begin{align*}
|h(x,y,\zeta)|&
\lesssim\zeta^{-1}\int\e\b\,\Pi_{\alpha}(ds)
+\zeta^{-1}\int\e\b\Big[\frac{1}{4\zeta}q_{+}+\frac{\zeta}{4}q_{-}\Big]\,\Pi_{\alpha}(ds)\\
&\lesssim
\zeta^{-1}\int \big(\e\b\big)^{1\slash 2}\,\Pi_{\alpha}(ds).
\end{align*}
Now changing the variable according to \eqref{zz} and then using sequently the above estimate, the Minkowski integral inequality, Lemma \ref{oq} (applied with $b=2d+2|\alpha|-1$, $c=1\slash 4$, $A=\zeta^{-1}$) and Lemma \ref{uw} (with $C=1\slash 8$) we obtain
\begin{align*}
\big\|\partial_{t}G_{t}^{\alpha}(x,y)\big\|_{L^{2}(tdt)} &=\bigg(\int_{0}^{1}\frac{1}{2}\lo(\zeta)\frac{1}{1-\zeta^{2}}\Big(\frac{1-\zeta^{2}}{2\zeta}\Big)^{2d+2|\alpha|}
|h(x,y,\zeta)|^{2}\,d\zeta\bigg)^{1\slash 2}\\
&\lesssim
\bigg(\int_{0}^{1}\lo(\zeta)\Big(\frac{1}{\zeta}\Big)^{2d+2|\alpha|}\Big(\frac{1}{\zeta}\Big)^{2}
\bigg(\int \big(\e\b\big)^{1\slash 2}\,
\Pi_{\alpha}(ds)\bigg)^{2}\,d\zeta\bigg)^{1\slash 2}\\
&\leq
\int\bigg(\int_{0}^{1}\lo(\zeta)\Big(\frac{1}{\zeta}\Big)^{2d+2|\alpha|-1}\Big(\frac{1}{\zeta}\Big)^{3}\e\b\,d\zeta\bigg)^{1\slash 2}\,\Pi_{\alpha}(ds)\\
&\lesssim
\int (q_{+})^{-d-|\alpha|+1\slash 2}
\bigg(\int_{0}^{1}\lo(\zeta)\Big(\frac{1}{\zeta}\Big)^{3}\big(\e\b\big)^{1\slash 2}\,d\zeta\bigg)^{1\slash 2}\,\Pi_{\alpha}(ds)\\
&\lesssim
\int (q_{+})^{-d-|\alpha|}\,\Pi_{\alpha}(ds).
\end{align*}
The growth estimate follows by Lemma \ref{lem4} (taken with $\delta=\kappa=0$).
Exchanging above $\partial_{t}$ with the integral sign was legitimate by virtue of Lemma \ref{wch} applied with $F(t,s)=\partial_{t}\e(\z(t),q_{\pm})$ and $f_{a,v}(s)\sim(q_{+}(x,y,s))^{-1}$. Indeed, $f_{a,v}\in L^{1}(\Pi_{\a}(ds))$ since $(q_{+})^{-1}\leq|x-y|^{-2}$ and 
$$
|\partial_{t}\e(\z(t),q_{\pm})|\lesssim
\frac{1}{\z(t)}\Big(\e(\z(t),q_{\pm})\Big)^{1\slash 2}\lesssim
\frac{1}{q_{+}}.
$$

It remains to prove the smoothness estimates. By symmetry reasons, it suffices to show that
$$
\big\|\partial_{t}G_{t}^{\a}(x,y)-\partial_{t}G_{t}^{\a}(x',y)\big\|_{L^2(tdt)}
\lesssim
\frac{|x-x'|}{|x-y|} \;
\frac{1}{\mu_{\alpha}(B(x,|y-x|))},\qquad |x-y|>2|x-x'|.
$$
We consider the derivatives
$$
\partial_{x_{i}}\partial_{t}G_{t}^{\alpha}(x,y)=-\Big(\frac{1-\zeta^{2}}{2\zeta}\Big)^{d+|\alpha|}
\partial_{x_{i}}h(x,y,\zeta),\qquad i=1,\ldots,d.
$$
Passing with $\partial_{x_{i}}$ under the integral sign (this can be easily justified with the aid of Lemma \ref{wch}, see the comment above) we get
\begin{align*}
\partial_{x_{i}}h(x,y,\zeta)=\,&
(d+|\alpha|)\frac{1+\zeta^{2}}{\zeta}\int \e\b\Big[-\frac{1}{2\zeta}(x_{i}+y_{i}s_{i})-\frac{\zeta}{2}(x_{i}-y_{i}s_{i})\Big]\,\Pi_{\alpha}(ds)\\
& 
+\frac{1-\zeta^{2}}{\zeta}\int \e\b\Big[\frac{1}{2\zeta}(x_{i}+y_{i}s_{i})+\frac{\zeta}{2}(x_{i}-y_{i}s_{i})\Big]\Big[\frac{1}{4\zeta}q_{+}-\frac{\zeta}{4}q_{-}\Big]\,\Pi_{\alpha}(ds)\\
&
+\frac{1-\zeta^{2}}{\zeta}\int \e\b\Big[-\frac{1}{2\zeta}(x_{i}+y_{i}s_{i})+\frac{\zeta}{2}(x_{i}-y_{i}s_{i})\Big]\,\Pi_{\alpha}(ds).
\end{align*}
Using Lemma \ref{obs} and then applying repeatedly Lemma \ref{oq} (with $b=1\slash 2$ or $b=1$) we obtain
\begin{align*}
|\partial_{x_{i}}h(x,y,\zeta)|
\lesssim \,&
\zeta^{-1}\int \e\b\Big[\frac{1}{\zeta}\sqrt{q_{+}}+\zeta\sqrt{q_{-}}\Big]\,\Pi_{\alpha}(ds)\\
& +\zeta^{-1}\int\ \e\b\Big[\frac{1}{\zeta}\sqrt{q_{+}}+\zeta\sqrt{q_{-}}\Big]\Big[\frac{1}{\zeta}q_{+}+\zeta q_{-}\Big]\,\Pi_{\alpha}(ds)\\
& +\zeta^{-1}\int \e\b\Big[\frac{1}{\zeta}\sqrt{q_{+}}+\zeta\sqrt{q_{-}}\Big]\,\Pi_{\alpha}(ds)\\
\lesssim\,&
\zeta^{-1}\int \big(\e\b\big)^{1\slash 4}\Big[\frac{1}{\sqrt{\zeta}}+\sqrt{\zeta}\Big]\,\Pi_{\alpha}(ds)\\
\lesssim\,&
\zeta^{-3\slash 2}
\int \big(\e\b\big)^{1\slash 4}\,\Pi_{\alpha}(ds).
\end{align*}
By the mean value theorem and the above estimates we have
\begin{align*}
|\partial_{t}G_{t}^{\a}(x,y)-\partial_{t}G_{t}^{\a}(x',y)|
\leq&
\,|x-x'||\nabla_{\!x}\partial_{t}G_{t}^{\a}(\t,y)|\\
\lesssim&
\,|x-x'|\sqrt{1-\z^2}\,\z^{-d-|\a|-3\slash 2}\int\big[\e\c\big]^{1\slash 4}\,\Pi_{\alpha}(ds),
\end{align*}
where $\t$ is a convex combination of $x$ and $x'$. Then assuming $|x-y|> 2|x-x'|$ and using Lemma \ref{lemat} shows that
\begin{align*}
|\partial_{t}G_{t}^{\a}(x,y)-\partial_{t}G_{t}^{\a}(x',y)|
\lesssim&
\,|x-x'|\sqrt{1-\z^2}\,\z^{-d-|\a|-3\slash 2}\int\big[\e\bbb\big]^{1\slash
16}\,\Pi_{\alpha}(ds).
\end{align*}
Now changing the variable according to \eqref{zz} and then using the above estimate, the Minkowski inequality, Lemma \ref{oq} (specified to $b=2d+2|\alpha|$, $c=1\slash 32$, $A=\zeta^{-1}$) and Lemma \ref{uw} (with $C=1\slash 64$) we get
\begin{align*}
\big\|\partial_{t}&G_{t}^{\a}(x,y)-\partial_{t}G_{t}^{\a}(x',y)\big\|_{L^2(tdt)}\\
&\lesssim
|x-x'|\bigg(\int_{0}^{1}\lo(\zeta)\Big(\frac{1}{\zeta}\Big)^{2d+2|\alpha|+3}\bigg(\int \big(\e\b\big)^{1\slash 16}
\,\Pi_{\alpha}(ds)\bigg)^{2}\,d\zeta\bigg)^{1\slash 2}\\
&\leq
|x-x'|\int\bigg(\int_{0}^{1}\lo(\zeta)\Big(\frac{1}{\zeta}\Big)^{2d+2|\alpha|}\Big(\frac{1}{\zeta}\Big)^{3}\big(\e\b\big)^
{1\slash 8}\,d\zeta\bigg)^{1\slash 2}\,\Pi_{\alpha}(ds)\\
&\lesssim
|x-x'|\int (q_{+})^{-d-|\alpha|}
\bigg(\int_{0}^{1}\lo(\zeta)\Big(\frac{1}{\zeta}\Big)^{3}
\big(\e\b\big)^{1\slash 16}
\,d\zeta\bigg)^{1\slash 2}\,\Pi_{\alpha}(ds)\\
&\lesssim
|x-x'|\int (q_{+})^{-d-|\alpha|-1\slash 2}\,\Pi_{\alpha}(ds).
\end{align*}
This combined with Lemma \ref{lem4} (specified to $\delta=\kappa=0$) implies the desired estimate.
\newline
The proof of the case of $g_{V,T}$ in Theorem \ref{preg} is complete.
\end {proof}
\subsection{Horizontal $g$-functions based on  $\mathbf{\{T_{t}^{\alpha}\}}$}\label{k2}

\begin {proof}[Proof of Theorem \ref{preg}; the case of $g_{H,T}^{j}$.]
We start with proving the growth condition. Without any loss of generality we may focus only on the case $j=1$. With the aid of Lemma \ref{wch} we get
$$
\delta_{1,x}G_{t}^{\alpha}(x,y)=\Big(\frac{1-\zeta^{2}}{2\zeta}\Big)^{d+|\alpha|}h(x,y,\zeta),
$$
where the auxiliary function $h$ is now given by
$$
h(x,y,\zeta)=-\int\e\b\Big[\frac{1}{2\zeta}(x_{1}+y_{1}s_{1})+\frac{\zeta}{2}(x_{1}-y_{1}s_{1})\Big]\,\Pi_{\alpha}(ds)
+x_{1}\int\e\b\,\Pi_{\alpha}(ds).
$$

Using Lemma \ref{obs}, the fact that $x_{1}\leq\sqrt{q_{+}}+\sqrt{q_{-}}$ and then Lemma \ref{oq} (with $b=1\slash 2$, $A=\zeta^{-1}$ and $A=\zeta$, respectively) we get
\begin{align*}
|h(x,y,\zeta)|
&\leq
\int\e\b\Big[\frac{1}{\zeta}\sqrt{q_{+}}+\zeta\sqrt{q_{-}}\Big]\,\Pi_{\alpha}(ds)
+\int(\sqrt{q_{+}}+\sqrt{q_{-}})\e\b\,\Pi_{\alpha}(ds)\\
&\lesssim
\int\big(\e\b\big)^{1\slash 2}\Big[\frac{1}{\sqrt{\zeta}}+\sqrt{\zeta}\Big]\,\Pi_{\alpha}(ds)
+\int\big(\e\b\big)^{1\slash 2}\Big[\sqrt{\zeta}+\frac{1}{\sqrt{\zeta}}\Big]\,\Pi_{\alpha}(ds)\\
&\lesssim
\zeta^{-1\slash 2}\int\big(\e\b\big)^{1\slash 2}\,\Pi_{\alpha}(ds).
\end{align*}
Now changing the variable according to \eqref{zz} and then using sequently the above estimate, the Minkowski inequality, Lemma \ref{oq} (applied with $b=2d+2|\alpha|-1$, $c=1\slash 4$, $A=\zeta^{-1}$) and Lemma \ref{lem5.4} (with $a=2$ and $T=\frac{1}{8}q_{+}$) we obtain
\begin{align*}
\big\|\delta_{1,x}G_{t}^{\alpha}(x,y)\big\|_{L^{2}(dt)}=&
\bigg(\int_{0}^{1}\frac{1}{1-\zeta^{2}}\Big(\frac{1-\zeta^{2}}{2\zeta}\Big)^{2d+2|\alpha|}|h(x,y,\zeta)|^{2}\,d\zeta\bigg)
^{1\slash 2}\\
\lesssim&
\bigg(\int_{0}^{1}\Big(\frac{1}{\zeta}\Big)^{2d+2|\alpha|}\frac{1}{\zeta}
\bigg(\int\Big(\e\b\Big)^{1\slash 2}\,\Pi_{\alpha}(ds)\bigg)^{2}\,d\zeta\bigg)^{1\slash 2}\\
\leq&
\int\bigg(\int_{0}^{1}\Big(\frac{1}{\zeta}\Big)^{2d+2|\alpha|-1}\Big(\frac{1}{\zeta}\Big)^{2}\e\b\,d\zeta\bigg)
^{1\slash 2}\Pi_{\alpha}(ds)\\
\lesssim&
\int(q_{+})^{-d-|\alpha|+1\slash 2}\bigg(\int_{0}^{1}\Big(\frac{1}{\zeta}\Big)^{2}\big(\e\b\big)^{1\slash 2}\,d\zeta\bigg)^{1\slash 2}\,\Pi_{\alpha}(ds)\\
\lesssim&
\int(q_{+})^{-d-|\alpha|}\,\Pi_{\alpha}(ds).
\end{align*}
In view of Lemma \ref{lem4} (specified to $\delta=\kappa=0$) the growth estimate follows.

To prove the smoothness estimates we first show the bound
$$
\big\|\delta_{1,x}G_{t}^{\a}(x,y)-\delta_{1,x}G_{t}^{\a}(x',y)\big\|_{L^2(dt)}
\lesssim
\frac{|x-x'|}{|x-y|} \;
\frac{1}{\mu_{\alpha}(B(x,|y-x|))},\qquad |x-y|>2|x-x'|.
$$
To do that, we analyze the gradient
$$
\nabla_{\!x}\delta_{1,x}G_{t}^{\alpha}(x,y)=
\Big(\frac{1-\zeta^{2}}{2\zeta}\Big)^{d+|\alpha|}\nabla_{\!x}h(x,y,\zeta).
$$ 
While treating $\partial_{x_{i}}h(x,y,\z)$ it is natural to distinguish two cases.
\newline
{\bf Case 1:} $\mathbf {i\ne 1.}$ (this case appears only for $d\geq 2$) By symmetry reasons, we may restrict to $i=2$. Then
\begin{align*}
\partial_{x_{2}}&h(x,y,\zeta)\\
=&\int\e\b\Big[\frac{1}{2\zeta}(x_{2}+y_{2}s_{2})+\frac{\zeta}{2}(x_{2}-y_{2}s_{2})\Big]
\Big[\frac{1}{2\zeta}(x_{1}+y_{1}s_{1})+\frac{\zeta}{2}(x_{1}-y_{1}s_{1})\Big]\,\Pi_{\alpha}(ds)\\
&-x_{1}\int\e\b\Big[\frac{1}{2\zeta}(x_{2}+y_{2}s_{2})+\frac{\zeta}{2}(x_{2}-y_{2}s_{2})\Big]\,\Pi_{\alpha}(ds).
\end{align*}
Using Lemma \ref{obs}, the fact that $x_{1}\leq\sqrt{q_{+}}+\sqrt{q_{-}}$ and then Lemma \ref{oq} (specified to $b=1\slash 2$, $A=\zeta^{-1}$ and $A=\zeta$, respectively) we obtain
\begin{align*}
|\partial_{x_{2}}h(x,y,\zeta)|\leq&
\int\e\b\Big[\frac{1}{\zeta}\sqrt{q_{+}}+\zeta \sqrt{q_{-}}\Big]^{2}\,\Pi_{\alpha}(ds)\\
&+\int(\sqrt{q_{+}}+\sqrt{q_{-}})\e\b\Big[\frac{1}{\zeta}\sqrt{q_{+}}+\zeta \sqrt{q_{-}}\Big]\,\Pi_{\alpha}(ds)\\
\lesssim&
\int\big(\e\b\big)^{1\slash 4}\Big[\frac{1}{\sqrt{\zeta}}+\sqrt{\z}\Big]^{2}\,\Pi_{\a}(ds)\\
\lesssim&
\,\zeta^{-1}\int\big(\e\b\big)^{1\slash 4}\,\Pi_{\a}(ds).
\end{align*}
{\bf Case 2:} $\mathbf{i=1.}$ An elementary computation shows that
\begin{align*}
\partial_{x_{1}}h(x,y,\zeta)=&
\int\e\b
\Big[\frac{1}{2\zeta}(x_{1}+y_{1}s_{1})+\frac{\zeta}{2}(x_{1}-y_{1}s_{1})\Big]^{2}\,\Pi_{\alpha}(ds)\\
&-\int\e\b\Big[\frac{1}{2\z}+\frac{\z}{2}\Big]\,\Pi_{\a}(ds)
+\int\e\b\,\Pi_{\a}(ds)\\
&-x_{1}\int\e\b
\Big[\frac{1}{2\zeta}(x_{1}+y_{1}s_{1})+\frac{\zeta}{2}(x_{1}-y_{1}s_{1})\Big]\,\Pi_{\alpha}(ds).
\end{align*}
Proceeding in the same way as before we get
\begin{align*}
|\partial_{x_{1}}h(x,y,\zeta)|\lesssim
\z^{-1}\int\big(\e\b\big)^{1\slash 4}\,\Pi_{\a}(ds).
\end{align*}
In view of the mean value theorem, the above estimates and Lemma \ref{lemat} we have
\begin{align*}
|\delta_{1,x}G_{t}^{\a}(x,y)-\delta_{1,x}G_{t}^{\a}(x',y)|
\leq&
\,|x-x'||\nabla_{\!x}\delta_{1,x}G_{t}^{\a}(\t,y)|\\
\lesssim&
\,|x-x'|\sqrt{1-\z^2}\,\z^{-d-|\a|-1}\int\big[\e\c\big]^{1\slash 4}\,\Pi_{\a}(ds)\\
\leq&
\,|x-x'|\sqrt{1-\z^2}\,\z^{-d-|\a|-1}\int\big[\e\bbb\big]^{1\slash 16}\,\Pi_{\a}(ds),
\end{align*}
provided that $|x-y|>2|x-x'|$.
Now changing the variable as in \eqref{zz} and then using sequently the above estimate, the Minkowski inequality, Lemma \ref{oq} (taken with $b=2d+2|\alpha|$, $c=1\slash 32$, $A=\zeta^{-1}$) and Lemma \ref{lem5.4} (with $a=2$ and $T=\frac{1}{64}q_{+}$) we obtain 
\begin{align*}
\big\|\delta_{1,x}&G_{t}^{\a}(x,y)-\delta_{1,x}G_{t}^{\a}(x',y)\big\|_{L^2(dt)}\\
\lesssim&
\,|x-x'|\bigg(\int_{0}^{1}\Big(\frac{1}{\z}\Big)^{2d+2|\a|+2}\bigg(
\int\big(\e\b\big)^{1\slash 16}\,\Pi_{\a}(ds)\bigg)^{2}\,d\z\bigg)^{1\slash 2}\\
\leq&
\,|x-x'|\int\bigg(\int_{0}^{1}\Big(\frac{1}{\z}\Big)^{2d+2|\a|}\Big(\frac{1}{\z}\Big)^{2}
\big(\e\b\big)^{1\slash 8}\,d\z\bigg)^{1\slash 2}\,\Pi_{\a}(ds)\\
\lesssim&
\,|x-x'|\int(q_{+})^{-d-|\a|}\bigg(\int_{0}^{1}\Big(\frac{1}{\z}\Big)^{2}\big(\e\b\big)^{1\slash 16}\,d\z\bigg)^{1\slash 2}\,\Pi_{\a}(ds)\\
\lesssim&
\,|x-x'|\int(q_{+})^{-d-|\a|-1\slash 2}\,\Pi_{\a}(ds).
\end{align*}
This together with Lemma \ref{lem4} (specified to $\delta=\kappa=0$) gives the desired bound. 

The proof will be finished once we show that
$$
\big\|\delta_{1,x}G_{t}^{\a}(x,y)-\delta_{1,x}G_{t}^{\a}(x,y')\big\|_{L^2(dt)}\\
\lesssim
\frac{|y-y'|}{|x-y|} \;
\frac{1}{\mu_{\alpha}(B(x,|y-x|))},\qquad |x-y|>2|y-y'|.
$$
Taking into account the above considerations, it suffices to verify that
$$
|\partial_{y_{i}}h(x,y,\zeta)|\lesssim
\z^{-1}\int\big(\e\b\big)^{1\slash 4}\,\Pi_{\a}(ds),\qquad i=1,\ldots,d.
$$
Again, it is convenient to consider two cases.
\newline
{\bf Case 1:} $\mathbf{i\ne 1.}$ With no loss of generality we may assume that $i=2$. Then
\begin{align*}
\partial_{y_{2}}&h(x,y,\zeta)\\
=&\int\e\b\Big[\frac{1}{2\z}(y_{2}+x_{2}s_{2})+\frac{\z}{2}(y_{2}-x_{2}s_{2})\Big]
\Big[\frac{1}{2\zeta}(x_{1}+y_{1}s_{1})+\frac{\zeta}{2}(x_{1}-y_{1}s_{1})\Big]\,\Pi_{\alpha}(ds)\\
&
-x_{1}\int\e\b\Big[\frac{1}{2\z}(y_{2}+x_{2}s_{2})+\frac{\z}{2}(y_{2}-x_{2}s_{2})\Big]\,\Pi_{\alpha}(ds).
\end{align*}
Using Lemma \ref{obs}, the fact that $x_{1}\leq\sqrt{q_{+}}+\sqrt{q_{-}}$ and then Lemma \ref{oq} (applied with $b=1\slash 2$, $A=\zeta^{-1}$ and $A=\zeta$, respectively) we get
\begin{align*}
|\partial_{y_{2}}h(x,y,\zeta)|\leq&
\int\e\b\Big[\frac{1}{\zeta}\sqrt{q_{+}}+\zeta \sqrt{q_{-}}\Big]^{2}\,\Pi_{\alpha}(ds)\\
&+\int(\sqrt{q_{+}}+\sqrt{q_{-}})\e\b\Big[\frac{1}{\zeta}\sqrt{q_{+}}+\zeta \sqrt{q_{-}}\Big]\,\Pi_{\alpha}(ds)\\
\lesssim&
\int\big(\e\b\big)^{1\slash 4}\Big[\frac{1}{\sqrt{\zeta}}+\sqrt{\z}\Big]^{2}\,\Pi_{\a}(ds)\\
\lesssim&
\,\zeta^{-1}\int\big(\e\b\big)^{1\slash 4}\,\Pi_{\a}(ds).
\end{align*}
{\bf Case 2:} $\mathbf{i=1.}$ An easy computation produces
\begin{align*}
\partial_{y_{1}}&h(x,y,\zeta)\\
=&\int\e\b\Big[\frac{1}{2\z}(y_{1}+x_{1}s_{1})+\frac{\z}{2}(y_{1}-x_{1}s_{1})\Big]
\Big[\frac{1}{2\zeta}(x_{1}+y_{1}s_{1})+\frac{\zeta}{2}(x_{1}-y_{1}s_{1})\Big]\,\Pi_{\alpha}(ds)\\
&-x_{1}\int\e\b\Big[\frac{1}{2\z}(y_{1}+x_{1}s_{1})+\frac{\z}{2}(y_{1}-x_{1}s_{1})\Big]\,\Pi_{\alpha}(ds)\\
&-\int\e\b\Big[\frac{1}{2\z}s_{1}-\frac{\z}{2}s_{1}\Big]\,\Pi_{\alpha}(ds).
\end{align*}
Parallel arguments to those from Case 1 and the fact that $|s_{1}|\leq 1$ lead to the bound 
$$
|\partial_{y_{1}}h(x,y,\zeta)|\lesssim
\zeta^{-1}\int\big(\e\b\big)^{1\slash 4}\,\Pi_{\a}(ds).
$$
The proof of the case of $g_{H,T}^{j}$ in Theorem \ref{preg} is complete.
\end {proof}
\subsection{$g$-functions based on  $\mathbf{\{P_{t}^{\alpha}\}}$}\label{k3}
$\newline$
In this section our aim is to prove the relevant estimates for the vector-valued kernels $\big\{\partial_{t}P_{t}^{\a}(x,y)\big\}_{t>0}$ and $\big\{\delta_{j,x}P_{t}^{\a}(x,y)\big\}_{t>0}$, $j=1,\ldots,d$.
This will be achieved by means of the subordination principle and the kernels' estimates already obtained in Sections \ref{k1} and \ref{k2}; see \cite[p.\,114]{StTo2} for a related discussion concerning vertical $g$-functions based on general contraction semigroups.

\begin {proof}[Proof of Theorem \ref{preg}; the case of $g_{V,P}$.]
We first deal with the growth estimate. By the subordination principle
$$
\partial_{t}P_{t}^{\a}(x,y)=
\frac{1}{\sqrt{\pi}}\int_{0}^{\infty}\frac{e^{-u}}{\sqrt{u}}\partial_{t}\big(G_{t^{2}\slash 4u}^{\a}(x,y)\big)\,du=
\frac{1}{\sqrt{\pi}}\int_{0}^{\infty}\frac{e^{-u}}{\sqrt{u}}
\frac{t}{2u}\partial_{\tau}G_{\tau}^{\a}(x,y)\big|_{\tau=t^{2}\slash 4u}\,du.
$$
Exchanging above $\partial_{t}$ with the integral sign is justified by Lemma \ref{wch} adjusted by replacing $[-1,1]^{d}$ with $(0,\infty)$ and $\Pi_{\a}(ds)$ with $du$ and then applied with $F(t,u)=\frac{e^{-u}}{\sqrt{u}}G_{t^2\slash 4u}^{\a}(x,y)$ and $f_{a,v}(u)\sim \frac{e^{-u}}{\sqrt{u}}|x-y|^{-2d-2|\a|-2}$. Indeed, $f_{a,v}\in L^{1}(du)$ and (see Section \ref{k1})
\begin{align*}
\frac{t}{2u}\Big|\partial_{\tau}G_{\tau}^{\a}(x,y)\big|_{\tau=t^2\slash 4u}\Big|
\lesssim&
\frac{t}{u}\Big(\frac{1-\z^{2}(t^2\slash 4u)}{2\z(t^2\slash 4u)}\Big)^{d+|\a|}\frac{1}{\z(t^2\slash 4u)}\int\big(\e(\z(t^2\slash 4u),q_{\pm})\big)^{1\slash 2}\,\Pi_{\a}(ds)\\
\lesssim&
\,t\frac{\sqrt{1-\z^{2}(t^2\slash 4u)}}{u}|x-y|^{-2d-2|\a|-2}\\
\lesssim&
\,|x-y|^{-2d-2|\a|-2};
\end{align*}
the last estimate holds because the function $(t,u)\mapsto t\frac{\sqrt{1-\z^{2}(t^2\slash 4u)}}{u}$ is bounded on $[a,v]\times(0,\infty)$.

Now using the Minkowski inequality and then changing the variable $s=t^{2}\slash 4u$ we obtain
\begin{align*}
\big\|\partial_{t}P_{t}^{\a}(x,y)\big\|_{L^{2}(tdt)}=&
\bigg(\int_{0}^{\infty}t\bigg(\frac{1}{\sqrt{\pi}}\int_{0}^{\infty}\frac{e^{-u}}{\sqrt{u}}
\frac{t}{2u}\partial_{\tau}G_{\tau}^{\a}(x,y)\big|_{\tau=t^{2}\slash 4u}\,du\bigg)^{2}\,dt\bigg)^{1\slash 2}\\
\leq&
\int_{0}^{\infty}\bigg(\int_{0}^{\infty}t\frac{e^{-2u}}{u}\bigg(\frac{t}{2u}
\partial_{\tau}G_{\tau}^{\a}(x,y)\big|_{\tau=t^{2}\slash 4u}\bigg)^{2}\,dt\bigg)^{1\slash 2}\,du\\
=&
\int_{0}^{\infty}\frac{e^{-u}}{\sqrt{u}}\bigg(\int_{0}^{\infty}2s\big(\partial_{s}G_{s}^{\a}(x,y)\big)^{2}\,ds\bigg)^{1\slash 2}\,du.
\end{align*}
Combining this with the growth estimate from Theorem \ref{preg} (the case of $g_{V,T}$) we get the growth estimate for $\big\{\partial_{t}P_{t}^{\a}(x,y)\big\}$.

It remains to prove the smoothness estimates. By symmetry reasons, it suffices to show that
$$
\big\|\partial_{t}P_{t}^{\a}(x,y)-\partial_{t}P_{t}^{\a}(x',y)\big\|_{L^2(tdt)}
\lesssim
\frac{|x-x'|}{|x-y|} \;
\frac{1}{\mu_{\alpha}(B(x,|y-x|))},\qquad |x-y|>2|x-x'|.
$$
Using the Minkowski inequality and then changing the variable $s=t^{2}\slash 4u$ we get
\begin{align*}
\big\|\partial_{t}&P_{t}^{\a}(x,y)-\partial_{t}P_{t}^{\a}(x',y)\big\|_{L^2(tdt)}\\
=&
\bigg(\int_{0}^{\infty}t\bigg(\frac{1}{\sqrt{\pi}}\int_{0}^{\infty}\frac{e^{-u}}{\sqrt{u}}
\frac{t}{2u}\big(\partial_{\tau}G_{\tau}^{\a}(x,y)
-\partial_{\tau}G_{\tau}^{\a}(x',y)
\big)\big|_{\tau=t^{2}\slash 4u}\,du\bigg)^{2}\,dt\bigg)^{1\slash 2}\\
\leq&
\int_{0}^{\infty}\bigg(\int_{0}^{\infty}t\frac{e^{-2u}}{u}\bigg(\frac{t}{2u}
\big(\partial_{\tau}G_{\tau}^{\a}(x,y)
-\partial_{\tau}G_{\tau}^{\a}(x',y)\big)\big|_{\tau=t^{2}\slash 4u}\bigg)^{2}\,dt\bigg)^{1\slash 2}\,du\\
=&
\int_{0}^{\infty}\frac{e^{-u}}{\sqrt{u}}\bigg(\int_{0}^{\infty}
2s\big(\partial_{s}G_{s}^{\a}(x,y)-\partial_{s}G_{s}^{\a}(x',y)\big)^{2}\,ds\bigg)^{1\slash 2}\,du.
\end{align*}
Now the smoothness estimate from Theorem \ref{preg} (the case of $g_{V,T}$) comes into play and the conclusion follows. This finishes proving the case of $g_{V,P}$ in Theorem \ref{preg}.
\end {proof}


\begin {proof}[Proof of Theorem \ref{preg}; the case of $g_{H,P}^{j}$.]
As usually, we first show the growth condition. By the subordination principle and with the aid of (suitably adjusted) Lemma \ref{wch} we get
$$
\delta_{j,x}P_{t}^{\a}(x,y)=
\frac{1}{\sqrt{\pi}}\int_{0}^{\infty}\frac{e^{-u}}{\sqrt{u}}\delta_{j,x}G_{t^{2}\slash 4u}^{\a}(x,y)\,du.
$$
The Minkowski inequality and the change of variable $s=t^{2}\slash 4u$ lead to
$$
\big\|\delta_{j,x}P_{t}^{\a}(x,y)\big\|_{L^{2}(tdt)}
\leq
\frac{1}{\sqrt{\pi}}\int_{0}^{\infty}\frac{e^{-u}}{\sqrt{u}}\bigg(\int_{0}^{\infty}2u\big(\delta_{j,x}G_{s}^{\a}(x,y)\big)^{2}\,ds\bigg)^{1\slash 2}\,du,
$$
which in view of Theorem \ref{preg} (the case of $g_{H,T}^{j}$) gives the desired conclusion. 

To prove the smoothness estimates it is enough to verify that 
$$
\big\|\delta_{j,x}P_{t}^{\a}(x,y)-\delta_{j,x}P_{t}^{\a}(x',y)\big\|_{L^2(tdt)}
\lesssim
\frac{|x-x'|}{|x-y|} \;
\frac{1}{\mu_{\alpha}(B(x,|y-x|))},\qquad |x-y|>2|x-x'|;
$$
parallel arguments show that the other smoothness estimate holds. Using the Minkowski inequality and then changing the variable as before we obtain
\begin{align*}
\big\|\delta_{j,x}&P_{t}^{\a}(x,y)-\delta_{j,x}P_{t}^{\a}(x',y)\big\|_{L^2(tdt)}\\
\leq&
\frac{1}{\sqrt{\pi}}\int_{0}^{\infty}\frac{e^{-u}}{\sqrt{u}}\bigg(\int_{0}^{\infty}
2u\big(\delta_{j,x}G_{s}^{\a}(x,y)
-\delta_{j,x}G_{s}^{\a}(x',y)\big)^{2}\,ds\bigg)^{1\slash 2}\,du.
\end{align*}
The relevant bound follows now from the already proved case of $g_{H,T}^{j}$ in Theorem \ref{preg}. 
\end {proof}
\subsection{Vertical $g$-functions based on  $\mathbf{\{\widetilde T_{t}^{\a,j}\}}$}\label{k4}

\begin {proof}[Proof of Theorem \ref{preg}; the case of $g_{V,\widetilde T}^{j}$.]
Without any loss of generality we may focus only on the case $j=1$. Differentiating \eqref{falka} we get
\begin{equation}\label{nier3}
\partial_{t}\widetilde G_{t}^{\a,1}(x,y)=
e^{-2t}x_{1}y_{1}\big(-2G_{t}^{\a+e_{1}}(x,y)+\partial_{t}G_{t}^{\a+e_{1}}(x,y)\big).
\end{equation}
Using the estimates of $G_{t}^{\a}(x,y)$ and $\partial_{t}G_{t}^{\a}(x,y)$ from Section \ref{k1} we see that
\begin{align*}
\big|\partial_{t}\widetilde G_{t}^{\a,1}(x,y)\big|
\lesssim&
\,x_{1}y_{1}\Big(\frac{1-\z^2}{2\z}\Big)^{d+|\a|+1}\int\e\b\,\Pi_{\a+e_{1}}(ds)\\
&+
\,x_{1}y_{1}\Big(\frac{1-\z^2}{2\z}\Big)^{d+|\a|+1}
\z^{-1}\int\big(\e\b\big)^{1\slash 2}\,\Pi_{\a+e_{1}}(ds)\\
\lesssim&
\,x_{1}y_{1}\Big(\frac{1-\z^2}{2\z}\Big)^{d+|\a|+1}\z^{-1}\int\big(\e\b\big)^{1\slash 2}\,\Pi_{\a+e_{1}}(ds).
\end{align*}
Changing the variable as in \eqref{zz} and then applying the above estimate, the Minkowski inequality, Lemma \ref{oq} (specified to $b=2d+2|\a|+1$, $c=1\slash 4$, $A=\z^{-1}$) and Lemma \ref{uw} (with $C=1\slash 8$) we obtain
\begin{align*}
\big\|\partial_{t}&\widetilde G_{t}^{\alpha,1}(x,y)\big\|_{L^{2}(tdt)}\\
=&
\bigg(\int_{0}^{1}\frac{1}{2}\lo(\z)\frac{1}{1-\z^2}\big(\partial_{t}\widetilde G_{t}^{\alpha,1}(x,y)\big)^{2}\,d\z\bigg)^{1\slash 2}\\
\lesssim&
\bigg(\int_{0}^{1}\lo(\z)\frac{1}{1-\z^2}x_{1}^{2}y_{1}^{2}\Big(\frac{1-\z^2}{2\z}\Big)^{2d+2|\a|+2}\Big(\frac{1}{\z}\Big)^{2}
\bigg(\int\big(\e\b\big)^{1\slash 2}\,\Pi_{\a+e_{1}}(ds)\bigg)^{2}\,d\z\bigg)^{1\slash 2}\\
\leq&
\int\bigg(\int_{0}^{1}\lo(\z)x_{1}^{2}y_{1}^{2}\Big(\frac{1}{\z}\Big)^{2d+2|\a|+4}
\e\b\,d\z\bigg)^{1\slash 2}\,\Pi_{\a+e_{1}}(ds)\\
\lesssim&
\,x_{1}y_{1}\int(q_{+})^{-d-|\a|-1\slash 2}\bigg(\int_{0}^{1}\lo(\z)\Big(\frac{1}{\z}\Big)^{3}\big(\e\b\big)^{1\slash 2}\,d\z\bigg)^{1\slash 2}\,\Pi_{\a+e_{1}}(ds)\\
\lesssim&
(x+y)^{2e_{1}}\int(q_{+})^{-d-|\a|-1}\,\Pi_{\a+e_{1}}(ds).
\end{align*}
Now the growth estimate follows by Lemma \ref{lem4} (taken with $\delta=e_{1}$ and $\kappa=0$).

It remains to prove the smoothness estimates. By symmetry reasons, it suffices to show that
$$
\big\|\partial_{t}\widetilde G_{t}^{\a,1}(x,y)-\partial_{t}\widetilde G_{t}^{\a,1}(x',y)\big\|_{L^2(tdt)}
\lesssim
\frac{|x-x'|}{|x-y|} \;
\frac{1}{\mu_{\alpha}(B(x,|y-x|))},\qquad |x-y|>2|x-x'|.
$$
Define (see \eqref{nier3})
$$
\varphi(x,y,\z)=\partial_{t}\widetilde G_{t}^{\a,1}(x,y)\slash x_{1}.
$$
Then by the mean value theorem
\begin{align*}
\big|\partial_{t}\widetilde G_{t}^{\a,1}(x,y)-\partial_{t}\widetilde G_{t}^{\a,1}(x',y)\big|
=&
\,|x_{1}\v(x,y,\z)-x_{1}'\v(x',y,\z)|\\
\leq&
\,\big|(x_{1}-x_{1}')\v(x',y,\z)\big|
+
\big|x_{1}\big(\v(x,y,\z)-\v(x',y,\z)\big)\big|\\
\leq&
\,|x-x'||\v(x',y,\z)|
+
x_{1}|x-x'|\big|\nabla_{\!x}\v(\t,y,\z)\big|,
\end{align*}
where $\t$ is a convex combination of $x$ and $x'$.
We will treat separately each of the two terms in the last expression.

In view of estimates that already appeared in this proof and Lemma \ref{lemat} we have
\begin{align*}
\big|\v(x',y,\z)\big|
\lesssim&
\,y_{1}\Big(\frac{1-\z^2}{2\z}\Big)^{d+|\a|+1}\z^{-1}\int\big[\e\ccc\big]^{1\slash 2}\,\Pi_{\a+e_{1}}(ds)\\
\leq&
\,y_{1}\sqrt{1-\z^2}\,\z^{-d-|\a|-2}\int\big[\e\bbb\big]^{1\slash 8}\,\Pi_{\a+e_{1}}(ds),
\end{align*}
provided that $|x-y|>2|x-x'|$. Proceeding as in the case of the growth condition we obtain
\begin{align*}
\big\|\v(x',y,\z(t))\big\|_{L^2(tdt)}
\lesssim
(x+y)^{e_{1}}\int(q_{+})^{-d-|\a|-1}\,\Pi_{\a+e_{1}}(ds).
\end{align*}
Then Lemma \ref{lem4} (taken with $\delta=e_{1}\slash 2$ and $\kappa=e_{1}\slash 2$) shows that the quantity $|x-x'||\v(x',y,\z)|$ satisfies the smoothness estimate, as desired.

To finish the proof we must verify the same for the quantity $x_{1}|x-x'|\big|\nabla_{\!x}\v(\t,y,\z)\big|$, and this boils down to showing that for any $i=1,\ldots,d$
\begin{align*}
\big\|x_{1}\partial_{x_{i}}\v(\t,y,\z(t))\big\|_{L^2(tdt)}
\lesssim
\frac{1}{|x-y|} \;
\frac{1}{\mu_{\alpha}(B(x,|y-x|))},\qquad |x-y|>2|x-x'|.
\end{align*}
Observe that (see \eqref{nier3})
\begin{align*}
\partial_{x_{i}}\v(\t,y,\z)=
e^{-2t}y_{1}\big(-2\partial_{x_{i}}G_{t}^{\a+e_{1}}(\t,y)+\partial_{x_{i}}\partial_{t}G_{t}^{\a+e_{1}}(\t,y)\big).
\end{align*}
Using the estimate of $\partial_{x_{i}}G_{t}^{\a}(x,y)$ given implicitly in Section \ref{k2} and the estimate of $\partial_{x_{i}}\partial_{t}G_{t}^{\a}(x,y)$ that can be read off from Section \ref{k1}, and then Lemma \ref{lemat}, we get
\begin{align*}
x_{1}\big|\partial_{x_{i}}\v(\t,y,\z)\big|
\lesssim&
\,x_{1}y_{1}\Big(\frac{1-\z^2}{2\z}\Big)^{d+|\a|+1}\z^{-1\slash 2}\int\big[\e\c\big]^{1\slash 2}\,\Pi_{\a+e_{1}}(ds)\\
&+
\,x_{1}y_{1}\Big(\frac{1-\z^2}{2\z}\Big)^{d+|\a|+1}\z^{-3\slash 2}\int\big[\e\c\big]^{1\slash 4}\,\Pi_{\a+e_{1}}(ds)\\
\lesssim&
\,x_{1}y_{1}\sqrt{1-\z^2}\,\z^{-d-|\a|-5\slash 2}\int\big[\e\bbb\big]^{1\slash 16}\,\Pi_{\a+e_{1}}(ds),
\end{align*}
provided that $|x-y|>2|x-x'|$.
Changing the variable as in \eqref{zz} and then using sequently the above estimate, the Minkowski inequality, Lemma \ref{oq} (applied with $b=2d+2|\a|+2$, $c=1\slash 32$, $A=\z^{-1}$) and Lemma \ref{uw} (with $C=1\slash 64$) we obtain
\begin{align*}
\big\|x_{1}\partial_{x_{i}}&\v(\t,y,\z(t))\big\|_{L^2(tdt)}\\
\lesssim&
\bigg(\int\lo(\z)x_{1}^{2}y_{1}^{2}\Big(\frac{1}{\z}\Big)^{2d+2|\a|+5}
\bigg(\int\big(\e\b\big)^{1\slash 16}\,\Pi_{\a+e_{1}}(ds)\bigg)^{2}\,d\z\bigg)^{1\slash 2}\\
\leq&
\int\bigg(\int_{0}^{1}\lo(\z)x_{1}^{2}y_{1}^{2}\Big(\frac{1}{\z}\Big)^{2d+2|\a|+5}
\big(\e\b\big)^{1\slash 8}\,d\z\bigg)^{1\slash 2}\,\Pi_{\a+e_{1}}(ds)\\
\lesssim&
\,x_{1}y_{1}\int(q_{+})^{-d-|\a|-1}\bigg(\int_{0}^{1}\lo(\z)\Big(\frac{1}{\z}\Big)^{3}\big(\e\b\big)^{1\slash 16}\,d\z\bigg)^{1\slash 2}\,\Pi_{\a+e_{1}}(ds)\\
\lesssim&
(x+y)^{2e_{1}}\int(q_{+})^{-d-|\a|-3\slash 2}\,\Pi_{\a+e_{1}}(ds).
\end{align*}
Now an application of Lemma \ref{lem4} (with $\delta=e_{1}$ and $\kappa=0$) leads to the desired conclusion. 

The proof of the case of $g_{V,\widetilde T}^{j}$ in Theorem \ref{preg} is complete.
\end {proof}
\subsection{Horizontal $g$-functions based on  $\mathbf{\{\widetilde T_{t}^{\a,j}\}}$}\label{k5}

\begin {proof}[Proof of Theorem \ref{preg}; the case of $g_{H,\widetilde T}^{j,i}$, $j\ne i$.]
By symmetry reasons we may focus on the case $j=1,$ $i=2$. With the aid of \eqref{falka} and Lemma \ref{wch} we get
\begin{equation}\label{nier4}
\delta_{2,x}\widetilde G_{t}^{\alpha,1}(x,y)=
\,x_{1}y_{1}\frac{1-\z}{1+\z}\Big(\frac{1-\z^2}{2\z}\Big)^{d+|\a|+1}h(x,y,\z),
\end{equation}
where the auxiliary function $h$ is now given by
\begin{align*}
h(x,y,\z)=&
-\int\e\b\Big[\frac{1}{2\z}(x_{2}+y_{2}s_{2})+\frac{\z}{2}(x_{2}-y_{2}s_{2})\Big]\,\Pi_{\a+e_{1}}(ds)\\
&+
x_{2}\int\e\b\,\Pi_{\a+e_{1}}(ds).
\end{align*}
Using Lemma \ref{obs}, the fact that $x_{2}\leq \sqrt{q_{+}}+\sqrt{q_{-}}$ and then Lemma \ref{oq} (with $b=1\slash 2$, $A=\z^{-1}$ and $A=\z$, respectively) we see that
\begin{align}
|h(x,y,\z)|\leq&
\int\e\b\Big[\frac{1}{\z}\sqrt{q_{+}}+\z\sqrt{q_{-}}\Big]\Pi_{\a+e_{1}}(ds)
+\int(\sqrt{q_{+}}+\sqrt{q_{-}})\e\b\Pi_{\a+e_{1}}(ds)\nonumber\\
\lesssim&
\,\z^{-1\slash 2}\int\big(\e\b\big)^{1\slash 2}\Pi_{\a+e_{1}}(ds)\label{nier}.
\end{align}
Now changing the variable according to \eqref{zz} and then using sequently the above estimate, the Minkowski inequality, Lemma \ref{oq} (taken with $b=2d+2|\alpha|+1$, $c=1\slash 4$, $A=\zeta^{-1}$) and Lemma \ref{lem5.4} (with $a=2$ and $T=\frac{1}{8}q_{+}$) we obtain
\begin{align*}
\big\|\delta_{2,x}\widetilde G_{t}^{\alpha,1}(x,y)\big\|_{L^{2}(dt)}=&
\bigg(\int_{0}^{1}\frac{1}{1-\z^2}x_{1}^2y_{1}^2\Big(\frac{1-\z}{1+\z}\Big)^{2}\Big(\frac{1-\z^2}{2\z}\Big)^{2d+2|\a|+2}
|h(x,y,\z)|^2\,d\z \bigg)^{1\slash 2}\\
\lesssim&
\,x_{1}y_{1}\bigg(\int_{0}^{1}\Big(\frac{1}{\z}\Big)^{2d+2|\a|+2}\frac{1}{\z}\bigg(
\int\big(\e\b\big)^{1\slash 2}\,\Pi_{\a+e_{1}}\bigg)^{2}\,d\z\bigg)^{1\slash 2}\\
\leq&
\,x_{1}y_{1}\int\bigg(\int_{0}^{1}\Big(\frac{1}{\z}\Big)^{2d+2|\a|+1}
\Big(\frac{1}{\z}\Big)^{2}\e\b\,d\z\bigg)^{1\slash 2}\,\Pi_{\a+e_{1}}(ds)\\
\lesssim&
(x+y)^{2e_{1}}\int(q_{+})^{-d-|\a|-1\slash 2}\bigg(\int_{0}^{1}\Big(\frac{1}{\z}\Big)^{2}
\big(\e\b\big)^{1\slash 2}\,d\z\bigg)^{1\slash 2}\,\Pi_{\a+e_{1}}(ds)\\
\lesssim&
(x+y)^{2e_{1}}\int(q_{+})^{-d-|\a|-1}\,\Pi_{\a+e_{1}}(ds).
\end{align*}
In view of Lemma \ref{lem4} (applied with $\delta=e_{1}$ and $\kappa=0$) the growth estimate follows.

To prove the smoothness conditions we first show that
$$
\big\|\delta_{2,x}\widetilde G_{t}^{\alpha,1}(x,y)-
\delta_{2,x}\widetilde G_{t}^{\alpha,1}(x',y)\big\|_{L^2(dt)}\lesssim
\frac{|x-x'|}{|x-y|} \;
\frac{1}{\mu_{\alpha}(B(x,|y-x|))},\qquad |x-y|>2|x-x'|.
$$ 
Define (see \eqref{nier4})
$$
\v(x,y,\z)=\delta_{2,x}\widetilde G_{t}^{\alpha,1}(x,y)\slash x_{1}.
$$
Then by the mean value theorem we have
\begin{align*}
\big|\delta_{2,x}\widetilde G_{t}^{\alpha,1}(x,y)-
\delta_{2,x}\widetilde G_{t}^{\alpha,1}(x',y)\big|
\leq
|x-x'||\v(x',y,\z)|
+
x_{1}|x-x'|\big|\nabla_{\!x}\v(\t,y,\z)\big|.
\end{align*}
We will treat separately each of the two terms in the right-hand side above.

Using the estimate \eqref{nier} and then Lemma \ref{lemat} we get
\begin{align}
\big|\v(x',y,\z)\big|
\lesssim&
\,y_{1}\Big(\frac{1-\z^2}{2\z}\Big)^{d+|\a|+1}
\z^{-1\slash 2}\int\big[\e\ccc\big]^{1\slash 2}\Pi_{\a+e_{1}}(ds)\nonumber\\
\leq&
\,(x_{1}+y_{1})\sqrt{1-\z^2}\,\z^{-d-|\a|-3\slash 2}\int\big[\e\bbb\big]^{1\slash 8}\Pi_{\a+e_{1}}(ds),\label{nier5}
\end{align}
provided that $|x-y|>2|x-x'|$.
Proceeding in a similar way as in the first part of the proof we obtain
\begin{align*}
\big\|\v(x',y,\z)\big\|_{L^2(dt)}
\lesssim&
(x+y)^{e_{1}}\int(q_{+})^{-d-|\a|-1}\,\Pi_{\a+e_{1}}(ds),
\end{align*}
which in view of Lemma \ref{lem4} (taken with $\delta=e_{1}\slash 2$ and $\kappa=e_{1}\slash 2$) gives the relevant bound of $|x-x'||\v(x',y,\z)|$. To bound suitably $x_{1}|x-x'|\big|\nabla_{\!x}\v(\t,y,\z)\big|$ it suffices to show that for any $k=1,\ldots,d,$
\begin{align*}
\big\|x_{1}\partial_{x_{k}}\v(\t,y,\z(t))\big\|_{L^2(dt)}
\lesssim
\frac{1}{|x-y|} \;
\frac{1}{\mu_{\alpha}(B(x,|y-x|))},\qquad |x-y|>2|x-x'|.
\end{align*}

We have
\begin{align*}
\partial_{x_{k}}\v(x,y,\z)=&
\,y_{1}\frac{1-\z}{1+\z}\Big(\frac{1-\z^2}{2\z}\Big)^{d+|\a|+1}\partial_{x_{k}}h(x,y,\z),\qquad k=1,\ldots,d.
\end{align*}
While estimating $\partial_{x_{k}}h(x,y,\z)$ it is convenient to distinguish two cases.
\newline
{\bf Case 1:} $\mathbf {k\ne 2.}$ We have
\begin{align*}
\partial_{x_{k}}&h(x,y,\z)\\
=&
\int\e\b\Big[\frac{1}{2\z}(x_{k}+y_{k}s_{k})+\frac{\z}{2}(x_{k}-y_{k}s_{k})\Big]
\Big[\frac{1}{2\z}(x_{2}+y_{2}s_{2})+\frac{\z}{2}(x_{2}-y_{2}s_{2})\Big]\,\Pi_{\a+e_{1}}(ds)\\
&-
x_{2}\int\e\b\Big[\frac{1}{2\z}(x_{k}+y_{k}s_{k})+\frac{\z}{2}(x_{k}-y_{k}s_{k})\Big]
\,\Pi_{\a+e_{1}}(ds).
\end{align*}
Using Lemma \ref{obs}, the fact that $x_{2}\leq\sqrt{q_{+}}+\sqrt{q_{-}}$ and then Lemma \ref{oq} (specified to $b=1\slash 2$, $A=\zeta^{-1}$ and $A=\zeta$, respectively) we obtain
\begin{align*}
|\partial_{x_{k}}h(x,y,\z)|\leq&
\int\e\b\Big[\frac{1}{\z}\sqrt{q_{+}}+\z\sqrt{q_{-}}\Big]^2\,\Pi_{\a+e_{1}}(ds)\\
&+
\int(\sqrt{q_{+}}+\sqrt{q_{-}})\e\b\Big[\frac{1}{\z}\sqrt{q_{+}}+\z\sqrt{q_{-}}\Big]\,\Pi_{\a+e_{1}}(ds)\\
\lesssim&
\,\z^{-1}\int\big(\e\b\big)^{1\slash 4}\,\Pi_{\a+e_{1}}(ds).
\end{align*}
{\bf Case 2:} $\mathbf {k=2.}$
An elementary computation shows that
\begin{align*}
\partial_{x_{2}}h(x,y,\z)=&
\int\e\b\Big[\frac{1}{2\z}(x_{2}+y_{2}s_{2})+\frac{\z}{2}(x_{2}-y_{2}s_{2})\Big]^{2}\,\Pi_{\a+e_{1}}(ds)\\
&-
\int\e\b\Big[\frac{1}{2\z}+\frac{\z}{2}\Big]\,\Pi_{\a+e_{1}}(ds)
+
\int\e\b\,\Pi_{\a+e_{1}}(ds)\\
&-
x_{2}\int\e\b\Big[\frac{1}{2\z}(x_{2}+y_{2}s_{2})+\frac{\z}{2}(x_{2}-y_{2}s_{2})\Big]\,\Pi_{\a+e_{1}}(ds).
\end{align*}
Proceeding similarly as in Case 1 we get
$$
|\partial_{x_{2}}h(x,y,\z)|\lesssim
\z^{-1}\int\big(\e\b\big)^{1\slash 4}\,\Pi_{\a+e_{1}}(ds).
$$ 
Using the above estimates of $\partial_{x_{k}}h(x,y,\z)$ and Lemma \ref{lemat} we see that
\begin{align}
x_{1}\big|\partial_{x_{k}}\v(\t,y,\z(t))\big|
\lesssim&
\,x_{1}y_{1}\Big(\frac{1-\z^2}{2\z}\Big)^{d+|\a|+1}
\z^{-1}\int\big[\e\c\big]^{1\slash 4}\,\Pi_{\a+e_{1}}(ds)\nonumber\\
\leq&
\,x_{1}y_{1}\sqrt{1-\z^2}\,\z^{-d-|\a|-2}\int\big[\e\bbb\big]^{1\slash 16}\,\Pi_{\a+e_{1}}(ds),\label{nier6}
\end{align}
provided that $|x-y|>2|x-x'|$. Changing the variable according to \eqref{zz} and then applying the Minkowski inequality, Lemma \ref{oq} (specified to $b=2d+2|\alpha|+2$, $c=1\slash 32$, $A=\zeta^{-1}$) and Lemma \ref{lem5.4} (with $a=2$ and $T=\frac{1}{64}q_{+}$) we get
\begin{align*}
\big\|x_{1}\partial_{x_{k}}&\v(\t,y,\z(t))\big\|_{L^2(dt)}\\
\lesssim&
\bigg(\int_{0}^{1}x_{1}^2y_{1}^2
\Big(\frac{1}{\z}\Big)^{2d+2|\a|+4}\bigg(\int\big(\e\b\big)^{1\slash 16}\,\Pi_{\a+e_{1}}(ds)\bigg)^{2}\,d\z \bigg)^{1\slash 2}\\
\leq&
\,x_{1}y_{1}\int\bigg(\int_{0}^{1}\Big(\frac{1}{\z}\Big)^{2d+2|\a|+4}
\big(\e\b\big)^{1\slash 8}\,d\z\bigg)^{1\slash 2}\,\Pi_{\a+e_{1}}(ds)\\
\lesssim&
(x+y)^{2e_{1}}\int(q_{+})^{-d-|\a|-1}\bigg(\int_{0}^{1}\Big(\frac{1}{\z}\Big)^{2}
\big(\e\b\big)^{1\slash 16}\,d\z\bigg)^{1\slash 2}\,\Pi_{\a+e_{1}}(ds)\\
\lesssim&
(x+y)^{2e_{1}}\int(q_{+})^{-d-|\a|-3\slash 2}\,\Pi_{\a+e_{1}}(ds).
\end{align*}
Now Lemma \ref{lem4} (taken with $\delta=e_{1}$ and $\kappa=0$) implies the required bound.

The proof will be finished once we show that
$$
\big\|\delta_{2,x}\widetilde G_{t}^{\alpha,1}(x,y)-
\delta_{2,x}\widetilde G_{t}^{\alpha,1}(x,y')\big\|_{L^2(dt)}\lesssim
\frac{|y-y'|}{|x-y|} \;
\frac{1}{\mu_{\alpha}(B(x,|y-x|))},\qquad |x-y|>2|y-y'|.
$$ 
Define (see \eqref{nier4})
$$
\phi(x,y,\z)=\delta_{2,x}\widetilde G_{t}^{\alpha,1}(x,y)\slash y_{1}.
$$
By the mean value theorem we have
\begin{align*}
\big|\delta_{2,x}\widetilde G_{t}^{\alpha,1}(x,y)-
\delta_{2,x}\widetilde G_{t}^{\alpha,1}(x,y')\big|
\leq
|y-y'||\phi(x,y',\z)|
+
y_{1}|y-y'|\big|\nabla_{\!y}\phi(x,\t,\z)\big|,
\end{align*}
where $\t$ is a convex combination of $y$ and $y'$.
Parallel arguments to those used in the proof of the growth condition and Lemma \ref{lemat} lead to
\begin{align*}
\big\|\phi(x,y',\z(t))\big\|_{L^2(dt)}
\lesssim
\frac{1}{|x-y|} \;
\frac{1}{\mu_{\alpha}(B(x,|y-x|))},\qquad |x-y|>2|y-y'|.
\end{align*}
This implies the desired estimate of the term $|y-y'||\phi(x,y',\z)|$. To bound suitably the remaining term $y_{1}|y-y'|\big|\nabla_{\!y}\phi(x,\t,\z)\big|$ it is enough, in view of the above considerations, to verify that
$$
|\partial_{y_{k}}h(x,y,\z)|\lesssim
\z^{-1}\int\big(\e(\z,q_{\pm})\big)^{1\slash 4}\,\Pi_{\a+e_{1}}(ds),\qquad  k=1,\ldots,d,
$$
since
\begin{align*}
\partial_{y_{k}}\phi(x,y,\z)=&
\,x_{1}\frac{1-\z}{1+\z}\Big(\frac{1-\z^2}{2\z}\Big)^{d+|\a|+1}\partial_{y_{k}}h(x,y,\z),\qquad k=1,\ldots,d.
\end{align*}
Again, it is natural to distinguish two cases.
\newline
{\bf Case 1:} $\mathbf {k\ne 2.}$ 
We have
\begin{align*}
\partial_{y_{k}}&h(x,y,\z)\\
=&
\int\e\b\Big[\frac{1}{2\z}(y_{k}+x_{k}s_{k})+\frac{\z}{2}(y_{k}-x_{k}s_{k})\Big]
\Big[\frac{1}{2\z}(x_{2}+y_{2}s_{2})+\frac{\z}{2}(x_{2}-y_{2}s_{2})\Big]\,\Pi_{\a+e_{1}}(ds)\\
&-
x_{2}\int\e\b\Big[\frac{1}{2\z}(y_{k}+x_{k}s_{k})+\frac{\z}{2}(y_{k}-x_{k}s_{k})\Big]
\,\Pi_{\a+e_{1}}(ds).
\end{align*}
Now it is not hard to check that (see the estimate of $\partial_{x_{k}}h(x,y,\z)$ above)
$$
|\partial_{y_{k}}h(x,y,\z)|\lesssim
\z^{-1}\int\big(\e\b\big)^{1\slash 4}\,\Pi_{\a+e_{1}}(ds).
$$
{\bf Case 2:} $\mathbf {k=2.}$ 
An elementary computation produces
\begin{align*}
\partial_{y_{2}}&h(x,y,\z)\\
=&
\int\e\b\Big[\frac{1}{2\z}(y_{2}+x_{2}s_{2})+\frac{\z}{2}(y_{2}-x_{2}s_{2})\Big]
\Big[\frac{1}{2\z}(x_{2}+y_{2}s_{2})+\frac{\z}{2}(x_{2}-y_{2}s_{2})\Big]\,\Pi_{\a+e_{1}}(ds)\\
&-
\int\e\b\Big[\frac{1}{2\z}s_{2}-\frac{\z}{2}s_{2}\Big]\,\Pi_{\a+e_{1}}(ds)\\
&-
x_{2}\int\e\b\Big[\frac{1}{2\z}(y_{2}+x_{2}s_{2})+\frac{\z}{2}(y_{2}-x_{2}s_{2})\Big]
\,\Pi_{\a+e_{1}}(ds).
\end{align*}
Parallel arguments to those from Case 1 lead to
$$
|\partial_{y_{2}}h(x,y,\z)|\lesssim
\z^{-1}\int\big(\e\b\big)^{1\slash 4}\,\Pi_{\a+e_{1}}(ds).
$$
This finishes proving the case of $g_{H,\widetilde T}^{j,i}$, $j\ne i$, in Theorem \ref{preg}.
\end {proof}

\begin {proof}[Proof of Theorem \ref{preg}; the case of $g_{H,\widetilde T}^{j,j}$.]
With no loss of generality we may focus only on the case $j=1$. Recall that $\delta_{1,x}^{*}=-\partial_{x_{1}}+x_{1}-\frac{2\a_{1}+1}{x_{1}}$. In view of \eqref{falka} and Lemma \ref{wch} we get
\begin{align}\label{row}
\delta_{1,x}^{*}\widetilde G_{t}^{\alpha,1}(x,y)=
y_{1}\frac{1-\z}{1+\z}\Big(\frac{1-\z^2}{2\z}\Big)^{d+|\a|+1}h(x,y,\z),
\end{align}
where the auxiliary function $h$ is now given by
\begin{align*}
h(x,y,\z)=&
-(2\a_{1}+2)\int\e\b\,\Pi_{\a+e_{1}}(ds)\\
&+
x_{1}\int\e\b\Big[\frac{1}{2\z}(x_{1}+y_{1}s_{1})+\frac{\z}{2}(x_{1}-y_{1}s_{1})\Big]\,\Pi_{\a+e_{1}}(ds)\\
&+
x_{1}^{2}\int\e\b\,\Pi_{\a+e_{1}}(ds).
\end{align*}
Using Lemma \ref{obs}, the fact that $x_{1}\leq \sqrt{q_{+}}+\sqrt{q_{-}}$, and then Lemma \ref{oq} (with $b=1\slash 2$, $A=\z^{-1}$ and $A=\z$, respectively) we obtain
\begin{align}
|h(x,y,\z)|\lesssim&
\int\e\b\,\Pi_{\a+e_{1}}(ds)
+
x_{1}\int\e\b\Big[\frac{1}{\z}\sqrt{q_{+}}+\z\sqrt{q_{-}}\Big]\,\Pi_{\a+e_{1}}(ds)\nonumber\\
&+
x_{1}\int(\sqrt{q_{+}}+\sqrt{q_{-}})\e\b\,\Pi_{\a+e_{1}}(ds)\nonumber\\
\lesssim&
\int\e\b\,\Pi_{\a+e_{1}}(ds)
+
\,x_{1}\z^{-1\slash 2}\int\big(\e\b\big)^{1\slash 2}\,\Pi_{\a+e_{1}}(ds).\label{nier2}
\end{align}
Applying this estimate we get
\begin{align*}
\big|\delta_{1,x}^{*}\widetilde G_{t}^{\alpha,1}(x,y)\big|
\lesssim&
\,y_{1}\Big(\frac{1-\z^2}{2\z}\Big)^{d+|\a|+1}\int\e\b\,\Pi_{\a+e_{1}}(ds)\\
&+
\,x_{1}y_{1}\Big(\frac{1-\z^2}{2\z}\Big)^{d+|\a|+1}\z^{-1\slash 2}\int\big(\e\b\big)^{1\slash 2}\,\Pi_{\a+e_{1}}(ds)\\
\equiv&
\,h_{1}(x,y,\z)+h_{2}(x,y,\z).
\end{align*}

We will treat $h_{1}$ and $h_{2}$ separately.
Changing the variable according to \eqref{zz} and then using the Minkowski inequality, Lemma \ref{oq} (taken with $b=2d+2|\alpha|$, $c=1\slash 2$, $A=\zeta^{-1}$) and Lemma \ref{lem5.4} (with $a=2$ and $T=\frac{1}{4}q_{+}$) we obtain
\begin{align*}
\|h_{1}(x,y,\z(t))\|_{L^2(dt)}=&
\bigg(\int_{0}^{1}\frac{1}{1-\z^2}y_{1}^{2}\Big(\frac{1-\z^2}{2\z}\Big)^{2d+2|\a|+2}
\bigg(\int\e\b\,\Pi_{\a+e_{1}}(ds)\bigg)^2\,d\z\bigg)^{1\slash 2}\\
\leq&
\,y_{1}\int\bigg(\int_{0}^{1}\Big(\frac{1}{\z}\Big)^{2d+2|\a|}
\Big(\frac{1}{\z}\Big)^{2}\big(\e\b\big)^{2}\,d\z\bigg)^{1\slash 2}\,\Pi_{\a+e_{1}}(ds)\\
\lesssim&
\,y_{1}\int(q_{+})^{-d-|\a|}\bigg(\int_{0}^{1}\Big(\frac{1}{\z}\Big)^{2}\e\b\,d\z\bigg)^{1\slash 2}\,\Pi_{\a+e_{1}}(ds)\\
\lesssim&
(x+y)^{e_{1}}\int(q_{+})^{-d-|\a|-1\slash 2}\,\Pi_{\a+e_{1}}(ds).
\end{align*}
Now the growth estimate for $h_{1}$ follows with the aid of Lemma \ref{lem4} (specified to $\delta=e_{1}\slash 2$ and $\kappa=e_{1}\slash 2$).
The growth condition for $h_{2}$ was proved implicitly earlier, see \eqref{nier} and the succeeding estimates. The growth estimate for $\big\{\delta_{1,x}^{*}\widetilde G_{t}^{\a,1}(x,y)\big\}$ follows.

To prove the smoothness conditions we first show that 
$$
\big\|\delta_{1,x}^{*}\widetilde G_{t}^{\alpha,1}(x,y)-\delta_{1,x}^{*}\widetilde G_{t}^{\alpha,1}(x',y)\big\|_{L^2(dt)}\lesssim
\frac{|x-x'|}{|x-y|} \;
\frac{1}{\mu_{\alpha}(B(x,|y-x|))},\qquad |x-y|>2|x-x'|.
$$
Define the auxiliary functions 
\begin{align*}
\v_{1}(x,y,\z)=&
-(2\a_{1}+2)\int\e\b\,\Pi_{\a+e_{1}}(ds),\\
\v_{2}(x,y,\z)=&
\int\e\b\Big[\frac{1}{2\z}(x_{1}+y_{1}s_{1})+\frac{\z}{2}(x_{1}-y_{1}s_{1})\Big]\,\Pi_{\a+e_{1}}(ds)\\
&+
x_{1}\int\e\b\,\Pi_{\a+e_{1}}(ds),
\end{align*}
so that (see \eqref{row})
\begin{align*}
\delta_{1,x}^{*}\widetilde G_{t}^{\alpha,1}(x,y)
=
y_{1}\frac{1-\z}{1+\z}\Big(\frac{1-\z^2}{2\z}\Big)^{d+|\a|+1}
\big(\v_{1}(x,y,\z)+x_{1}\v_{2}(x,y,\z)\big).
\end{align*}
By the mean value theorem
\begin{align*}
\big|\delta_{1,x}^{*}&\widetilde G_{t}^{\alpha,1}(x,y)-\delta_{1,x}^{*}\widetilde G_{t}^{\alpha,1}(x',y)\big|\\
\leq&
\,y_{1}\frac{1-\z}{1+\z}\Big(\frac{1-\z^2}{2\z}\Big)^{d+|\a|+1}|x-x'|
\Big(\big|\nabla_{\!x}\v_{1}(\t_{1},y,\z)\big|+|\v_{2}(x',y,\z)|+x_{1}\big|
\nabla_{\!x}\v_{2}(\t_{2},y,\z)\big|\Big),
\end{align*}
where $\t_{1}$, $\t_{2}$ are convex combinations of $x$ and $x'$. We now analyze each of the three terms above.
For any $i=1,\ldots,d$ we have 
\begin{align*}
\partial_{x_{i}}\v_{1}(x,y,\z)=
(2\a_{1}+2)\int\e\b\Big[\frac{1}{2\z}(x_{i}+y_{i}s_{i})+\frac{\z}{2}(x_{i}-y_{i}s_{i})\Big]
\,\Pi_{\a+e_{1}}(ds).
\end{align*}
Using Lemma \ref{obs} and then Lemma \ref{oq} (with $b=1\slash 2$, $A=\z^{-1}$ and $A=\z$, respectively) we get
\begin{align*}
|\partial_{x_{i}}\v_{1}(x,y,\z)|
\lesssim&
\int\e\b\Big[\frac{1}{\z}\sqrt{q_{+}}+\z\sqrt{ q_{-}}\Big]\,\Pi_{\a+e_{1}}(ds)\\
\lesssim&
\,\z^{-1\slash 2}\int\big(\e\b\big)^{1\slash 2}\,\Pi_{\a+e_{1}}(ds).
\end{align*}
This together with Lemma \ref{lemat} gives 
\begin{align}\label{oszac1}
|\nabla_{\!x}\v_{1}(\t_{1},y,\z)|
\lesssim&
\,\z^{-1\slash 2}\int\big[\e\bbb\big]^{1\slash 8}\,\Pi_{\a+e_{1}}(ds)
\end{align}
for $|x-y|>2|x-x'|$. Further, it is not hard to check that
\begin{align*}
|\v_{2}(x,y,\z)|
\lesssim
\z^{-1\slash 2}\int\big(\e\b\big)^{1\slash 2}\,\Pi_{\a+e_{1}}(ds),
\end{align*}
so using Lemma \ref{lemat} we obtain the same bound as before,
\begin{align}\label{oszac2}
|\v_{2}(x',y,\z)|
\lesssim
\z^{-1\slash 2}\int\big[\e\bbb\big]^{1\slash 8}\,\Pi_{\a+e_{1}}(ds),
\end{align}
provided that $|x-y|>2|x-x'|$. Finally, estimating $x_{1}\big|\nabla_{\!x}\v_{2}(\t_{2},y,\z)\big|$ requires proper bounds on $\partial_{x_{i}}\v_{2}(x,y,\z)$. It is convenient to distinguish two cases.
\newline
{\bf Case 1:} $\mathbf {i\ne 1.}$
By symmetry reasons, we may assume that $i=2$. Then
\begin{align*}
\partial_{x_{2}}&\v_{2}(x,y,\z)\\
=&
-
\int\e\b\Big[\frac{1}{2\z}(x_{2}+y_{2}s_{2})+\frac{\z}{2}(x_{2}-y_{2}s_{2})\Big]
\Big[\frac{1}{2\z}(x_{1}+y_{1}s_{1})+\frac{\z}{2}(x_{1}-y_{1}s_{1})\Big]\,\Pi_{\a+e_{1}}(ds)\\
&-
x_{1}\int\e\b\Big[\frac{1}{2\z}(x_{2}+y_{2}s_{2})+\frac{\z}{2}(x_{2}-y_{2}s_{2})\Big]
\,\Pi_{\a+e_{1}}(ds).
\end{align*}
Using Lemma \ref{obs}, the fact that $x_{1}\leq \sqrt{q_{+}}+\sqrt{q_{-}}$ and then Lemma \ref{oq} (with $b=1\slash 2$, $A=\z^{-1}$ and $A=\z$, respectively) we get
\begin{align*}
|\partial_{x_{2}}\v_{2}(x,y,\z)|
\lesssim&
\int\e\b\Big[\frac{1}{\z}\sqrt{q_{+}}+\z \sqrt{q_{-}}\Big]^{2}\,\Pi_{\a+e_{1}}(ds)\\
&+
\int(\sqrt{q_{+}}+\sqrt{q_{-}})\e\b\Big[\frac{1}{\z}\sqrt{q_{+}}+\z \sqrt{q_{-}}\Big]\,\Pi_{\a+e_{1}}(ds)\\
\lesssim&
\,\z^{-1}\int\big(\e\b\big)^{1\slash 4}\,\Pi_{\a+e_{1}}(ds).
\end{align*}
{\bf Case 2:} $\mathbf {i=1.}$
An elementary computation gives
\begin{align*}
\partial_{x_{1}}\v_{2}(x,y,\z)=&
-
\int\e\b\Big[\frac{1}{2\z}(x_{1}+y_{1}s_{1})+\frac{\z}{2}(x_{1}-y_{1}s_{1})\Big]^{2}
\,\Pi_{\a+e_{1}}(ds)\\
&+
\int\e\b\Big[\frac{1}{2\z}+\frac{\z}{2}\Big]\,\Pi_{\a+e_{1}}(ds)
+
\int\e\b\,\Pi_{\a+e_{1}}(ds)\\
&-
x_{1}\int\e\b\Big[\frac{1}{2\z}(x_{1}+y_{1}s_{1})+\frac{\z}{2}(x_{1}-y_{1}s_{1})\Big]
\,\Pi_{\a+e_{1}}(ds).
\end{align*}
Proceeding in a similar way as in Case 1 we obtain the same bound as before,
\begin{align*}
|\partial_{x_{1}}\v_{2}(x,y,\z)|
\lesssim&
\,\z^{-1}\int\big(\e\b\big)^{1\slash 4}\,\Pi_{\a+e_{1}}(ds).
\end{align*}
By the above estimates of $\partial_{x_{i}}\v_{2}(x,y,\z)$ and Lemma \ref{lemat} we see that
\begin{align}\label{oszac3}
|\nabla_{\!x}\v_{2}(\t_{2},y,\z)|
\lesssim&
\,\z^{-1}\int\big[\e\bbb\big]^{1\slash 16}\,\Pi_{\a+e_{1}}(ds)
\end{align}
for $|x-y|>2|x-x'|$. Taking into account \eqref{oszac1}, \eqref{oszac2} and \eqref{oszac3} we obtain, for $|x-y|>2|x-x'|$,
\begin{align*}
\big|\delta_{1,x}^{*}&\widetilde G_{t}^{\alpha,1}(x,y)-\delta_{1,x}^{*}\widetilde G_{t}^{\alpha,1}(x',y)\big|\\
\lesssim&
|x-x'|(x_{1}+y_{1})\sqrt{1-\z^2}\,\z^{-d-|\a|-3\slash 2}
\int\big(\e\b\big)^{1\slash 8}\,\Pi_{\a+e_{1}}(ds)\\
&+
|x-x'|\,x_{1}y_{1}\sqrt{1-\z^2}\,\z^{-d-|\a|-2}
\int\big(\e\b\big)^{1\slash 16}\,\Pi_{\a+e_{1}}(ds)\\
\equiv&|x-x'|\big(h_{3}(x,y,\z)+h_{4}(x,y,\z)\big).
\end{align*}
It this position, to prove the smoothness bound it suffices to show that
\begin{align*}
\|h_{k}(x,y,\z(t))\|_{L^2(dt)}
\lesssim
\frac{1}{|x-y|} \;
\frac{1}{\mu_{\alpha}(B(x,|y-x|))},\qquad k=3,4.
\end{align*}
This, however, was done implicitly earlier, see \eqref{nier5} with the succeeding estimates and \eqref{nier6} together with the succeeding estimates, respectively.

The proof will be completed once we verify the remaining smoothness bound
$$
\big\|\delta_{1,x}^{*}\widetilde G_{t}^{\alpha,1}(x,y)-\delta_{1,x}^{*}\widetilde G_{t}^{\alpha,1}(x,y')\big\|_{L^2(dt)}\lesssim
\frac{|y-y'|}{|x-y|} \;
\frac{1}{\mu_{\alpha}(B(x,|y-x|))},\qquad |x-y|>2|y-y'|.
$$
Define (see \eqref{row})
$$
\phi(x,y,\z)=\delta_{1,x}^{*}\widetilde G_{t}^{\alpha,1}(x,y)\slash y_{1}.
$$
By the mean value theorem
\begin{align*}
\big|\delta_{1,x}^{*}\widetilde G_{t}^{\alpha,1}(x,y)-\delta_{1,x}^{*}\widetilde G_{t}^{\alpha,1}(x,y')\big|
\leq&
|y-y'||\phi(x,y',\z)|+
y_{1}\big|
\nabla_{\!y}\phi(x,\t,\z)\big|,
\end{align*}
where $\t$ is a convex combination of $y$ and $y'$. We will treat separately each of the two terms appearing on the right-hand side above.
By \eqref{row} and in view of \eqref{nier2} and Lemma \ref{lemat} we have, for $|x-y|>2|y-y'|$,
\begin{align*}
|\phi(x,y',\z)|
\lesssim&
\Big(\frac{1-\z^2}{2\z}\Big)^{d+|\a|+1}\int\e\ddd\,\Pi_{\a+e_{1}}(ds)\\
&+
x_{1}\Big(\frac{1-\z^2}{2\z}\Big)^{d+|\a|+1}
\,\z^{-1\slash 2}\int\big[\e\ddd\big]^{1\slash 2}\,\Pi_{\a+e_{1}}(ds)\\
\leq&
\sqrt{1-\z^2}\,\z^{-d-|\a|-1}\int\big[\e\bbb\big]^{1\slash 4}\,\Pi_{\a+e_{1}}(ds)\\
&+
(x_{1}+y_{1})\sqrt{1-\z^2}\,\z^{-d-|\a|-3\slash 2}
\int\big[\e\bbb\big]^{1\slash 8}\,\Pi_{\a+e_{1}}(ds)\\
\equiv&
\,h_{5}(x,y,\z)+h_{3}(x,y,\z).
\end{align*}
Next, we claim that
\begin{align*}
\|h_{5}(x,y,\z(t))\|_{L^2(dt)}
\lesssim
\frac{1}{|x-y|} \;
\frac{1}{\mu_{\alpha}(B(x,|y-x|))};
\end{align*}
since the same bound for $h_{3}(x,y,\z)$ was obtained earlier, this will imply the relevant bound for $|y-y'||\phi(x,y',\z)|$.
Changing the variable as in \eqref{zz} and then using the Minkowski inequality, Lemma \ref{oq} (taken with $b=2d+2|\alpha|$, $c=1\slash 8$, $A=\zeta^{-1}$) and Lemma \ref{lem5.4} (with $a=2$ and $T=\frac{1}{16}q_{+}$) we get
\begin{align*}
\|h_{5}(x,y,\z(t))\|_{L^2(dt)}
=&
\bigg(\int_{0}^{1}\Big(\frac{1}{\z}\Big)^{2d+2|\a|+2}
\bigg(\int\big(\e\b\big)^{1\slash 4}
\,\Pi_{\a+e_{1}}(ds)\bigg)^2\,d\z\bigg)^{1\slash 2}\\
\leq&
\int\bigg(\int_{0}^{1}\Big(\frac{1}{\z}\Big)^{2d+2|\a|}
\Big(\frac{1}{\z}\Big)^{2}\big(\e\b\big)^{1\slash 2}\,d\z\bigg)^{1\slash 2}\,\Pi_{\a+e_{1}}(ds)\\
\lesssim&
\int(q_{+})^{-d-|\a|}
\bigg(\int_{0}^{1}\Big(\frac{1}{\z}\Big)^{2}\big(\e\b\big)^{1\slash 4}
\,d\z\bigg)^{1\slash 2}\,\Pi_{\a+e_{1}}(ds)\\
\lesssim&
\int(q_{+})^{-d-|\a|-1\slash 2}\,\Pi_{\a+e_{1}}(ds).
\end{align*}
Now the claim follows by applying Lemma \ref{lem4} (with $\delta=0$ and $\kappa=e_{1}$).
It remains to estimate $y_{1}\big|
\nabla_{\!y}\phi(x,\t,\z)\big|$, and to do that we first analyze $\partial_{y_{i}}\phi(x,y,\z)$.
We have (see \eqref{row})
$$
\partial_{y_{i}}\phi(x,y,\z)=
\frac{1-\z}{1+\z}\Big(\frac{1-\z^2}{2\z}\Big)^{d+|\a|+1}\partial_{y_{i}}h(x,y,\z).
$$
While treating $\partial_{y_{i}}h(x,y,\z)$ we distinguish two cases.
\newline
{\bf Case 1:} $\mathbf {i\ne 1.}$
Without any loss of generality we may restrict to $i=2$. Then
\begin{align*}
\partial_{y_{2}}&h(x,y,\z)\\
=&
(2\a_{1}+2)\int\e\b\Big[\frac{1}{2\z}(y_{2}+x_{2}s_{2})+\frac{\z}{2}(y_{2}-x_{2}s_{2})\Big]
\,\Pi_{\a+e_{1}}(ds)\\
&-
x_{1}\int\e\b\Big[\frac{1}{2\z}(y_{2}+x_{2}s_{2})+\frac{\z}{2}(y_{2}-x_{2}s_{2})\Big]
\Big[\frac{1}{2\z}(x_{1}+y_{1}s_{1})+\frac{\z}{2}(x_{1}-y_{1}s_{1})\Big]\,\Pi_{\a+e_{1}}(ds)\\
&-
x_{1}^{2}\int\e\b\Big[\frac{1}{2\z}(y_{2}+x_{2}s_{2})+\frac{\z}{2}(y_{2}-x_{2}s_{2})\Big]
\,\Pi_{\a+e_{1}}(ds).
\end{align*}
Proceeding as before (see the estimate of $\partial_{x_{2}}\v_{2}(x,y,\z)$ above) we get
\begin{align*}
|\partial_{y_{2}}h(x,y,\z)|\lesssim&
\,\z^{-1\slash 2}\int\big(\e(\z,q_{\pm})\big)^{1\slash 2}\,\Pi_{\a+e_{1}}(ds)
+
x_{1}\z^{-1}\int\big(\e(\z,q_{\pm})\big)^{1\slash 4}\,\Pi_{\a+e_{1}}(ds).
\end{align*}
{\bf Case 2:} $\mathbf {i=1.}$
An elementary computation produces
\begin{align*}
\partial_{y_{1}}&h(x,y,\z)\\
=&
(2\a_{1}+2)\int\e\b\Big[\frac{1}{2\z}(y_{1}+x_{1}s_{1})+\frac{\z}{2}(y_{1}-x_{1}s_{1})\Big]
\,\Pi_{\a+e_{1}}(ds)\\
&-
x_{1}\int\e\b\Big[\frac{1}{2\z}(y_{1}+x_{1}s_{1})+\frac{\z}{2}(y_{1}-x_{1}s_{1})\Big]
\Big[\frac{1}{2\z}(x_{1}+y_{1}s_{1})+\frac{\z}{2}(x_{1}-y_{1}s_{1})\Big]
\,\Pi_{\a+e_{1}}(ds)\\
&+
x_{1}\int\e\b\Big[\frac{1}{2\z}s_{1}-\frac{\z}{2}s_{1}\Big]\,\Pi_{\a+e_{1}}(ds)\\
&-
x_{1}^{2}\int\e\b\Big[\frac{1}{2\z}(y_{1}+x_{1}s_{1})+\frac{\z}{2}(y_{1}-x_{1}s_{1})\Big]
\,\Pi_{\a+e_{1}}(ds).
\end{align*}
Parallel arguments to those used in Case 1 lead to the same bound as before,
\begin{align*}
|\partial_{y_{1}}h(x,y,\z)|\lesssim&
\,\z^{-1\slash 2}\int\big(\e\b\big)^{1\slash 2}\,\Pi_{\a+e_{1}}(ds)
+
x_{1}\z^{-1}\int\big(\e\b\big)^{1\slash 4}\,\Pi_{\a+e_{1}}(ds).
\end{align*}
Combining the above estimates of $\partial_{y_{i}}h(x,y,\z)$ with Lemma \ref{lemat} we obtain, for $|x-y|>2|y-y'|$,
\begin{align*}
y_{1}\big|\partial_{y_{i}}\phi(x,\t,\z)\big|
\lesssim&
(x_{1}+y_{1})\sqrt{1-\z^2}\,\z^{-d-|\a|-3\slash 2}
\int\big[\e\bbb\big]^{1\slash 8}\,\Pi_{\a+e_{1}}(ds)\\
&+
x_{1}y_{1}\sqrt{1-\z^2}\,\z^{-d-|\a|-2}
\int\big[\e\bbb\big]^{1\slash 16}\,\Pi_{\a+e_{1}}(ds)\\
=&
\,h_{3}(x,y,\z)+h_{4}(x,y,\z).
\end{align*}
Finally, the estimates of $h_{3}(x,y,\z)$ and $h_{4}(x,y,\z)$ obtained earlier in this proof lead to the required bound for $y_{1}\big|\nabla_{\!y}\phi(x,\t,\z)\big|$. Now both smoothness conditions for $\big\{\delta_{1,x}^{*}\widetilde G_{t}^{\a,1}(x,y)\big\}$ are justified.

The proof of the case of $g_{H,\widetilde T}^{j,j}$ in Theorem \ref{preg} is complete.
\end {proof}
\subsection{$g$-functions based on  $\mathbf{\{\widetilde P_{t}^{\alpha,j}\}}$}\label{k6}
$\newline$
Proving Theorem \ref{preg} in the cases of $g_{V,\widetilde P}^{j}$ and $g_{H,\widetilde P}^{j,i}$ relies on the subordination principle and the kernel estimates already obtained in Sections \ref{k4} and \ref{k5}. The details are completely analogous to those in Section \ref{k3} and thus omitted.


\end{document}